\newtheorem{theorem}{Theorem}[section]
\newtheorem{lemma}{Lemma}[section]
\newtheorem{corollary}{Corollary}[section]
\newtheorem{remark}{Remark}[section]
\newtheorem{proposition}{Proposition}[section]
\numberwithin{equation}{section}
\begin{document}

\title[A convection-dominated transport problem in a thin graph-like junction]
{Asymptotic expansion for   convection-dominated transport in a thin graph-like junction}
\author[Taras Mel'nyk \& Christian Rohde]{Taras Mel'nyk$^\flat$ \ \& \ Christian Rohde$^\natural$}
\address{\hskip-12pt  $^\flat$ Department of Mathematical Physics, Faculty of Mathematics and Mechanics\\
Taras Shevchenko National University of Kyiv\\
Volodymyrska str. 64,\ 01601 Kyiv,  \ Ukraine
}
\email{melnyk@imath.kiev.ua}

\address{\hskip-12pt  $^\natural$ Institute of Applied Analysis and Numerical Simulation,
Faculty of Mathematics and Physics, Suttgart University\\
Pfaffenwaldring 57,\ 70569 Suttgart,  \ Germany
}
\email{christian.rohde@mathematik.uni-stuttgart.de }

\begin{abstract}
We consider  for a small parameter $\varepsilon >0$  a parabolic  convection-diffusion problem with
P\'eclet number  of  order $\mathcal{O}(\varepsilon^{-1})$  in a three-dimensional graph-like junction consisting  of  thin curvilinear cylinders with radii of order $\mathcal{O}(\varepsilon)$ connected through a domain (node) of diameter $\mathcal{O}(\varepsilon).$  Inhomogeneous  Neumann type boundary conditions, that involve convective and diffusive fluxes, are prescribed both on the lateral surfaces of the thin cylinders and the boundary of the node.

The asymptotic behaviour of  the solution is studied as $\varepsilon \to 0,$ i.e., when  the diffusion coefficients are eliminated and the thin junction is shrunk into a three-part graph connected in a single vertex. A rigorous procedure for the construction of the complete asymptotic expansion of the solution is developed
and the corresponding energetic and uniform pointwise estimates are proven. Depending on the directions of the limit convective fluxes, the corresponding  limit problems
$(\varepsilon = 0)$  are derived in the form of  first-order hyperbolic differential equations on the one-dimensional branches with novel  gluing conditions  at the vertex.  These generalize the classical Kirchhoff transmission conditions and might require the solution of a three-dimensional cell-like problem associated with the vertex to account for the local geometric inhomogeneity of the node and the physical processes in the node.
The asymptotic ansatz consists of three parts, namely, the regular part, node-layer part, and boundary-layer one. Their coefficients are classical solutions to mixed-dimensional limit problems. The existence and other properties of those solutions are analyzed.
\end{abstract}

\keywords{Asymptotic expansion, convection-diffusion problem, asymptotic estimate, thin graph-like junction, hyperbolic limit model.
\\
\hspace*{9pt} {\it MOS subject classification:} \   35K20,  35R02,  35B40, 35B25, 	35B45, 35K57, 	35Q49
}

\maketitle
\tableofcontents
\section{Introduction}\label{Sect1}
Convection-diffusion problems are ubiquitous in various, seemingly different applications: these include effective species transport  in heterogeneous or porous  media,
 modeling semiconductor devices by  drift-diffusion equations,  the  Black--Scholes dynamics  from financial modeling, heat transfer processes, image processing in computer graphics and medical applications, or  material crystallizations (see e.g.~\cite{Mor96,Stynes-2018} and references therein).

Especially, when modeling transport phenomena in a continuous medium, much attention of researchers has been attracted by problems with the predominance of convection terms over diffusion terms \cite{Joh_Kno_Nov_2018,Stynes-2018}.
In this case, the P\'eclet number as the ratio of  convective to  diffusive transport rates  is large. In the corresponding transport model this regime is controlled by a small parameter $\varepsilon>0$, scaling the  diffusion operator. From a mathematical and numerical point of view, such problems pose severe challenges. Therefore, much research existing in the literature is limited to the study of  steady-state problems, and even in that case, the results are not complete \cite{Joh_Kno_Nov_2018,Stynes-2018}. Notably, the  derivation  of  $\varepsilon$-independent a-priori estimates   and  corresponding error estimates  is still a widely-open topic, in particular for the numerical analysis.

In this paper, we are interested in the asymptotic analysis (as $\varepsilon \to 0)$ of  the linear initial-boundary value problem  in three spatial dimensions for the parabolic equation
\begin{equation}\label{in_1}
  \partial_t u_\varepsilon -  \varepsilon\, \mathrm{div}_x\big( \mathbb{D}_\varepsilon(x) \nabla_x u_\varepsilon\big) +
  \mathrm{div}_x\big( \overrightarrow{V_\varepsilon}(x) \, u_\varepsilon\big) = 0.
\end{equation}
In \eqref{in_1}, the function  $u_\varepsilon$  denotes  the unknown concentration of some transported species.  Thereby $\overrightarrow{V_\varepsilon}$ is the given
convective  velocity field,   and    the  matrix $\mathbb{D}_\varepsilon$  governs the  diffusion  process.  They depend in a special way on the small parameter $\varepsilon$ to account for small-scale heterogeneities and for the  geometry  of  the considered domain:
we consider the  problem \eqref{in_1} in a  three-dimensional graph-like junction, consisting  of  three thin curvilinear cylinders of diameter $\mathcal{O}(\varepsilon)$ that are connected through a domain (node)  (see Fig.~\ref{f3}). In each cylinder, the diffusive and the convective transport depend on a fast, small-scale variable and  are assumed to be of  first  order with respect to $\varepsilon$ in the lateral directions.  (see Sect.~\ref{Sec:Statement}, Assumptions \textbf{A1, A2} for the exact setting).
Appropriate initial conditions and  the inhomogeneous   flux boundary conditions
\begin{equation}\label{in_2}
\big(-  \varepsilon \,  \mathbb{D}_\varepsilon(x)\nabla u_\varepsilon +  u_\varepsilon \, \overrightarrow{V_\varepsilon}(x)\big) \cdot \boldsymbol{\nu}_\varepsilon   =  \varepsilon \, \varphi^{(i)}_\varepsilon(x,t)
\end{equation}
on the lateral surfaces  of the thin cylinders (the index $i$ indicates the corresponding cylinder), and
\begin{equation}\label{in_2+}
\big(-  \varepsilon \,  \mathbb{D}_\varepsilon(x)\nabla_x u_\varepsilon   
\big) \cdot \boldsymbol{\nu}_\varepsilon   =  \varphi^{(0)}_\varepsilon(x,t)
\end{equation}
on the node surface (identified by the index $0$)
close the problem.  Note that the boundary condition for the node is of pure Neumann type, see also Remark  \ref{rem_node}
in Section \ref{Sec:Statement}.

This  thin three-dimensional junction shrunks into a three-part graph (see Fig.~\ref{f3})
and the elliptic part of the differential equation \eqref{in_1}  disappears as the parameter $\varepsilon$ tends to zero.
Thus, for some  limit longitudinal velocity component denoted by  $v^{(i)}$
one expects  that the limit concentration   $ w^{(i)}$  satisfies the  first-order hyperbolic differential equation
\begin{equation}\label{in_3}
\partial_t w ^{(i)}+ \partial_{x_i} \big(    v^{(i)}(x_i)  w^{(i)}\big) = 0
\end{equation}
on the $i$th edge  of the graph, $i\in \{1,2,3\}$. The limit concentration laws \eqref{in_3}  are coupled  with some gluing condition at the vertex.

 How to find these limit equations, how to find the correct matching conditions at the vertex, how to justify all this, and how to characterize  the effect of the  surface interactions \eqref{in_2}, and especially the condition  \eqref{in_2+} at the node boundary,  for the  wellposedness of the  transport process in the entire thin graph-like junction?  These are the main questions that will be answered in this article.
\newline

 In various fields, first-order (typically nonlinear) hyperbolic equations on networks  like \eqref{in_3}  have  gained interest  and led to a wide range of  literature in recent years (see, e.g., \cite[Chapt.~3]{Bay_Mon_Gar_Goa_Pic_2022},\cite{Gar_Piccoli_2009,Bre_Ngu,Mar_Piccoli,Mus_Fjo_Ris_2022} for traffic models and \cite{Gug_Her_2022} for models of gas flow,  and the references therein).
As was noted in \cite{Gar_Piccoli_2009},  the dynamics at a vertex is not uniquely determined by imposing the conservation of mass through it and
to fully describe the evolution of the process being modeled over the entire graph, the first step is to define the concept of the solution at the vertex. A lot of  different additional conditions at the vertex have been proposed, especially  flux conditions for the  traffic flow models  on networks  as mentioned above. 
Some researchers used the vanishing viscosity method to prove the existence of conservation laws on graphs,  mainly for the Cauchy problem on unbounded graphs. Namely,  they add to each conservation equation  a small viscosity $\nu \, \partial_{x_i x_i} w_i$ and consider the corresponding parabolic Cauchy problem on the graph, which has a unique solution, and then prove that at least for a subsequence $(\nu_k \to 0)$   the solutions of the parabolic problem converge to a solution of the conservation law problem in some sense under special conditions for the flux  (see e.g. \cite{Coc_Gar_Viscosity}).

In this contribution we take a different point of view. The main reason why one-dimensional models on graphs do not fully describe the dynamics  
at a vertex of a graph is the inherently three-dimensional character of  the processes.   A lot of information and features are lost when the three-dimensional node shrinks into the vertex of a graph.

 Our approach allows us to take into account all these features in the limit (as $\varepsilon \to 0)$ from a three-dimensional model to the corresponding one-dimensional model on a graph.
A rigorous procedure for constructing and justifying the complete asymptotic expansion of the solution has been developed, which is the main novelty of this article. Compared to \cite{Mel-Klev_AA-2022}, where an elliptic problem with a predominance of convection in a thin graph-like junction was considered, there are significant generalizations and improvements in this paper. Namely, we have waived special assumptions for the vector field $\overrightarrow{V_\varepsilon};$ the thin cylinders formed a junction can be now curvilinear; inhomogeneous boundary conditions on the node boundary are considered and its influence is studied on the asymptotic behaviour of the solution.
All this became possible thanks to the proof of the maximum principle (Lemma~\ref{the maximum principle})  for solutions of
convection-dominated parabolic problems in thin graph-like junctions.

Studies of various physical and biological processes in thin channels and connections are relevant for many areas of natural science (see, e.g.,  \cite{Pan_2005,Post-2012}). A review of works related to problems of diffusion-convection in thin tubular structures is given in \cite{Mel-Klev_AA-2022}. Here we mention the works \cite{Ben_Paz_2016,Bun_Gau_2022,Mar_2019} that we discovered while working on this article.  The steady Bingham flow was studied in a two-dimensional thin $Y$-like shaped structure and  three uncoupled problems were obtained in the limit in \cite{Bun_Gau_2022}; in the other ones, the incompressible stationary fluids flowing through multiple pipe systems were studied via asymptotic analysis. Another related scenario occurs for   thin  debris-filled wellbores or when   modelling effective surface roughness in any kind of tubes and channels with varying aperture (see e.g. \cite{BurbullaRohde22,BurbullaHoerlRohde22} for such models). Parabolic problems with a predominance of convection in thin graph-shaped junctions, as far as we know, have not been studied at all.\\


After the problem statement in Section ~\ref{Sec:Statement}, the paper is structured as follows.  Section~\ref{Sec:expansions} deals with the construction  of a formal asymptotic expansion for the solution to the problem \eqref{probl}. It consists of three parts: the regular part located inside each thin cylinder (\S~\ref{regular_asymptotic}), the boundary-layer part located near the bases of some thin cylinders (\S~\ref{subsec_Bound_layer}), and the inner part discovered in a neighborhood of
the node (\S~\ref{subsec_Inner_part}).  The terms of the node-layer part of the asymptotics  are special solutions to boundary value problems in an unbounded domain with different outlets at infinity; in fact, they describe the  dynamics in the node. It turns out that such solutions have polynomial growth at infinity. Writing down the solvability conditions for such problems, transmission conditions are derived at the vertex of the graph for terms of the regular expansion. In particular, the limit problem on the graph is obtained. It depends on the dynamics of the field $\overrightarrow{V_\varepsilon}$ (see \S~\ref{sub_limit_problem} and  \S~\ref{sub_limit_problem+}, where the representation for the solution is given as well). In Section~\ref{Sec:justification}, we construct a complete asymptotic expansion  in the whole thin graph-like junction  and  calculate residuals that  its partial sum leaves in the problem \eqref{probl}. The maximum principle for the solution, as well as the $L^2$-estimate for its gradient are proved in Section~\ref{A priori estimates}. Our main result is Theorem \ref{Th_1}  that justifies  the constructed asymptotic expansion  and  provides the main asymptotic estimates. In the final section, the obtained results are discussed. In particular,  we outline the
fundamentally different  limit problems that occur for the two different flow regimes (two inlets, one outlet or  (one inlet,  two outlets) and relate them to the aforementioned studies on pure network models.
Prospects for further research are given.


\section{Problem statement}\label{Sec:Statement}
First,  for some small parameter $\varepsilon >0$, we describe a prototype thin graph-like junction  $\Omega_\varepsilon$.  It  consists of three thin  cylinders
$$
\Omega_\varepsilon^{(i)} =
  \Bigg\{
  x=(x_1, x_2, x_3)\in\Bbb{R}^3 \ : \
  \varepsilon \,  \ell_0  <x_i<\ell_i, \quad
  \sum\limits_{j=1}^3 (1-\delta_{ij})x_j^2<\varepsilon^2 h_i^2(x_i)
  \Bigg\}, \quad i=1,2,3,
$$
that are joined through a domain $\Omega_\varepsilon^{(0)}$ (referred to  as the "node"). Let   $\ell_0\in(0, \frac13), \ \ell_i\geq1, \   i\in\{1,2,3\}$ be given.
 The aperture function $h_i=h_i(x_i), \ x_i \in [0, \ell_i],$ is positive and  belongs to the space $C^3 ([0, \ell_i]).$ We assume that it is  equal to some constants in neighborhoods of the points $x=0$ and $x_i= \ell_i$ $(i\in\{1,2,3\})$.
The symbol $\delta_{ij}$ denotes  the Kronecker delta, i.e.,
$\delta_{ii} = 1$ and $\delta_{ij} = 0$ if $i \neq j.$

We denote the lateral surface of the thin cylinder $\Omega_\varepsilon^{(i)}$ by
$$
{\Gamma_\varepsilon^{(i)}} := \partial\Omega_\varepsilon^{(i)} \cap \{ x\in\Bbb{R}^3 \ : \ \varepsilon \ell_0<x_i<\ell_i \}
$$
and by
$$
\Upsilon_\varepsilon^{(i)} (y_i) := \Omega_\varepsilon^{(i)} \cap
\big\{  x\in\Bbb{R}^3 \, : \ x_i= y_i\big\}
$$
its cross-section at the point $y_i \in [ \varepsilon \ell_0, \ell_i].$
\begin{figure}[htbp]
\begin{center}
\includegraphics[width=5cm]{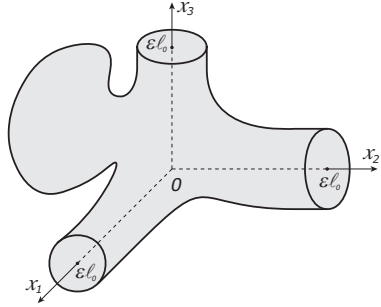}
\end{center}
\caption{The node $\Omega_\varepsilon^{(0)}$}\label{f2}
\end{figure}

The node $\Omega_\varepsilon^{(0)}$ (see Fig.~\ref{f2}) is formed by the homothetic transformation with coefficient $\varepsilon$ from a bounded domain $\Xi^{(0)}\subset \Bbb R^3$ containing the origin,  i.e.,
$
\Omega_\varepsilon^{(0)} = \varepsilon\, \Xi^{(0)}.
$
In addition, we assume that the boundary $\partial  \Xi^{(0)}$ of $\Xi^{(0)}$ contains the disks
$\Upsilon^{(i)}_1(\ell_0) := \overline{\Xi^{(0)}} \cap
\big\{ x\colon x_i= \ell_0\big\}, \   i\in\{1,2,3\},$ that are the bases of some right cylinders, respectively, and the lateral surfaces of these cylinders belong to $\partial  \Xi^{(0)}.$ Thus, the boundary of the node $\Omega_\varepsilon^{(0)}$ consists of
the disks $\Upsilon_\varepsilon^{(i)} (\varepsilon\ell_0), \   i\in\{1,2,3\},$
and the surface
$$
\Gamma_\varepsilon^{(0)} :=
\partial\Omega_\varepsilon^{(0)} \backslash
\left\{
 \overline{\Upsilon_\varepsilon^{(1)} (\varepsilon \ell_0)} \cup
 \overline{\Upsilon_\varepsilon^{(2)} (\varepsilon \ell_0)} \cup
 \overline{\Upsilon_\varepsilon^{(3)} (\varepsilon \ell_0)}
\right\}.
$$

Hence,  the model thin graph-like junction  $\Omega_\varepsilon$  (see Fig.~\ref{f3})
is   the interior of the union
$
\bigcup_{i=0}^{3}\overline{\Omega_\varepsilon^{(i)}}
$
and
the surface $\partial\Omega_\varepsilon\setminus \bigcup_{i=0}^{3}\overline{\Upsilon_\varepsilon^{(i)} (\ell_i)}$ is smooth of the class $C^3.$

\begin{figure}[htbp]
\begin{center}
\includegraphics[width=8cm]{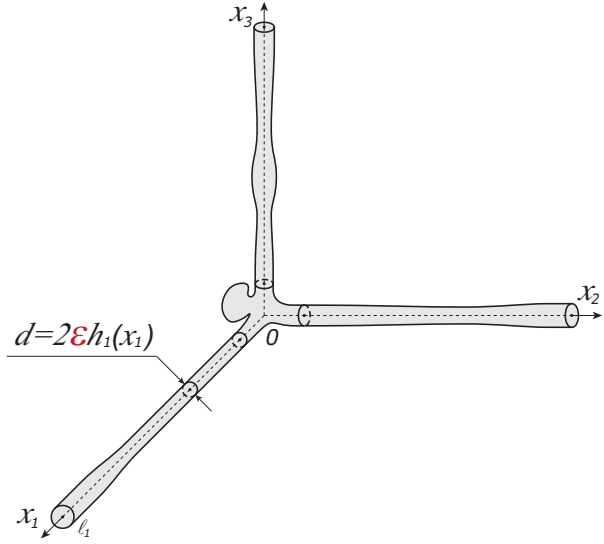}
\end{center}
\caption{The model thin graph-like  junction $\Omega_\varepsilon$}\label{f3}
\end{figure}

In $\Omega_\varepsilon,$ we consider the following parabolic convection-diffusion  problem:
\begin{equation}\label{probl}
\left\{\begin{array}{rcll}
 \partial_t u_\varepsilon -  \varepsilon\, \mathrm{div}_x \big( \mathbb{D}_\varepsilon(x) \nabla_x u_\varepsilon\big) +
  \mathrm{div}_x \big( \overrightarrow{V_\varepsilon}(x) \, u_\varepsilon\big)
  & = & 0, &
    \text{in} \ \Omega_\varepsilon \times (0, T),
\\[2mm]
\big(-  \varepsilon \,  \mathbb{D}_\varepsilon\nabla_x u_\varepsilon +  u_\varepsilon \, \overrightarrow{V_\varepsilon}\big) \cdot \boldsymbol{\nu}_\varepsilon &  = & \varepsilon \varphi^{(i)}_\varepsilon &
   \text{on} \ \Gamma^{(i)}_\varepsilon \times (0, T), \ \ i \in \{1, 2, 3\},
\\[2mm]
\big(-  \varepsilon \,  \mathbb{D}_\varepsilon\nabla_x u_\varepsilon +  u_\varepsilon \, \overrightarrow{V_\varepsilon}\big) \cdot \boldsymbol{\nu}_\varepsilon &  = &  \varphi^{(0)}_\varepsilon &
   \text{on} \ \Gamma^{(0)}_\varepsilon \times (0, T),
\\[2mm]
 u_\varepsilon \big|_{x_i= \ell_i}
 & = & q_i, & \text{on} \ \Upsilon_{\varepsilon}^{(i)} (\ell_i)\times (0, T),
\\[2mm]
u_\varepsilon\big|_{t=0}&=& 0, & \text{on} \ \Omega_{\varepsilon},
 \end{array}\right.
\end{equation}
where ${\boldsymbol{\nu}}_\varepsilon$ is the outward unit normal to $\partial \Omega_\varepsilon.$

{\bf A1. \ Assumptions for the diffusion matrix.}  It has the following structure:
$$
\mathbb{D}_\varepsilon(x) =
\left(
\begin{matrix}
  a^{(0)}_{11}(\frac{x}{\varepsilon}) & a^{(0)}_{12}(\frac{x}{\varepsilon}) & a^{(0)}_{13}(\frac{x}{\varepsilon}) \\[2mm]
  a^{(0)}_{21}(\frac{x}{\varepsilon}) & a^{(0)}_{22}(\frac{x}{\varepsilon}) & a^{(0)}_{23}(\frac{x}{\varepsilon}) \\[2mm]
  a^{(0)}_{31}(\frac{x}{\varepsilon}) & a^{(0)}_{32}(\frac{x}{\varepsilon}) & a^{(0)}_{33}(\frac{x}{\varepsilon})
\end{matrix}
\right)  =: \mathbb{D}_\varepsilon^{(0)}(x), \qquad x \in \ \Omega_\varepsilon^{(0)},
$$
$$
\mathbb{D}_\varepsilon(x)  =
\left(
\begin{matrix}
  a^{(1)}_{11} & 0 & 0 \\[2mm]
  0 & a^{(1)}_{22}(\frac{\overline{x}_1}{\varepsilon}) &  a^{(1)}_{23}(\frac{\overline{x}_1}{\varepsilon}) \\[2mm]
  0 &  a^{(1)}_{32}(\frac{\overline{x}_1}{\varepsilon}) &  a^{(1)}_{33}(\frac{\overline{x}_1}{\varepsilon})
\end{matrix}
\right)
 =:  \mathbb{D}_\varepsilon^{(1)}(x), \qquad x \in \ \Omega_\varepsilon^{(1)},
$$
$$
\mathbb{D}_\varepsilon(x)  =
\left(
\begin{matrix}
  a^{(2)}_{11}(\frac{\overline{x}_2}{\varepsilon}) & 0 & a^{(2)}_{13}(\frac{\overline{x}_2}{\varepsilon}) \\[2mm]
  0 & a^{(2)}_{22} & 0 \\[2mm]
  a^{(2)}_{31}(\frac{\overline{x}_2}{\varepsilon}) & 0 & a^{(2)}_{33}(\frac{\overline{x}_2}{\varepsilon})
\end{matrix}
\right)
 =:  \mathbb{D}_\varepsilon^{(2)}(x), \qquad x \in \ \Omega_\varepsilon^{(2)},
$$
$$
\mathbb{D}_\varepsilon(x) =
\left(
\begin{matrix}
  a^{(3)}_{11}(\frac{\overline{x}_3}{\varepsilon}) & a^{(3)}_{12}(\frac{\overline{x}_3}{\varepsilon}) & 0 \\[2mm]
  a^{(3)}_{21}(\frac{\overline{x}_3}{\varepsilon}) & a^{(3)}_{22}(\frac{\overline{x}_3}{\varepsilon}) & 0 \\[2mm]
  0 & 0 & a^{(3)}_{33}
\end{matrix}
\right)
  =:  \mathbb{D}_\varepsilon^{(3)}(x), \qquad x \in \ \Omega_\varepsilon^{(3)},
$$
where
$$
\overline{x}_i =
\left\{\begin{array}{lr}
(x_2, x_3), &\text{if} \ \ i=1, \\
(x_1, x_3), &\text{if} \ \ i=2, \\
(x_1, x_2), & \text{if} \ \ i=3.
\end{array}\right.
$$
This dependence of the matrix components on the variables $x$ and the parameter $\varepsilon$ makes it possible to take into account the heterogeneous physical structure of both thin cylinders and the node.

In addition, without loss of generality, we assume that  the components of the diffusion matrix $\mathbb{D}_1(x)$ belong to the
H\!$\mathrm{\ddot{o}}$\!lder space $C^{1,\alpha}\big(\overline{\mho}_1\big),$ where   $\mho_1$ is a domain that includes the junction $\Omega_\varepsilon$
for all $\varepsilon \in (0, 1).$  This assumption means that
$$
\mathbb{D}_\varepsilon^{(0)}(x)\big|_{x_i = \varepsilon \, \ell_0} = \mathbb{D}_\varepsilon^{(i)}(x)\big|_{x_i = \varepsilon \, \ell_0}, \quad i\in\{1, 2, 3\}.
$$

All matrices $\{\mathbb{D}_\varepsilon^{(i)}\}_{i=0}^3$ are symmetric, i.e., $a^{(i)}_{mn} = a^{(i)}_{nm},$ and   there exist positive constants $\kappa_0^{(i)}$ and $ \kappa_1^{(i)}$  such that  for all $\varepsilon \in (0, 1),$ $x \in \Omega_\varepsilon,$
and $y \in \Bbb R^3$
\begin{equation}\label{n1}
  \kappa_0^{(i)} \, |y|^2 \le \sum_{m,n=1}^{3} a^{(i)}_{mn}(x,\varepsilon) y_m y_n \le \kappa_1^{(i)} \, |y|^2, \quad i\in\{0, 1, 2, 3\}.
\end{equation}
From  \eqref{n1} it follows that the constants   $\{a^{(i)}_{ii}\}_{i=1}^3$ are positive.

\smallskip

{\bf A2. \ Assumptions for the vector field.}
The vector-valued function $\overrightarrow{V_\varepsilon}$  depends also on the parts of the thin junction $\Omega_\varepsilon,$ namely,
$$
\overrightarrow{V_\varepsilon}(x)=
\left(v^{(0)}_1(\tfrac{x}{\varepsilon}),  \  v^{(0)}_2(\tfrac{x}{\varepsilon}), \  v^{(0)}_3(\tfrac{x}{\varepsilon})
\right) =: \overrightarrow{V_\varepsilon}^{(0)}(x), \qquad x \in \ \Omega_\varepsilon^{(0)},
$$
$$
\overrightarrow{V_\varepsilon}(x)=
\left(v^{(1)}_1(x_1), \ \varepsilon\,  v^{(1)}_2(x_1, \tfrac{\overline{x}_1}{\varepsilon}), \
 \varepsilon \, v^{(1)}_3(x_1,\tfrac{\overline{x}_1}{\varepsilon})
\right) =: \overrightarrow{V_\varepsilon}^{(1)}(x), \qquad x \in \ \Omega_\varepsilon^{(1)} \quad (v^{(1)}_1 < 0),
$$
$$
\overrightarrow{V_\varepsilon}(x) =
\left( \varepsilon\,  v^{(2)}_1(x_2, \tfrac{\overline{x}_2}{\varepsilon}), \   v^{(2)}_2(x_2),  \
 \varepsilon \, v^{(2)}_3(x_2,\tfrac{\overline{x}_2}{\varepsilon})
\right) =: \overrightarrow{V_\varepsilon}^{(2)}(x), \qquad x \in \ \Omega_\varepsilon^{(2)} \quad (v^{(2)}_2 <  0 \ \  \text{or} \ \ v^{(2)}_2 > 0),
$$
$$
\overrightarrow{V_\varepsilon}(x) =
\left(\varepsilon \,  v^{(3)}_1(x_3, \tfrac{\overline{x}_3}{\varepsilon}), \
    \varepsilon \, v^{(3)}_2(x_3,\tfrac{\overline{x}_3}{\varepsilon}), \ v^{(3)}_3(x_3)
\right) =: \overrightarrow{V_\varepsilon}^{(3)}(x), \qquad x \in \ \Omega_\varepsilon^{(3)} \quad (v^{(3)}_3 >  0).
$$
We see that the main directions (in magnitude) of the vector-valued function $\overrightarrow{V_\varepsilon}^{(i)}$ is routed along the axes of the thin cylinder $\Omega_\varepsilon^{(i)}.$

All components of the vector-valued function $\overrightarrow{V_1}$ belong  also to the space $C^{1,\alpha}\big(\overline{\mho}_1\big).$
In addition, for each $i\in\{1,2,3\}$ the function $v^{(i)}_i(x_i),$ defined on the segment $[0, \ell_i],$ is equal to a constant $\mathrm{v}_i$ in
a neighbourhood  of the origin and the other components of $\overrightarrow{V_\varepsilon}^{(i)}$
have compact supports with respect to the corresponding longitudinal  variable $x_i$ in $\Omega^{(i)}_\varepsilon.$
This means that
\begin{gather}
\overrightarrow{V_\varepsilon}^{(0)}\big|_{x_1=\varepsilon \, \ell_0-0} = \overrightarrow{V_\varepsilon}^{(1)}\big|_{x_1=\varepsilon \, \ell_0+0} = \left(
\mathrm{v}_1, \, 0,\, 0\right) \ \  \text{on} \ \Upsilon_\varepsilon^{(1)} (\varepsilon\ell_0), \notag
\\
\overrightarrow{V_\varepsilon}^{(0)}\big|_{x_2=\varepsilon \, \ell_0-0} = \overrightarrow{V_\varepsilon}^{(2)}\big|_{x_2=\varepsilon \, \ell_0+0} = \left(0,\, \mathrm{v}_2, \, 0\right) \ \  \text{on} \ \Upsilon_\varepsilon^{(2)} (\varepsilon\ell_0), \notag
 \\ \label{v_i}
\overrightarrow{V_\varepsilon}^{(0)}\big|_{x_3=\varepsilon \, \ell_0-0} = \overrightarrow{V_\varepsilon}^{(3)}\big|_{x_3=\varepsilon \, \ell_0+0} =\left(
0,\, 0, \, \mathrm{v}_3\right) \ \  \text{on} \ \Upsilon_\varepsilon^{(3)} (\varepsilon\ell_0).
\end{gather}

Depending on the sign of the function $v^{(2)}_2,$ we have two movements (dynamics) for  the velocity field $\overrightarrow{V_\varepsilon},$ namely,
\begin{enumerate}
  \item it   enters  the cylinders
$\Omega_\varepsilon^{(1)}$ and $\Omega_\varepsilon^{(2)}$ and outgoes the  cylinder $\Omega_\varepsilon^{(3)}$ \quad $(v^{(1)}_1 < 0, \ v^{(2)}_2 <  0, \ v^{(3)}_3 >  0);$

\smallskip

  \item it  enters  the cylinder $\Omega_\varepsilon^{(1)}$  and outgoes the cylinders $\Omega_\varepsilon^{(2)}$ and $\Omega_\varepsilon^{(3)}$ \quad $(v^{(1)}_1 < 0, \ v^{(2)}_2 >  0, \ v^{(3)}_3 >  0).$
\end{enumerate}

To describe the dynamics of the velocity field $\overrightarrow{V_\varepsilon}$ in the node $\Omega_\varepsilon^{(0)},$ we make the following assumptions. The velocity field is conservative  in the node and
its potential $p$ is a solution to the boundary value problem
\begin{equation}\label{potential0}
\left\{\begin{array}{rcll}
    \Delta_\xi p(\xi)  & = & 0, & \quad
    \xi \in \Xi^{(0)},
\\[2mm]
\dfrac{\partial p(\xi)}{\partial \xi_i}  &=&  \mathrm{v}_i, & \quad
   \xi \in \Upsilon^{(i)}_1(\ell_0), \quad i\in \{1, 2, 3\},
\\[4mm]
\dfrac{\partial p(\xi)}{\partial \boldsymbol{\nu}}  &=&  0, & \quad
   \xi \in \Gamma^{(0)} := \partial{\Xi^{(0)}} \setminus \Big( \bigcup_{i=1}^3 \Upsilon^{(i)}_1(\ell_0)\Big),
\end{array}\right.
\end{equation}
where $\Delta_\xi$ is  the Laplace operator,  $\frac{\partial p}{\partial \boldsymbol{\nu}}$ is the derivative along the  outward unit normal $\boldsymbol{\nu}$ to $\partial \Xi^{(0)}.$ The Neumann problem \eqref{potential0} has a  solution if and only if
the  conservation condition
\begin{equation}\label{cond_1}
  \sum_{i=1}^{3} h_i^2(0) \, \mathrm{v}_i = 0
\end{equation}
holds.  To ensure   unique solvability of \eqref{potential0}  we  enforce the condition $\int_{\Xi^{(0)}} p(\xi)\, d\xi =0.$
Thus,
\begin{equation}\label{field_0}
  \overrightarrow{V_\varepsilon}^{(0)}(x)  = \nabla_{\xi} p(\xi)\big|_{\xi = \frac{x}{\varepsilon}} = \varepsilon \nabla_x \big( p(\tfrac{x}{\varepsilon}) \big), \quad x\in \Omega^{(0)}_\varepsilon,
\end{equation}
and, clearly,  $\mathrm{div}_x \overrightarrow{V_\varepsilon}^{(0)} = 0$ in $\Omega^{(0)}_\varepsilon,$ i.e.,
 $\overrightarrow{V_\varepsilon}^{(0)}$  is incompressible in $\Omega^{(0)}_\varepsilon.$
\begin{remark}\label{rem_node}
From the assumption about the conservativeness of the vector field $\overrightarrow{V_\varepsilon}^{(0)}$  it follows  that there is no flow of $\overrightarrow{V_\varepsilon}^{(0)}$  through the surface $\Gamma^{(0)}_\varepsilon$ and the amount of the flow entering the node $\Omega^{(0)}_\varepsilon$ is equal to the amount of the flow outgoing it. Thus, the condition on the node boundary reduces to
$$
\big(-  \varepsilon \,  \mathbb{D}_\varepsilon\nabla_x u_\varepsilon \big) \cdot \boldsymbol{\nu}_\varepsilon   =   \varphi^{(0)}_\varepsilon .
$$
\end{remark}

\smallskip

{\bf A3. \ Assumptions for the  boundary data.} They are determined as follows:
$$
\varphi^{(0)}_\varepsilon(x,t) := \varphi^{(0)}\big(\tfrac{x}{\varepsilon},t\big), \quad (x,t) \in \overline{\Gamma_\varepsilon^{(0)}} \times [0,T],
$$
$$
 \varphi^{(i)}_\varepsilon(x,t) := \varphi^{(i)}\big(\tfrac{\overline{x}_i}{\varepsilon}, x_i, t\big), \quad (x,t) \in \overline{\Gamma_\varepsilon^{(i)}} \times [0,T], \quad i\in \{1,2,3\},
$$
where  the function $\varphi^{(0)}(\xi,t), \ (\xi,t) \in \overline{\Xi^{(0)}}\times [0,T],$  and the functions
$$
\varphi^{(i)}(\overline{\xi}_i,x_i,t), \ \ (\overline{\xi}_i,x_i,t) \in \big\{ |\overline{\xi}_i|\le h_i(x_i), \ x_i\in [0, \ell_i], \ t\in [0,T]\big\}, \quad  i\in\{1,2,3\},
$$
belong to the class $C^2$ with respect to the corresponding  variables $\xi$ and $x$ and to $C^1$ with respect to $t$ in their domains of definition.
In addition  $\varphi^{(0)}$ vanishes uniformly with respect to $t\in [0, T]$ in neighborhoods of
$\Upsilon^{(i)}_1(\ell_0), \ i\in \{1,2,3\},$
the functions $\{\varphi^{(i)}\}_{i=1}^3$ vanish  uniformly with respect to $t$  and $\overline{\xi}_i$ in neighborhoods of the ends  of the corresponding closed interval  $[0, \ell_i].$

The functions $\{q_i(t), \ t\in [0, T]\}_{i=1}^3 $ are smooth and nonnegative.
Moreover, the matching conditions
\begin{equation}\label{match_conditions}
  q_i(0) = q'_i(0)=0, \quad i\in \{1,2,3\}, \quad \text{and} \quad \varphi^{(i)}\big|_{t=0}=0, \quad i\in \{0, 1,2,3\},
\end{equation}
are satisfied.

\medskip

Thus, due to the classical theory of parabolic initial-boundary problem there exists  a unique classical solution to the problem \eqref{probl}  (see e.g. \cite[Chapt. IV, \S 5]{Lad_Sol_Ura_1968}).

\medskip

Our goal is to study the asymptotic behavior of the solution to the problem \eqref{probl} as
$\varepsilon \to 0,$ i.e., when the thin junction $\Omega_\varepsilon$ is shrunk into the graph
$$
 \mathcal{I} := I_1 \cup I_2 \cup I_3,
$$
 where $I_i := \{x\colon  x_i \in  [0, \ell_i], \ \overline{x}_i  = (0, 0)\},$ namely,
 \begin{itemize}
   \item
   develop a procedure for constructing a complete asymptotic expansion of the solution $u_\varepsilon$  as $\varepsilon \to 0,$
   \item
   justify this procedure and prove the corresponding asymptotic estimates,
   \item
   derive the corresponding limit problem $(\varepsilon = 0)$ and prove the existence of its solution,
   \item
 prove energy and uniform pointwise estimates for the difference between the solution to
the problem~\eqref{probl} and the solution to the limit problem.
 \end{itemize}
\begin{remark}
The assumptions made above, except for the conservation property of the vector field $\overrightarrow{V_\varepsilon}$ in the node $\Omega_\varepsilon^{(0)},$ are necessary for the existence of a classical solution to the problem \eqref{probl}. This property of the vector field will be used to prove the maximum principle for the solutions of parabolic problems in thin graph-like junctions  and to estimate the $L^2$-norm of their gradients (see Lemma \ref{the maximum principle}  and Corollary~\ref{apriory_estimate}).
 To construct the asymptotic expansion for the solution, we will need additional assumptions about the smoothness of the coefficients and given functions,  which  are described in Remark~\ref{differ}.
\end{remark}

\section{Formal asymptotic expansions}\label{Sec:expansions}
\subsection{Regular part of the asymptotics}\label{regular_asymptotic}
We seek it  in the form
\begin{equation}\label{regul}
\mathfrak{U}_\varepsilon^{(i)} := w_0^{(i)} (x_i,t) + \sum\limits_{k=1}^{+\infty} \varepsilon^{k}
    \left(w_k^{(i)} (x_i,t) + u_k^{(i)} \Big( x_i, \dfrac{\overline{x}_i}{\varepsilon}, t \Big)
     \right).
\end{equation}
For each $i \in \{1,2,3\},$ the ansatz $\mathfrak{U}_\varepsilon^{(i)}$ is located inside of the thin cylinder $\Omega^{(i)}_\varepsilon$ and their  coefficients depend both on the variable $t,$ the corresponding longitudinal variable $x_i$ and so-called "fast variables" $\frac{\overline{x}_i}{\varepsilon}.$

Formally substituting $\mathfrak{U}_\varepsilon^{(i)}$  into the corresponding differential equation of the problem~\eqref{probl} and collecting terms of the same powers of $\varepsilon,$ we obtain
$$
\dot{w}_0^{(i)} (x_i,t) + \sum\limits_{k=1}^{+\infty} \varepsilon^{k}
    \left(\dot{w}_k^{(i)} (x_i,t) + \dot{u}_k^{(i)} \left( x_i, \bar{\xi}_i, t \right)
     \right)
$$
$$
        - \mathrm{div}_{\bar{\xi}_i}
    \big(
        \tilde{\mathbb{D}}^{(i)}_{\bar{\xi}_i}
        \nabla_{\bar{\xi}_i} u^{(i)}_1(x_i, \bar{\xi}_i, t)
    \big)
    +
    \big(  v_i^{(i)}(x_i) \, w^{(i)}_0(x_i, t) \big)^\prime
+  w^{(i)}_0(x_i,t) \,   \mathrm{div}_{\bar{\xi}_i} \big( \overline{V}^{(i)}(x_i, \bar{\xi}_i) \big)
$$
$$
    + \sum\limits_{k=1}^{+\infty}\varepsilon^{k}
    \Bigg(
        -
        \mathrm{div}_{\bar{\xi}_i}
        \big(
            \tilde{\mathbb{D}}^{(i)}_{\bar{\xi}_i}
            \nabla_{\bar{\xi}_i} u^{(i)}_{k+1}(x_i, \bar{\xi}_i,t)
        \big)
        -
        a_{ii}^{(i)}
        \Big(
            w^{(i)}_{k-1}(x_i,t)
            +
            u^{(i)}_{k-1}(x_i, \bar{\xi}_i,t)
        \Big)^{\prime\prime}
$$
\begin{equation}\label{rel_1}
        +  \Big(  v^{(i)}(x_i) \, \big[ w^{(i)}_k(x_i,t) + u^{(i)}_{k}(x_i, \bar{\xi}_i,t) \big]
        \Big)^\prime
        +
        \mathrm{div}_{\bar{\xi}_i}
        \big(
            \overline{V}^{(i)}(x_i, \bar{\xi}_i) \,
            \big[ w^{(i)}_{k}(x_i,t) + u^{(i)}_{k}(x_i, \bar{\xi}_i,t) \big]
        \big)
    \Bigg) \approx 0,
\end{equation}
where the dot over a function denotes the derivative with respect to $t,$  the symbol ``$\prime$'' denotes the derivative with respect to the longitudinal variable $x_i,$
$\bar{\xi}_i=\frac{\bar{x}_i}{\varepsilon},$
$u^{(i)}_{0} \equiv 0,$
\begin{gather}\notag
\overline{V}^{(1)}(x_1, \bar{\xi}_1) =
\Big( v^{(1)}_2(x_1, \bar{\xi}_1), \, v^{(1)}_3(x_1,\bar{\xi}_1) \Big),
\qquad
\overline{V}^{(2)}(x_2, \bar{\xi}_2) =
\Big( v^{(2)}_1(x_2, \bar{\xi}_2), \, v^{(2)}_3(x_2,\bar{\xi}_2) \Big),
\\[2mm] \label{V-2D}
\overline{V}^{(3)}(x_3, \bar{\xi}_3) =
\Big( v^{(3)}_1(x_3, \bar{\xi}_3), \, v^{(3)}_2(x_3,\bar{\xi}_3) \Big),
\end{gather}
and
\begin{equation}\label{mat-2D}
  \tilde{\mathbb{D}}^{(1)}_{\bar{\xi}_1}
 =
\left(
    \begin{matrix}
        a^{(1)}_{22}(\bar{\xi}_1) & a^{(1)}_{23}(\bar{\xi}_1)
        \\[2mm]
        a^{(1)}_{32}(\bar{\xi}_1) & a^{(1)}_{33}(\bar{\xi}_1)
    \end{matrix}
\right),
\qquad
\tilde{\mathbb{D}}^{(2)}_{\bar{\xi}_2}
 =
\left(
    \begin{matrix}
        a^{(2)}_{11}(\bar{\xi}_2) & a^{(2)}_{13}(\bar{\xi}_2)
        \\[2mm]
        a^{(2)}_{31}(\bar{\xi}_2) & a^{(2)}_{33}(\bar{\xi}_2)
    \end{matrix}
\right),
\qquad
\tilde{\mathbb{D}}^{(3)}_{\bar{\xi}_3}
 =
\left(
    \begin{matrix}
        a^{(3)}_{11}(\bar{\xi}_3) & a^{(3)}_{12}(\bar{\xi}_3)
        \\[2mm]
        a^{(3)}_{21}(\bar{\xi}_3) & a^{(3)}_{22}(\bar{\xi}_3)
    \end{matrix}
\right).
\end{equation}

Equating the coefficients of the same powers of $\varepsilon$ to zero in~\eqref{rel_1}, we get the following differential equations:
\begin{equation}\label{eq_1}
  \mathrm{div}_{\bar{\xi}_i}
    \big(
        \tilde{\mathbb{D}}^{(i)}_{\bar{\xi}_i}
        \nabla_{\bar{\xi}_i} u^{(i)}_1(x_i, \bar{\xi}_i, t)
    \big)
    =
    \big(  v_i^{(i)}(x_i) \, w^{(i)}_0(x_i, t) \big)^\prime +  w^{(i)}_0(x_i,t) \,   \mathrm{div}_{\bar{\xi}_i} \big( \overline{V}^{(i)}(x_i, \bar{\xi}_i) \big)+ \dot{w}_0^{(i)} (x_i,t), \quad \overline{\xi}_i\in\Upsilon_i (x_i),
\end{equation}
where \ $\Upsilon_i(x_i) := \big\{ \overline{\xi}_i\in\Bbb{R}^2 \colon |\overline{\xi}_i|< h_i(x_i) \big\};$
and for $k \in \Bbb N$
\begin{align}\label{eq_3}
  \mathrm{div}_{\bar{\xi}_i}
        \big(
            \tilde{\mathbb{D}}^{(i)}_{\bar{\xi}_i}
            \nabla_{\bar{\xi}_i} u^{(i)}_{k+1}(x_i, \bar{\xi}_i, t)
        \big) = &  \ \Big(  v^{(i)}(x_i) \, \big[ w^{(i)}_k(x_i,t) + u^{(i)}_{k}(x_i, \bar{\xi}_i,t) \big]
        \Big)^\prime  + \dot{w}_k^{(i)} (x_i,t) +   \dot{u}_k^{(i)} \left( x_i, \bar{\xi}_i, t \right) \notag
  \\
  +& \   \mathrm{div}_{\bar{\xi}_i}
        \Big(
            \overline{V}^{(i)}(x_i, \bar{\xi}_i) \,
            \big[ w^{(i)}_{k}(x_i,t) + u^{(i)}_{k}(x_i, \bar{\xi}_i,t) \big]
        \Big)\notag
            \\
             - & \
        a_{ii}^{(i)}
        \Big( w^{(i)}_{k-1}(x_i,t)
            +
            u^{(i)}_{k-1}(x_i, \bar{\xi}_i,t)
        \Big)^{\prime\prime},
\qquad  \bar{\xi}_i \in \Upsilon_i (x_i).
\end{align}

The outward unit normal  to the lateral surface of the thin cylinder $\Omega^{(i)}_\varepsilon$
is as follows
\begin{equation}\label{normal_1}
  {\boldsymbol{\nu}}_\varepsilon = \frac{1}{\sqrt{1 + \varepsilon^2 |h'_i(x_i)|^2}} \Big( - \varepsilon h'_i(x_i), \bar{\nu}_{\bar{\xi}_i}\Big),
\end{equation}
where $\bar{\nu}_{\bar{\xi}_i}$ is the outward unit normal to the boundary of the
disk  $\Upsilon_i(x_i).$
In \eqref{normal_1} the term  ``$- \varepsilon h'_i(x_i)$''  is  the $i$th component of the vector ${\boldsymbol{\nu}}_\varepsilon$  $(i\in \{1, 2, 3\}).$

Substituting \eqref{regul} in the boundary condition on the  lateral surface of the thin cylinder $\Omega^{(i)}_\varepsilon$, we deduce the following asymptotic equation:
$$
  \varepsilon a^{(i)}_{ii}\, h'_i(x_i)
  \bigg( \big(w_0^{(i)} (x_i,t)\big)'  + \sum\limits_{k=1}^{+\infty} \varepsilon^{k}
    \left(\big(w_k^{(i)} (x_i,t)\big)' + \big(u_k^{(i)} \left( x_i, \bar{\xi}_i, t \right)\big)'\right)
  $$
  $$
  - \sum\limits_{k=1}^{+\infty} \varepsilon^{k-1} \big(
            \tilde{\mathbb{D}}^{(i)}_{\bar{\xi}_i}
            \nabla_{\bar{\xi}_i} u^{(i)}_{k}(x_i, \bar{\xi}_i, t)\big) \cdot \bar{\nu}_{\bar{\xi}_i}
          -  v^{(i)}_{i}(x_i) \, h'_i(x_i)
  \bigg( w_0^{(i)} (x_i,t)  + \sum\limits_{k=1}^{+\infty} \varepsilon^{k}
    \left(w_k^{(i)} (x_i,t) + u_k^{(i)} \left( x_i, \bar{\xi}_i, t \right)\right) \bigg)
  $$
  $$
    +  \big(\overline{V}^{(i)}(x_i, \bar{\xi}_i) \cdot \bar{\nu}_{\bar{\xi}_i}\big)
    \bigg( w_0^{(i)} (x_i,t)  + \sum\limits_{k=1}^{+\infty} \varepsilon^{k}
    \left(w_k^{(i)} (x_i,t) + u_k^{(i)} \left( x_i, \bar{\xi}_i, t \right)\right)\bigg)
  $$
\begin{equation}\label{asym_relation_1}
     \approx \sqrt{1 + \varepsilon^2 |h'_i(x_i)|^2}\, \varphi^{(i)}(\bar{\xi}_i, x_i,t)
       = \varphi^{(i)}(\bar{\xi}_i, x_i,t) \sum_{k=0}^{+\infty} \frac{(-1)^{k+1} (2k)! |h'_i(x_i)|^{2k}}{4^k (k!)^2 (2k-1)} \, \varepsilon^{2k}
\end{equation}
Equating the coefficients of the same powers of $\varepsilon$ in \eqref{asym_relation_1}, we get
\begin{equation}\label{bc_1}
  \Big(-\tilde{\mathbb{D}}^{(i)}_{\bar{\xi}_i}
            \nabla_{\bar{\xi}_i} u^{(i)}_{1}(x_i, \bar{\xi}_i, t)   +  w_0^{(i)} (x_i,t) \, \overline{V}^{(i)}(x_i, \bar{\xi}_i)\Big) \cdot \bar{\nu}_{\bar{\xi}_i}
                 =   v^{(i)}_{i}(x_i) \, h'_i(x_i)
  \, w_0^{(i)} (x_i,t) + \varphi^{(i)}(\bar{\xi}_i, x_i,t), \quad \bar{\xi}_i \in \partial \Upsilon_i(x_i);
\end{equation}
\begin{align}\label{bc_3}
  \Big(-\tilde{\mathbb{D}}^{(i)}_{\bar{\xi}_i} \nabla_{\bar{\xi}_i} u^{(i)}_{k+1} +
  \big(w_k^{(i)} + u_k^{(i)}\big)\, \overline{V}^{(i)}(x_i, \bar{\xi}_i) \Big) \cdot \bar{\nu}_{\bar{\xi}_i} = &\   v^{(i)}_{i}(x_i) \, h'_i(x_i)
            \left(w_k^{(i)} (x_i,t) + u_k^{(i)} \left( x_i, \bar{\xi}_i, t \right)\right) \notag
            \\
            - & \ a^{(i)}_{ii}\, h'_i(x_i) \left(w_{k-1}^{(i)} (x_i,t) + u_{k-1}^{(i)} \left( x_i, \bar{\xi}_i, t \right)\right)' \notag
            \\
            + &\ \eta^{(i)}_k(x_i)\,  \varphi^{(i)}(\bar{\xi}_i, x_i,t),\qquad \bar{\xi}_i \in \partial \Upsilon_i(x_i)  \ \ (k \in \Bbb N),
\end{align}
where
\begin{equation}\label{eta_k}
  \eta^{(i)}_k(x_i) :=
\left\{
  \begin{array}{ll}
    0, & \hbox{if} \ k \ \hbox{is odd}; \\
    \dfrac{(-1)^{\frac{k}{2}+1} \, k! \, |h'_i(x_i)|^{k}}{4^{\frac{k}{2}} \, ((\frac{k}{2})!)^2 \, (k-1)}, & \hbox{if} \ k \ \hbox{is even}.
  \end{array}
\right.
\end{equation}

Equations \eqref{eq_1} and \eqref{bc_1},  and equations \eqref{eq_3} and \eqref{bc_3}
are Neumann problems in the disk $\Upsilon_i(x_i)$ with respect to the variables $\bar{\xi}_i$ for the fixed index $i\in \{1, 2, 3\},$ and the variables $x_i$ and $t$ are regarded as parameters from $I_\varepsilon^{(i)} \times (0, T)$, where
$$
I_\varepsilon^{(i)} := \{x\colon x_i \in (\varepsilon \ell_0, \ell_i), \quad \bar{x}_i = (0, 0) \}.
$$
For the uniqueness, we supply each of these problems with the corresponding condition
\begin{equation}\label{uniq_1}
  \langle u_k^{(i)}(x_i, \cdot ,  t ) \rangle_{\Upsilon_i(x_i)} :=  \int_{\Upsilon_i(x_i)} u_k^{(i)}(x_i, \bar{\xi}_i,  t ) \, d\bar{\xi}_i = 0 \quad (k\in \Bbb N).
\end{equation}

Writing down the necessary and sufficient condition for the solvability of the problem  \eqref{eq_1} and \eqref{bc_1}, we deduce the following differential equation for the coefficient $w^{(i)}_0$:
\begin{equation}\label{lim_0}
  h^2_i(x_i)\, \partial_t{w}^{(i)}_0(x_i,t) + \partial_{x_i}\big( v_i^{(i)}(x_i)\,  h^2_i(x_i)\, w^{(i)}_0(x_i,t) \big) = - 2 h_i(x_i)\,  \widehat{\varphi}^{(i)}(x_i, t), \quad (x_i, t) \in I_\varepsilon^{(i)} \times (0, T),
\end{equation}
where $\partial_t{w}:=\frac{\partial w}{\partial t}, \ \partial_{x_i}{w}:=\frac{\partial w}{\partial x_i},$ and
\begin{equation}\label{hat_phi}
  \widehat{\varphi}^{(i)}(x_i, t) := \frac{1}{2\pi h_i(x_i)} \int_{\partial \Upsilon_i(x_i)} \varphi^{(i)}(\bar{\xi}_i,  x_i, t)\, d\sigma_{\bar{\xi}_i}.
\end{equation}

Let there be a solution of the equation \eqref{lim_0} (we will show its existence below). Then there is a unique solution to the inhomogeneous Neumann problem consisting of the equations \eqref{eq_1}, \eqref{bc_1} and \eqref{uniq_1}  $(k=1).$

Repeating this procedure for \eqref{eq_3} and \eqref{bc_3} and taking into account \eqref{uniq_1}, we get
\begin{multline}\label{lim_2}
  h^2_i(x_i)\, \partial_{t}{w}^{(i)}_k(x_i,t) + \partial_{x_i}\big( v_i^{(i)}(x_i)\,  h^2_i(x_i)\, w^{(i)}_k(x_i,t) \big)
\\
= a_{ii}^{(i)} \partial_{x_i} \big(h^2_i(x_i)\, \partial_{x_i} w_{k-1}^{(i)}(x_i,t)\big)
-  v_i^{(i)}(x_i) \partial_{x_i}\big(h^2_i(x_i)\big) \,  \widehat{u}^{(i)}_k(x_i, t)
\\
+ a_{ii}^{(i)} \partial_{x_i}\big(h^2_i(x_i)\big) \,   \partial_{x_i}\widehat{u}^{(i)}_{k-1}(x_i, t) -  2 h_i(x_i)\, \eta_k^{(i)}(x_i) \,  \widehat{\varphi}^{(i)}(x_i, t), \quad (x_i, t) \in I_\varepsilon^{(i)} \times (0, T)),
\end{multline}
where
\begin{equation}\label{hat_u_k}
 \widehat{u}^{(i)}_k(x_i, t) := \frac{1}{2\pi h_i(x_i)} \int_{\partial \Upsilon_i(x_i)} {u}^{(i)}_k(x_i, t)(x_i, \bar{\xi}_i, t)\, d\sigma_{\bar{\xi}_i}.
\end{equation}
Again, let there be a solution of the equation \eqref{lim_2}. Then there is a unique solution to the inhomogeneous Neumann problem consisting of the equations \eqref{eq_3}, \eqref{bc_3} and \eqref{uniq_1}.

Thus, we can uniquely determine solutions $\{{u}^{(i)}_k\}_{k\in \Bbb N}$ of the recurrent relations of the inhomogeneous Neumann problems knowing solutions $\{{w}^{(i)}_k\}_{k\in \Bbb N_0}$ of the recurrent relations \eqref{lim_0} and  \eqref{lim_2} of the partial differential equations of the first order.

\begin{remark}\label{r_2_1}
Since the functions $\{\varphi^{(i)}\}_{i=1}^3$ and $\{\overline{V}^{(i)}\}_{i=1}^3$  have compact supports with respect to the corresponding longitudinal variable and the functions $\{h_i\}_{i=1}^3$ are constant in neighborhoods
of the ends of segment $[0, \ell_i]$, the coefficients $\{u_k^{(i)}\}$ vanish in the corresponding neighborhoods
of the ends of $[0, \ell_i].$
\end{remark}

\subsection{Node-layer part of the asymptotics. The limit problem}\label{subsec_Inner_part}
To uniquely define the coefficients $\{w_k^{(i)}\}$ we should launch
the inner part of the asymptotics for the solution to problem ~(\ref{probl}). For this purpose we pass to the variables $\xi=\frac{x}{\varepsilon}.$ Letting $\varepsilon$ to $0,$ we see  that the domain $\Omega_\varepsilon$ is transformed into the unbounded domain $\Xi$ that  is the union of the domain~$\Xi^{(0)}$ and three semibounded cylinders
$$
\Xi^{(i)}
 =  \{ \xi=(\xi_1,\xi_2,\xi_3)\in\Bbb R^3 \ :
    \quad  \ell_0<\xi_i<+\infty,
    \quad |\overline{\xi}_i|<h_i(0) \},
\qquad i\in \{1,2,3\},
$$
i.e., $\Xi$ is the interior of the set $\bigcup_{i=0}^3\overline{\Xi^{(i)}}$ (see Fig.~\ref{Fig-5}).

\begin{figure}[htbp]
\begin{center}
\includegraphics[width=7cm]{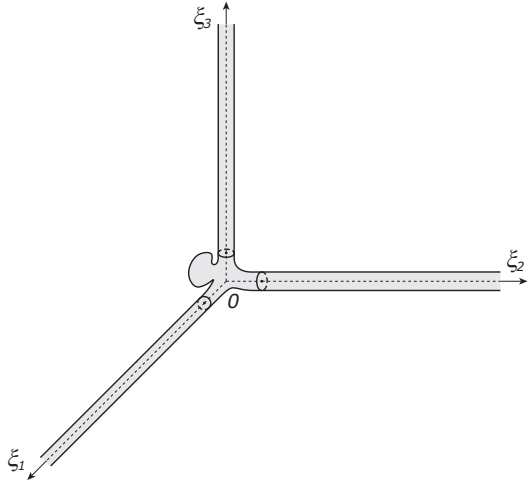}
\vskip - 10pt
\caption{Domain $\Xi$}\label{Fig-5}
\end{center}
\end{figure}

For parts of the   boundary of the domain $\Xi$ we  introduce notation
$$
\Gamma_i = \{ \xi\in\Bbb R^3 \colon \ \  \ell_0 <\xi_i<+\infty, \quad |\overline{\xi}_i|=h_i(0)\}, \quad i\in \{1,2,3\}, \quad \text{and} \quad
\Gamma_0 = \partial\Xi \backslash \Big(\bigcup_{i=1}^3 \Gamma_i \Big).
$$

In a neighborhood of the node $\Omega^{(0)}_\varepsilon$ the following ansatz is proposed:
 \begin{equation}\label{inner_part}
\mathfrak{N}_\varepsilon :=\sum\limits_{k=0}^{+\infty}\varepsilon^k N_k\left(\frac{x}{\varepsilon},t\right).
\end{equation}
Substituting $\mathfrak{N}_\varepsilon$  into the differential equation and  boundary conditions of problem ~(\ref{probl}) , collecting terms of the same powers of $\varepsilon$ and taking into account the assumptions {\bf A2} and  Remark~\ref{r_2_1},  we get for each $k\in \Bbb N_0$ the following relations:
\begin{equation}\label{N_0_prob}
\left\{\begin{array}{rcll}
  -   \mathrm{div}_\xi \big( \mathbb{D}^{(0)}(\xi) \nabla_\xi {N}_0 \big) +
  \overrightarrow{V}(\xi)\cdot \nabla_\xi {N}_0 & = &  0, &  \xi \in\Xi^{(0)},
\\[2mm]
-\sigma_\xi(N_0(\xi,t))  &=&  \varphi^{(0)}(\xi,t), &   \xi \in \Gamma_0,
\\[2mm]
 -   \mathrm{div}_\xi \big( \mathbb{D}^{(i)}(\xi) \nabla_\xi{N}_0 \big) +
  \mathrm{v}_i \, \partial_{\xi_i}{N}_0  & = &  0, &
     \xi \in\Xi^{(i)}, \ \  i\in \{1,2,3\},
\\[2mm]
\sigma_\xi(N_0)  &=&  0, &
   \xi \in \Gamma_i, \ \ i\in \{1,2,3\},
\\[2mm]
N_0(\xi,t)     \  \sim \   w^{(i)}_{0}(0,t)   &\text{as}  &\xi_i \to +\infty, &
   \xi  \in \Xi^{(i)}, \ \  i\in \{1,2,3\};
\end{array}\right.
\end{equation}
and for $k\in \Bbb N$
\begin{equation}\label{N_k_prob}
\left\{\begin{array}{rcll}
-   \mathrm{div}_\xi \big( \mathbb{D}^{(0)}(\xi) \nabla_\xi {N}_k(\xi,t) \big) +
  \overrightarrow{V}(\xi)\cdot \nabla_\xi {N}_k(\xi,t) & = &  - \dot{{N}}_{k-1}(\xi,t), &  \xi \in\Xi^{(0)},
\\[2mm]
 -   \mathrm{div}_\xi \big( \mathbb{D}^{(i)}(\xi) \nabla_\xi{N}_k \big) +
  \mathrm{v}_i \, \partial_{\xi_i}{N}_k  & = &  - \dot{{N}}_{k-1}(\xi,t), &
     \xi \in\Xi^{(i)}, \ \  i\in \{1,2,3\},
\\[2mm]
\sigma_\xi(N_k) &=&  0, &
   \xi \in \partial\Xi,
\\[2mm]
N_k(\xi,t)     \  \sim \   w^{(i)}_{k}(0,t) + \Psi^{(i)}_{k}(\xi_i,t)      &\text{as}  &\xi_i \to +\infty, &
    \xi  \in \Xi^{(i)}, \ \  i\in \{1,2,3\},
 \end{array}\right.
\end{equation}
where the variable $t$ is considered as a parameter in these problems,  $\sigma_\xi(N_k) := \boldsymbol{\nu}_{\xi} \cdot \big({\mathbb{D}} \nabla_{\xi} N_k\big) ,$
$\boldsymbol{\nu}_{\xi}$ is the outward unit normal to $\partial \Xi,$
 $\mathbb{D}(\xi)= \mathbb{D}^{(0)}(\xi) := \mathbb{D}_\varepsilon^{(0)}(x)\big|_{x= \varepsilon \xi}$ if $\xi \in \Xi^{(0)},$
 $$
\mathbb{D}(\xi) = \mathbb{D}^{(1)}(\overline{\xi}_1) :=
\left(
\begin{matrix}
  a^{(1)}_{11} & 0 & 0 \\[2mm]
  0 & a^{(1)}_{22}(\overline{\xi}_1)  &  a^{(1)}_{23}(\overline{\xi}_1) \\[2mm]
  0 &  a^{(1)}_{32}(\overline{\xi}_1) &  a^{(1)}_{33}(\overline{\xi}_1)
\end{matrix}
\right), \quad \xi \in \Xi^{(1)},
$$

$$
\mathbb{D}(\xi)  = \mathbb{D}^{(2)}(\overline{\xi}_2) :=
\left(
\begin{matrix}
  a^{(2)}_{11}(\overline{\xi}_2) & 0 & a^{(2)}_{13}(\overline{\xi}_2) \\[2mm]
  0 & a^{(2)}_{22} & 0 \\[2mm]
  a^{(2)}_{31}(\overline{\xi}_2) & 0 & a^{(2)}_{33}(\overline{\xi}_2)
\end{matrix}
\right),  \quad \xi \in \Xi^{(2)},
$$

$$
\mathbb{D}(\xi) = \mathbb{D}^{(3)}(\overline{\xi}_3) :=
\left(
\begin{matrix}
  a^{(3)}_{11}(\overline{\xi}_3) & a^{(3)}_{12}(\overline{\xi}_3) & 0 \\[2mm]
  a^{(3)}_{21}(\overline{\xi}_3) & a^{(3)}_{22}(\overline{\xi}_3) & 0 \\[2mm]
  0 & 0 & a^{(3)}_{33}
\end{matrix}
\right), \quad \xi \in \Xi^{(3)},
$$
$
\overrightarrow{V}(\xi) = \overrightarrow{V}_\varepsilon^{(0)}(x)\big|_{x= \varepsilon \xi}
$ if $\xi \in \Xi^{(0)},$
$$
\overrightarrow{V}(\xi) = \left( \mathrm{v}_1, \, 0,\, 0\right) \ \  \text{if}  \ \ \xi \in \Xi^{(1)},
\quad
\overrightarrow{V}(\xi) = \left(0, \,  \mathrm{v}_2,\, 0\right) \ \ \text{if} \ \ \xi \in \Xi^{(2)},
\quad
\overrightarrow{V}(\xi) = \left( 0,\, 0, \, \mathrm{v}_3\right) \ \ \text{ if} \ \ \xi \in \Xi^{(3)},
$$
and  in view of the assumptions {\bf A2} we have that $\overrightarrow{V} \cdot \boldsymbol{\nu}_\xi =0$ on $\Gamma_i,$ $ i\in \{0,1,2,3\}.$

Relations in the last lines both of \eqref{N_0_prob} and \eqref{N_k_prob}  appear by matching the regular and inner asymptotics in a neighborhood of the node, namely the asymptotics of the terms $\{N_k\}$ as $\xi_i \to +\infty$ have to coincide with the corresponding asymptotics of  terms of the regular expansions (\ref{regul}) as $x_i =\varepsilon \xi_i \to +0, \ i=1,2,3,$ respectively.
Expanding each term of the regular asymptotics in the Taylor series at the points $x_i=0, \ i=1,2,3,$
and collecting the coefficients of the same powers of $\varepsilon,$  we get
\begin{equation}\label{Psi_k}
\Psi_{0}^{(i)} \equiv 0, \qquad
\Psi_{k}^{(i)}(\xi_i,t)
 =   \sum\limits_{j=1}^{k} \dfrac{\xi_i^j}{j!}
     \dfrac{\partial^j w_{k-j}^{(i)}}{\partial x_i^j} (0,t),
\quad  i=1,2,3, \ \ k \in \Bbb N.
\end{equation}

We look for a solution to the problem \eqref{N_0_prob} in the form
\begin{equation}\label{new-solution_0}
N_0(\xi,t) = \sum\limits_{i=1}^3w^{(i)}_{k}(0,t) \,\chi_{\ell_0}(\xi_i) + \widetilde{N}_0(\xi,t),
\end{equation}
where $ \chi_{\ell_0} \in C^{\infty}(\Bbb{R})$ is a cut-off function such that
$\ 0\leq \chi_{\ell_0} \leq1,$  $\chi_{\ell_0}(t) =0$ if $t \leq  1+\ell_0$  and
$\chi_{\ell_0}(t) =1$ if $t \geq  2+\ell_0.$
Then $\widetilde{N}_0$ has  to be  a  solution to the problem
\begin{equation}\label{tilda_N_0_prob}
\left\{\begin{array}{rcll}
  -   \mathrm{div}_\xi \big( \mathbb{D}^{(0)}(\xi) \nabla_\xi \widetilde{N}_0 \big) +
  \overrightarrow{V}(\xi)\cdot \nabla_\xi \widetilde{N}_0
  & = & 0, & \quad
    \xi \in\Xi^{(0)},
\\[2mm]
-\sigma_\xi(\widetilde{N}_0(\xi,t))  &=&  \varphi^{(0)}(\xi,t), &
   \quad \xi \in \Gamma_0,
\\[2mm]
 -   \mathrm{div}_\xi \big( \mathbb{D}^{(i)}(\xi) \nabla_\xi \widetilde{N}_0(\xi,t) \big) +
  \mathrm{v}_i \, \partial_{\xi_i} \widetilde{N}_0(\xi,t)  & = & f_0^{(i)}(\xi_i,t), &
    \quad \xi \in\Xi^{(i)},
\\[2mm]
\sigma_\xi(\widetilde{N}_0)  &=&  0, & \quad
   \xi \in \Gamma_i, \quad i\in \{1,2,3\},
\end{array}\right.
\end{equation}
and has to satisfy the conditions:
\begin{equation}\label{junc_probl_general+cond_0}
   \widetilde{N}_0(\xi,t)  \rightarrow  0
   \quad \text{as} \quad \xi_i \to +\infty, \ \  \xi  \in \Xi^{(i)}, \quad i\in \{1,2,3\},
\end{equation}
where $ \partial_{\xi_i} = \frac{\partial}{\partial_{\xi_i}},$
\begin{equation}\label{F_1-0}
f_0^{(i)}(\xi_i,t)  =  a^{(i)}_{ii} \, w^{(i)}_0(0,t) \, \chi''_{\ell_0}(\xi_i) - \mathrm{v}_i  \, w^{(i)}_0(0,t) \, \chi'_{\ell_0}(\xi_i) .
\end{equation}

A solution to the problem \eqref{N_k_prob} is sought in the form
\begin{equation}\label{new-solution_k}
N_k(\xi,t) = \sum\limits_{i=1}^3\big(w^{(i)}_{k}(0,t) + \Psi^{(i)}_{k}(\xi_i,t) \big) \,\chi_{\ell_0}(\xi_i) + \widetilde{N}_k(\xi,t).
\end{equation}
Then $\widetilde{N}_k$ has  to be  a  solution to the problem
\begin{equation}\label{tilda_N_k_prob}
\left\{\begin{array}{rcll}
  -   \mathrm{div}_\xi \big( \mathbb{D}^{(0)}(\xi) \nabla_\xi \widetilde{N}_k(\xi,t) \big) +
  \overrightarrow{V}(\xi)\cdot \nabla_\xi \widetilde{N}_k(\xi,t)
  & = &  - \dot{{N}}_{k-1}(\xi,t), &
    \xi \in\Xi^{(0)},
\\[2mm]
 -   \mathrm{div}_\xi \big( \mathbb{D}^{(i)}(\xi) \nabla_\xi \widetilde{N}_k \big) +
  \mathrm{v}_i \, \partial_{\xi_i} \widetilde{N}_k  & = & f_k^{(i)}   - \dot{{N}}_{k-1}, &
     \xi \in\Xi^{(i)},
\\[2mm]
\sigma_\xi(\widetilde{N}_k)  &=&  0, &
   \xi \in \Gamma_i, \ \  i\in \{0,1,2,3\},
\end{array}\right.
\end{equation}
and has to satisfy the conditions:
\begin{equation}\label{junc_probl_general+cond}
   \widetilde{N}_k(\xi)  \rightarrow  0
   \quad \text{as} \quad \xi_i \to +\infty, \ \  \xi  \in \Xi^{(i)}, \quad i\in \{1,2,3\}.
\end{equation}
Here
\begin{equation}\label{F_1}
\begin{array}{rcl}
f_1^{(i)}(\xi_i,t) &= & \ a^{(i)}_{ii} \Big( w^{(i)}_1(0,t) \, \chi''_{\ell_0}(\xi_i) + \dfrac{\partial w_0^{(i)}}{\partial x_i}(0,t)\,   \Big( \big(\xi_i \chi_{\ell_0}^{\prime}(\xi_i)\big)^\prime
 +  \chi_{\ell_0}^{\prime}(\xi_i) \Big) \Big) \notag
\\[1.2ex]
&& {} -  \ \mathrm{v}_i  \Big( w^{(i)}_1(0,t) \, \chi'_{\ell_0}(\xi_i) + \dfrac{\partial w_0^{(i)}}{\partial x_i}(0,t) \,  \big(\xi_i \chi_{\ell_0}(\xi_i)\big)^\prime\Big), \end{array}
\end{equation}
and
\begin{equation}\label{F_k}
\begin{array}{rcl}
f_k^{(i)}(\xi_i,t)  &= & \ a^{(i)}_{ii} \Big( w^{(i)}_k(0,t) \, \chi''_{\ell_0}(\xi_i) + \Big(\Psi_{k}^{(i)}(\xi_i,t)\chi^\prime_i(\xi_i)\Big)^\prime + \Big(\Psi_{k}^{(i)}(\xi_i,t)\Big)^\prime \chi^\prime_i(\xi_i)
+ \Big(\Psi_{k}^{(i)}(\xi_i,t)\Big)^{\prime\prime}  \chi_i(\xi_i)\Big) \notag
\\[1.2ex]
 && {}-  \ \mathrm{v}_i  \Big( w^{(i)}_k(0,t) \, \chi'_{\ell_0}(\xi_i)  + \big(\Psi_{k}^{(i)}(\xi_i,t) \, \chi_{\ell_0}(\xi_i)\big)^\prime\Big).
 \end{array}
\end{equation}

Boundary value problems in unbounded domains are well studied (see e.g. \cite{Ber-Nir_1990, Kon-Ole_1983,Koz-Maz-Ros_97,Na-Pla,Naz99,Ole_book_1996}).
We will use the general approach proposed in \cite[\S 2.2]{Ole_book_1996} and \cite[\S 3]{Naz99}, which was realised for problems like
\eqref{tilda_N_0_prob} and \eqref{tilda_N_k_prob} in \cite{Mel-Klev_AA-2022}.

Let us define for $\beta >0 $ the weighted Sobolev space $\mathcal{H}_\beta$ as the set of all functions from $H^1(\Xi)$ with  finite norm
$$
\|u\|_{\beta} := \bigg(\int_{\Xi} \varrho(\xi)\Big(|\nabla_\xi u|^2 + |u|^2 \Big) d\xi  \bigg)^{1/2}.
$$
Here $\varrho$ is a smooth function such that
$$
\varrho(\xi) = \left\{
                 \begin{array}{ll}
                   1, & \ \ \xi \in \Xi^{(0)},
\\
                   e^{\beta \xi_i} , &\ \  \xi_i \ge 2 \ell_0, \ \ \xi \in \Xi^{(i)}, \ i\in\{1, 2, 3\}.
                 \end{array}
               \right.
$$

Consider a problem
\begin{equation}\label{tilda_N}
\left\{\begin{array}{rcll}
  -   \mathrm{div}_\xi \big( \mathbb{D}^{(0)}(\xi) \nabla_\xi \widetilde{N} \big) +
 \overrightarrow{V}(\xi) \cdot \nabla_\xi \widetilde{N}
  & = &  F^{(0)}, &
    \xi \in\Xi^{(0)},
\\[2mm]
-\sigma_\xi(\widetilde{N})  &=&  \Psi^{(0)}, &
    \xi \in \Gamma_0,
\\[2mm]
 -   \mathrm{div}_\xi \big( \mathbb{D}^{(i)}(\xi) \nabla_\xi \widetilde{N}\big) +
  \mathrm{v}_i \, \partial_{\xi_i} \widetilde{N}  & = & F^{(i)}, &
     \xi \in\Xi^{(i)}, \ \  i\in \{1,2,3\},
\\[2mm]
\sigma_\xi(\widetilde{N})  &=&  0, &
   \xi \in \Gamma_i, \ \  i\in \{1,2,3\}.
\end{array}\right.
\end{equation}

Based on results of \cite[Lemma 3.1]{Mel-Klev_AA-2022} we have the following  statement.
\begin{proposition}\label{Prop-2-1} Let the functions on the right-hand sides of the problem \eqref{tilda_N} be given satisfying the following conditions:
$F^{(0)} \in L^2(\Xi^{(0)}),$ $\Psi^{(0)} \in L^2(\Gamma_0),$ and for all $i\in \{1, 2, 3\}$
$$
  \int_{\Xi^{(i)}} e^{\beta \xi_i} \, \big(F^{(i)}(\xi)\big)^2 \,  d\xi < +\infty \quad (\beta > 0).
$$

Then the problem \eqref{tilda_N} has a unique solution in the space $\mathcal{H}_\beta$ if and only if
the equality
\begin{equation}\label{cong_cond_g}
 \sum_{i=0}^{3}\int_{\Xi^{(i)}} F^{(i)}(\xi) \,  d\xi = \int_{\Gamma_0} \Psi^{(0)}(\xi) \,  d\sigma_\xi
\end{equation}
holds.
\end{proposition}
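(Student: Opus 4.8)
The plan is to adapt the classical Fredholm-type analysis for second-order elliptic problems in domains with cylindrical outlets to infinity, exactly along the lines of \cite[\S 2.2]{Ole_book_1996}, \cite[\S 3]{Naz99} and the realisation for an analogous problem in \cite[Lemma 3.1]{Mel-Klev_AA-2022}. The weighted space $\mathcal{H}_\beta$ forces exponential decay in each semicylinder $\Xi^{(i)}$, so the natural object to look for is the unique decaying solution, and the obstruction to its existence is precisely the orthogonality relation \eqref{cong_cond_g}.

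First I would set up the weak formulation: multiply the equations by a test function $\psi \in \mathcal{H}_{-\beta}$ (or, more conveniently, first work in the symmetric space $\mathcal{H}_0 = H^1(\Xi)$ with the subtlety that the convection terms in the cylinders are pure $x_i$-derivatives with constant coefficient $\mathrm{v}_i$), integrate by parts using $\sigma_\xi(\widetilde N)=0$ on $\Gamma_i$ and the Neumann datum $\Psi^{(0)}$ on $\Gamma_0$, and observe that since $\overrightarrow V$ is divergence-free in $\Xi^{(0)}$ and tangent to every $\Gamma_i$, the convective bilinear form is antisymmetric; hence it does not spoil coercivity of the principal part, which is guaranteed by the uniform ellipticity \eqref{n1}. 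The key functional-analytic step is to realise the operator of the problem as a compact perturbation of an isomorphism between $\mathcal{H}_\beta$ and its (weighted) dual. This is done by the standard device: in each outlet $\Xi^{(i)}$, for the exponentially weighted problem, the ``limit operator at infinity'' is $-a_{ii}^{(i)}\partial_{\xi_i}^2 - \mathrm{div}_{\overline\xi_i}(\widetilde{\mathbb D}^{(i)}\nabla_{\overline\xi_i}) + \mathrm{v}_i\partial_{\xi_i}$ on the cross-section cylinder with Neumann lateral conditions; one checks that for $\beta$ in a punctured neighbourhood of $0$ (small enough, depending on $\mathrm{v}_i$, $a_{ii}^{(i)}$ and the first Neumann eigenvalue of the cross-section) the corresponding pencil has no eigenvalues on the weight line, so the operator is Fredholm of index equal to minus the number of ``unstable'' constant modes, which here is exactly $3$ (one constant per outlet). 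The cokernel is therefore three-dimensional and is spanned by the constants $\mathbf{1}_{\Xi^{(i)}}$, $i=1,2,3$; but because of the single conservation-type relation these collapse to one scalar solvability condition, giving \eqref{cong_cond_g}. Computing the solvability condition concretely is just Green's formula: pair the system with the constant function $1$ on all of $\Xi$, use that $\widetilde N$ and its flux decay exponentially at each end, and the boundary and volume terms reassemble into \eqref{cong_cond_g}.

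Uniqueness in $\mathcal{H}_\beta$ follows from the energy identity: testing the homogeneous problem with $\widetilde N$ itself and using the antisymmetry of the convection term plus ellipticity gives $\nabla_\xi\widetilde N \equiv 0$, hence $\widetilde N$ is constant on each connected outlet, and the exponential decay in $\mathcal{H}_\beta$ forces that constant to be $0$; since $\Xi$ is connected, $\widetilde N\equiv 0$ on all of $\Xi^{(0)}$ as well. Conversely, once \eqref{cong_cond_g} holds the Fredholm alternative yields a solution in $\mathcal{H}_\beta$ (the hypotheses $F^{(0)}\in L^2(\Xi^{(0)})$, $\Psi^{(0)}\in L^2(\Gamma_0)$, and the exponential summability of $F^{(i)}$ guarantee the right-hand side lies in the dual of $\mathcal{H}_\beta$), and uniqueness fixes it. The main obstacle, and the place where care is genuinely needed, is the Fredholm/index computation for the operator pencil at each cylindrical outlet: one must verify that the weight $e^{\beta\xi_i}$ with small $\beta>0$ sits strictly between the zero mode and the first nonzero mode of the transverse problem, so that exactly the three constant modes are ``captured'' and nothing else, and then track precisely how the three-dimensional cokernel is cut down to the single equation \eqref{cong_cond_g}. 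Everything else — the weak formulation, the antisymmetry of convection, coercivity of the principal part, and the Green-formula derivation of the compatibility condition — is routine given the structure already recorded in Assumptions \textbf{A1}, \textbf{A2} and the cited references.
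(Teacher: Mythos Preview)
The paper does not actually prove this proposition: it simply states that the result follows from \cite[Lemma 3.1]{Mel-Klev_AA-2022}, so there is no ``paper's own proof'' to compare against beyond that citation. Your sketch is precisely the standard argument that sits behind that lemma and the references \cite{Ole_book_1996,Naz99,Na-Pla}, and it is essentially correct in structure: weak formulation, antisymmetry of the convection term (using $\mathrm{div}\,\overrightarrow V=0$ and tangency on $\partial\Xi$), coercivity of the principal part, Fredholm property from the model problems in the outlets, and Green's formula against the constant $1$ to read off the compatibility condition.

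One point deserves tightening. You write that the cokernel is three-dimensional, ``spanned by the constants $\mathbf{1}_{\Xi^{(i)}}$'', and then collapses to a single condition. That framing is misleading: the domain $\Xi$ is connected, so the only bounded solution of the homogeneous adjoint problem (which has the same structure, with $\overrightarrow V$ replaced by $-\overrightarrow V$) is the global constant $1$ on all of $\Xi$. Hence the cokernel in $\mathcal H_\beta$ is one-dimensional from the outset, not three-dimensional, and the single solvability condition \eqref{cong_cond_g} arises directly from pairing with that global constant. The ``three outlets $\Rightarrow$ three modes'' heuristic applies to the \emph{index jump} of the operator family as $\beta$ crosses $0$ (one constant mode per outlet contributes to the change of index between $\mathcal H_\beta$ and $\mathcal H_{-\beta}$), not to the dimension of the cokernel itself. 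Once you correct this, your uniqueness argument and the Green-formula derivation of \eqref{cong_cond_g} go through exactly as you wrote.
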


The equality \eqref{cong_cond_g} for the problem \eqref{tilda_N_0_prob} is equivalent to
\begin{equation}\label{cong_cond}
  \sum_{i=1}^{3}  h_i^2(0) \,\mathrm{v}_i \,  w_0^{(i)}(0, t) = \breve{\varphi}^{(0)}(t),
\end{equation}
where
\begin{equation}\label{breve_phi}
\breve{\varphi}^{(0)}(t) := - \frac{1}{\pi}\int_{\Gamma_0} \varphi^{(0)}(\xi,t) \, d\sigma_\xi.
  \end{equation}

\subsubsection{Limit problem $($the first case: $v^{(1)}_1 < 0, v^{(2)}_2 < 0, v^{(3)}_3 > 0)$. Existence of solutions $\{u^{(i)}_1\}, \ N_0$}\label{sub_limit_problem}

When constructing an asymptotic approximation, it is not always immediately clear which boundary condition or initial condition the regular asymptotic coefficients must satisfy. In our case we have three partial differential equations \eqref{lim_0} of the first order, each of them is on the corresponding interval $I_i:=\{x: \ x_i\in (0,\ell_i), \ \overline{x_i}=(0,0)\}$ of the graph  $\mathcal{I};$ and they are connected by the Kirchhoff transmission condition \eqref{cong_cond}. In addition, we would like these solutions to satisfy three boundary conditions at $x_1 = \ell_1,$ $x_2 = \ell_2,$ $x_3 = \ell_3,$ respectively, and the  initial condition at $t= 0.$ Obviously, there is no solution to satisfy all these conditions.

Thus, we propose the  following
\textit{limit problem to \eqref{probl} for the first case}:
\begin{equation}\label{limit_prob}
 \left\{\begin{array}{rcll}
h^2_i(x_i)\, \partial_t{w}^{(i)}_0 + \partial_{x_i}\big( v_i^{(i)}(x_i)\,  h^2_i(x_i)\, w^{(i)}_0 \big) &=& - 2 h_i(x_i)\,  \widehat{\varphi}^{(i)}(x_i, t),& (x_i, t) \in I_i \times (0, T),  \ \ i \in \{1,2,3\},
 \\[3mm]
\sum_{i=1}^{3}  \mathrm{v}_i \, h_i^2(0) \,  w_0^{(i)}(0, t) & = & \breve{\varphi}^{(0)}(t),& t \in (0, T),
    \\[2mm]
    w_0^{(i)}(\ell_i,  t) & = & q_i(t), & t \in [0, T],   \ \ i \in \{1,2\},
\\[2mm]
    w_0^{(i)}(x_i,  0) & = & 0, & x_i  \in [0, \ell_i],  \ \ i \in \{1,2,3\}.
  \end{array}\right.
\end{equation}
This problem does not take into account  the boundary condition at $x_3=\ell_3.$

Let us prove its solvability. First, we consider the model mixed problem
\begin{equation}\label{prob_1}
 \left\{\begin{array}{rcll}
 \partial_t U(y,t) + \Lambda(y)\, \partial_{y} U(y,t)  &=& \Psi(y,t),& (y, t) \in (0, \ell) \times (0, T) ,
    \\[2mm]
    U(0,  t) & = & q(t), & t \in [0, T],
    \\[2mm]
    U(y, 0) & = & 0, & t \in [0, \ell],
    \end{array}\right.
\end{equation}
where the  coefficient $\Lambda(y), \ y\in [0, \ell],$  the given functions $\Psi(y,t), \ (y, t) \in [0, \ell] \times [0, T],$ and $q(t), \ t\in[0,T],$  are infinitely differentiable functions on their domains of definition,   and in addition,  $\Lambda >0.$

According to the theory of  first-order partial differential equations (see, e.g.,\cite{Myshkis_1960}), the mixed problem \eqref{prob_1} has a unique classical solution under special matching conditions. In our case, we can find a representation of the solution using the first integral method and derive matching conditions that will ensure the greater smoothness of the solution needed in the seque.l
The characteristic
$$
t = \mathcal{V}(y):= \int_{0}^{y} \frac{d\xi}{\Lambda(\xi)}, \quad y\in[0, \ell],
$$
of the corresponding characteristic system
$$
\frac{dy}{dt} = \Lambda (y), \quad \frac{du}{dt} = \Psi(y,t),
$$
divides the rectangle $(0, \ell) \times (0, T)$ into two domains, namely
$$
\mathfrak{D}_1 := \{(y,t)\colon y\in (0,\ell), \ \ t \in (0, \mathcal{V}(y))\} \quad \text{and} \quad \mathfrak{D}_2 := \big\{(0, \ell) \times (0, T)\big\} \setminus \overline{\mathfrak{D}_1}.
$$

Therefore,
\begin{equation}\label{model_solution}
  U(y,t) =
  \left\{\begin{array}{ll}
\displaystyle{ \int\limits_{0}^{t} \Psi\big(\mathcal{V}^{-1}(\tau + \mathcal{V}(y) - t) , \tau \big)\, d\tau,} & (y,t) \in \mathfrak{D}_1,
    \\[2mm]
    \displaystyle{ q\big(t - \mathcal{V}(y)\big) + \int\limits_{0}^{y} \frac{\Psi\big(\eta, \mathcal{V}(\eta) +t - \mathcal{V}(y)\big) }{\Lambda(\eta)} \, d\eta,}
 & (y,t) \in \mathfrak{D}_2,
    \end{array}\right.
\end{equation}
is a classical solution to the problem \eqref{prob_1} if it and its partial derivations of the first-order are continuous  on the curve $t= \mathcal{V}(y), \ y\in [0, \ell];$ as a result, we get the  matching conditions
\begin{equation}\label{match_cond_1}
q(0) =0 \quad \text{and} \quad q'(0) = \Psi(0,0).
\end{equation}
In \eqref{model_solution},  $\mathcal{V}^{-1}$ is the inverse function; it exists because $\mathcal{V}$ is strictly increasing on $[0, \ell].$
Also, we note that $\mathcal{V}(0)=0$ and $\mathcal{V}(y)>0$ for $y\in (0, \ell].$

For each $i \in \{1,2\}$ the problem
\begin{equation}\label{limit_prob_1_2}
 \left\{\begin{array}{rcll}
h^2_i(x_i)\, \partial_t{w}^{(i)}_0(x_i, t) + \partial_{x_i}\big( v_i^{(i)}(x_i)\,  h^2_i(x_i)\, w^{(i)}_0(x_i, t) \big) &=& - 2 h_i(x_i)\,  \widehat{\varphi}^{(i)}(x_i, t),& (x_i, t) \in I_i \times (0, T),
   \\[2mm]
    w_0^{(i)}(\ell_i,  t) & = & q_i(t), & t \in [0, T],
\\[2mm]
    w_0^{(i)}(x_i,  0) & = & 0, & x_i  \in [0, \ell_i],
  \end{array}\right.
\end{equation}
is reduced to \eqref{prob_1}  using the substitutions: $U(x_i,t) = v_i^{(i)}(x_i)\,  h^2_i(x_i)\, w^{(i)}_0(x_i, t)$ and $y= \ell_i - x_i.$ In this case
$$
\Psi(y,t) = - 2 v_i^{(i)}(\ell_i - y)\, h_i(\ell_i - y)\,  \widehat{\varphi}^{(i)}(\ell_i - y, t), \quad \Lambda(y) = - v_i^{(i)}(\ell_i - y) > 0, \quad q(t) = v_i^{(i)}(\ell_i)\,  h^2_i(\ell_i)\, q_i(t),
$$
and the unique classical solution is given by the formula \eqref{model_solution} if
the  matching conditions
\begin{equation*}
q_i(0) =0 \quad \text{and} \quad q'_i(0) =  - \frac{2}{h_i(\ell_i)} \, \widehat{\varphi}^{(i)}(\ell_i,0)
\end{equation*}
are satisfied. They are performed due to the assumptions \eqref{match_conditions}.

In addition, since the function ${\varphi}^{(i)}$  vanishes  uniformly with respect to $t$  and $\overline{\xi}_i$ in neighborhoods of the ends  of the closed interval  $[0, \ell_i]$ (the same for the function $\widehat{\varphi}^{(i)}$; it is determined in \eqref{hat_phi}), the problem \eqref{limit_prob_1_2} has the  infinitely differentiable solution  if
\begin{equation}\label{match_cond_3}
   \frac{d^k q_i }{dt^k}(0) =  0, \quad k\in \Bbb N.
\end{equation}

The solution to the problem \eqref{limit_prob_1_2}  is given by the formula
$$
 w^{(i)}_0(x_i, t) = \frac{1}{v_i^{(i)}(x_i)\,  h^2_i(x_i)}
$$
\begin{equation}\label{model_solution_w_0}
  \times
  \left\{\begin{array}{ll}
\displaystyle{ -2 \int\limits_{0}^{t} v_i^{(i)}(\zeta)\,  h^2_i(\zeta) \,  \widehat{\varphi}^{(i)}(\zeta, \tau)\Big|_{\zeta = \ell_i - \mathcal{V}^{-1}(\tau + \mathcal{V}(\ell_i - x_i) - t)} \, d\tau,} & (x_i,t) \in \mathfrak{B}_1,
    \\[2mm]
    \displaystyle{v_i^{(i)}(\ell_i)\,  h^2_i(\ell_i)\, q_i\big(t - \mathcal{V}(\ell_i - x_i)\big) + 2\int\limits_{0}^{\ell_i - x_i}
    h_i(\ell_i - \eta) \, \widehat{\varphi}^{(i)}\big(\ell_i - \eta,   \mathcal{V}(\eta) + t -  \mathcal{V}(\ell_i - x_i)\big) \, d\eta,}
 & (x_i,t) \in \mathfrak{B}_2,
    \end{array}\right.
\end{equation}
where
$$
\mathfrak{B}_1 := \big\{(x_i,t)\colon x_i\in (0,\ell_i), \ \ t \in \big(0, \mathcal{V}(\ell_i - x_i)\big)\big\} \quad \text{and} \quad \mathfrak{B}_2 := \big\{(0, \ell_i) \times (0, T) \big\}\setminus \overline{\mathfrak{B}_1}.
$$

 The problem for ${w}^{(3)}_0$ is as follows
\begin{equation}\label{limit_prob_w_3}
 \left\{\begin{array}{c}
h^2_3(x_3)\, \partial_t{w}^{(3)}_0(x_3, t) + \partial_{x_3}\big( v_3^{(3)}(x_3)\,  h^2_3(x_3)\, w^{(3)}_0(x_3, t) \big) = - 2 h_3(x_3)\,  \widehat{\varphi}^{(3)}(x_3, t), \qquad (x_3, t) \in I_3 \times (0, T),
 \\[2mm]
 {w}^{(3)}_0(0, t)   =   \displaystyle{\frac{1}{\mathrm{v}_3 \, h_3^2(0)}\Big(\breve{\varphi}^{(0)}(t)  -
\sum_{i=1}^{2}  \mathrm{v}_i \, h_i^2(0) \,  w_0^{(i)}(0, t) \Big)}, \qquad t \in (0, T),
 \\[4mm]
    w_0^{(3)}(x_3,  0)  =  0, \qquad x_3  \in [0, \ell_3].
  \end{array}\right.
\end{equation}
We see that the boundary condition for ${w}^{(3)}_0$ at $x_3 =0$ takes into account the Kirchhoff transmission condition \eqref{cong_cond} in the limit problem \eqref{limit_prob}.
After introducing a new function $U(y,t):=   v_3^{(3)}(y)\,  h^2_3(y)\, w^{(3)}_0(y, t) v(y),$ $ (y=x_3),$  the problem \eqref{limit_prob_w_3} is
reduced to the model  problem \eqref{prob_1} with
$$
\Psi(y,t) = - 2v_3^{(3)}(y)\,  h_3(y)\,  \widehat{\varphi}^{(3)}(y, t), \quad \Lambda(y) = v_3^{(3)}(y) > 0, \quad q(t) =
\breve{\varphi}^{(0)}(t) - \sum_{i=1}^{2}  \mathrm{v}_i \, h_i^2(0) \,  w_0^{(i)}(0, t).
$$
By virtue of \eqref{match_conditions} for $\varphi^{(0)}$ and the initial conditions for $w_0^{(1)}$ and $w_0^{(2)},$  the first matching condition in \eqref{match_cond_1} is satisfied. The second matching condition reads as follows
\begin{equation}\label{match_cond_4}
\frac{d\breve{\varphi}^{(0)}}{dt}(0) =  \sum_{i=1}^{2}  \mathrm{v}_i \, h_i^2(0) \, \partial_t w_0^{(i)}(0, t)\big|_{t=0},
\end{equation}
where the function $\breve{\varphi}^{(0)}$  is defined in \eqref{breve_phi}.
 It follows from \eqref{model_solution_w_0} and the assumptions {\bf A3} that
$$
\partial_t w_0^{(i)}(0, 0) = - 2  \, \widehat{\varphi}^{(i)}(0, 0) = 0.
$$
Therefore, the problem \eqref{limit_prob_w_3} has a unique classical solution if $\partial_t \breve{\varphi}^{(0)}(0) = 0.$ Moreover, it also follows from \eqref{model_solution_w_0} that $\partial^k_t w_0^{(i)}(0, 0) = 0$ for each $k\in \Bbb N$  if
\begin{equation}\label{match_cond_5+}
\frac{\partial^k\varphi^{(i)}(\xi,t)}{\partial t^k}\Big|_{t=0}= 0, \quad  k\in \Bbb N.
\end{equation}
Thus, for the infinite differentiability of $w_0^{(3)}$, we will require the conditions \eqref{match_cond_5+} and
\begin{equation}\label{match_cond_5}
\frac{\partial^k\varphi^{(0)}(\xi,t)}{\partial t^k}\Big|_{t=0}= 0, \quad \xi  \in \Gamma_0, \quad k\in \Bbb N.
\end{equation}
\begin{remark}\label{differ}
Since for each $k\in \Bbb N$ the coefficient $w_k^{(i)}$ is determined through the second derivative $w_{ k-1}^{(i)}$ (see \eqref{lim_2} and below), this leads  to the need for infinite differentiability of the solution $w_0^{(i)}.$
Therefore, we additionally assume that  the functions $\{h_i\}_{i=1}^3,$ all coefficients of the problem \eqref{probl}, the functions $\{\varphi^{(i)}, \ q_i\}_{i=1}^3$  belong to the class $C^\infty$ in their domains of definition, the function $\varphi^{(0)}$ is infinitely differentiable in $t\in [0, T],$ and relations \eqref{match_cond_3}, \eqref{match_cond_5+} and \eqref{match_cond_5} hold.
  \end{remark}

As a result, the limit problem \eqref{limit_prob} has a unique  solution and the solvability condition \eqref{cong_cond} for the problem \eqref{tilda_N_0_prob} is satisfied. This means that there exists a unique solution $N_0$ to the problem \eqref{N_0_prob}. The solvability condition \eqref{lim_0} also holds for the problem \eqref{eq_1} and \eqref{bc_1} and therefore, there  exists a unique solution $u_1^{(i)}$ that satisfies the condition  \eqref{uniq_1}. In addition,
\begin{equation}\label{zero_t_0}
  N_0\big|_{t=0} \equiv \widetilde{N}_0\big|_{t=0} \equiv 0, \quad u_1^{(i)}\big|_{t=0} \equiv 0, \quad i\in \{1, 2, 3\}.
\end{equation}

Thus, the first coefficients both of the regular asymptotics and the inner one are successfully determined.

Furthermore, we can obtain additional information above $\dot{{N}}_0 := \partial_t N_0.$ Clearly that  $\dot{\widetilde{N}}_0$ is a solution of the problem \eqref{tilda_N_0_prob} with the right-hand sides $\partial_t \varphi^{(0)}$ and $\partial_t f_0^{(i)},$ respectively, and the solvability condition
$$
 \sum_{i=1}^{3}  h_i^2(0) \,\mathrm{v}_i \,  \partial_t w_0^{(i)}(0, t) = \partial_t \breve{\varphi}^{(0)}(t)
$$
is satisfied. Since $\partial_t \varphi^{(0)},$ $\partial_t f_0^{(i)},$ and $\partial_t \breve{\varphi}^{(0)}$ vanish at $t=0,$
the function  $\dot{\widetilde{N}}_0$ also vanishes at $t=0,$ and hence $\dot{{N}}_0\big|_{t=0} \equiv 0.$ Similarly, we conclude that
$\partial^2_t N_0\big|_{t=0} \equiv 0.$

In addition, since the right-hand sides  in the problem \eqref{tilda_N_0_prob} are uniformly bounded with respect to $(\xi, t)\in \Xi\times [0, T]$ and have compact supports,  the solution to the problem~\eqref{N_0_prob} has  the following  asymptotics uniform with respect to $t\in [0, T]$:
\begin{equation}\label{rem_exp-decrease+0}
N_0(\xi,t) = w^{(i)}_{0}(0,t)  +  \mathcal{ O}(\exp(-\beta_0\xi_i))
\quad \mbox{as} \ \ \xi_i\to+\infty,  \ \  \xi  \in \Xi^{(i)},  \quad i=\{1,2,3\} \quad (\beta_0 >0).
\end{equation}
Here, the symbol $\mathcal{ O}(\cdot)$ is big O notation.

\subsubsection{Existence of solutions $\{w^{(i)}_1\}_{i=1}^3, \, \{u^{(i)}_2\}_{i=1}^3$ and $N_1$}\label{par_222}
The problem for $\widetilde{N}_1$ (see \eqref{tilda_N_k_prob} and \eqref{junc_probl_general+cond}) is as follows
\begin{equation}\label{tilda_N_1_prob}
\left\{\begin{array}{rcll}
  -   \mathrm{div}_\xi \big( \mathbb{D}^{(0)}(\xi) \nabla_\xi \widetilde{N}_1(\xi,t)  +
   \overrightarrow{V}(\xi) \cdot \nabla_\xi \widetilde{N}_1(\xi,t)
  & = &  - \dot{{N}}_{0}(\xi,t), &
    \xi \in\Xi^{(0)},
\\[2mm]
 -   \mathrm{div}_\xi \big( \mathbb{D}^{(i)}(\xi) \nabla_\xi \widetilde{N}_1 \big) +
  \mathrm{v}_i \, \partial_{\xi_i} \widetilde{N}_1  & = & f_1^{(i)}(\xi_i,t)   - \dot{{N}}_{0}(\xi,t), &
     \xi \in\Xi^{(i)},
\\[2mm]
\sigma_\xi(\widetilde{N}_1)  &=&  0, &
   \xi \in \partial\Xi,
\\[2mm]
\widetilde{N}_1(\xi)  &\rightarrow &  0
   \ \  \text{as} \ \  \xi_i \to +\infty, & \xi  \in \Xi^{(i)}, \  i\in \{1,2,3\},
\end{array}\right.
\end{equation}
where $f_1^{(i)}$ is defined by \eqref{F_1} and then rewritten in a more convenient form
\begin{align}\label{F_1+}
f_1^{(i)}(\xi_i,t)  =
 & \,  a^{(i)}_{ii} \Big( w^{(i)}_1(0,t) \, \chi''_{\ell_0}(\xi_i) + \dfrac{\partial w_0^{(i)}}{\partial x_i}(0,t)\,  \big(\xi_i \chi_{\ell_0}^{\prime}(\xi_i)\big)^\prime
 \Big) \notag
\\
&
 - \mathrm{v}_i  \,  w^{(i)}_1(0,t) \, \chi'_{\ell_0}(\xi_i) - \mathrm{v}_i   \dfrac{\partial w_0^{(i)}}{\partial x_i}(0,t) \, \xi_i\,  \chi^\prime_{\ell_0}(\xi_i)
+ a^{(i)}_{ii} \, \dfrac{\partial w_0^{(i)}}{\partial x_i}(0,t) \, \chi_{\ell_0}^{\prime}(\xi_i)
 \notag
\\
 & - \mathrm{v}_i  \, \dfrac{\partial w_0^{(i)}}{\partial x_i}(0,t)\,  \chi_{\ell_0}(\xi_i).
\end{align}
The integrals over $\Xi^{(i)}$ of  summands in the first line of \eqref{F_1+} are equal to zero;
the integrals over $\Xi^{(i)}$ of  summands in the second  line  are equal to
$$
-  \pi\, h^2_i(0) \,\mathrm{v}_i  \,  w^{(i)}_1(0,t)   - \pi\, h^2_i(0) \, \mathrm{v}_i   \dfrac{\partial w_0^{(i)}}{\partial x_i}(0,t) \int_{\ell_0+1}^{\ell_0+2} \xi_i\,  \chi^\prime_{\ell_0}(\xi_i) \, d\xi_i
+ \pi\, h^2_i(0)\, a^{(i)}_{ii} \, \dfrac{\partial w_0^{(i)}}{\partial x_i}(0,t).
 $$
The last summand in \eqref{F_1+} should be integrated with $ \dot{{N}}_{0}$ and
$$
\Big|\int_{\Xi^{(i)}} \Big(\mathrm{v}_i  \, \dfrac{\partial w_0^{(i)}}{\partial x_i}(0,t)\,  \chi_{\ell_0}(\xi_i) + \dot{{N}}_{0}(\xi,t) \Big)\, d\xi  \Big| = \Big|\int_{\Xi^{(i)}} \dot{\widetilde{N}}_{0}(\xi,t) \, d\xi  \Big|  < +\infty,
$$
since
 $\mathrm{v}_i  \, \partial_{x_i} w_0^{(i)}(0,t) + \partial_t w_0^{(i)}(0,t) =0 $ and $\dot{\widetilde{N}}_{0} \in \mathcal{H}_\beta$.

Due to Proposition~\ref{Prop-2-1}  the necessary and sufficient condition for the solvability of the problem \eqref{tilda_N_1_prob}
is as follows
$$
\sum_{i=1}^{3}  h_i^2(0) \,\mathrm{v}_i \,  w_1^{(i)}(0, t) = {\bf d}_1(t),
$$
where
\begin{align}\label{d_1}
 {\bf d}_1(t) :=&  - \frac{1}{\pi}\int_{\Xi^{(0)}} \dot{{N}}_{0}(\xi,t)\, d\xi - \frac{1}{\pi} \sum_{i=1}^{3}\int_{\Xi^{(i)}} \dot{\widetilde{N}}_{0}(\xi,t) \, d\xi
\notag
\\
 & - \sum_{i=1}^{3}h^2_i(0) \Big(\mathrm{v}_i \,  \partial_{x_i}w_0^{(i)}(0,t) \int_{\ell_0+1}^{\ell_0+2} \xi_i\,  \chi^\prime_{\ell_0}(\xi_i) \, d\xi_i -  a^{(i)}_{ii} \, \partial_{x_i}w_0^{(i)}(0,t) \Big).
\end{align}

Again as in \S~\ref{sub_limit_problem}, with the help of the formula \eqref{model_solution}  we can obtain a unique solution to the problem
\begin{equation}\label{prob_w_1}
 \left\{\begin{array}{rcll}
h^2_i(x_i)\, \partial_t{w}^{(i)}_1 + \partial_{x_i}\big( v_i^{(i)}(x_i)\,  h^2_i(x_i)\, w^{(i)}_1 \big) &=& {\bf g}_1^{(i)}(x_i,t),& (x_i, t) \in I_i \times (0, T),  \ \ i \in \{1,2,3\},
 \\[3mm]
\sum_{i=1}^{3}  \mathrm{v}_i \, h_i^2(0) \,  w_1^{(i)}(0, t) & = & {\bf d}_1(t), & t \in (0, T),
 \\[2mm]
    w_1^{(i)}(\ell_i,  t) & = & 0, & t \in [0, T],   \ \ i \in \{1,2\},
\\[2mm]
    w_1^{(i)}(x_i,  0) & = & 0, & x_i  \in [0, \ell_i],  \ \ i \in \{1,2,3\},
  \end{array}\right.
\end{equation}
where
$$
{\bf g}_1^{(i)}(x_i,t) := a_{ii}^{(i)} \partial_{x_i}\big(h^2_i(x_i)\, \partial_{x_i} w_0^{(i)}(x_i,t)\big)
-  v_i^{(i)}(x_i) \, \partial_{x_i}\big(h^2_i(x_i)\big) \,  \widehat{u}^{(i)}_1(x_i, t).
$$
Thanks to the additional assumptions made in Remark~\ref{differ}, all matching conditions needed for the infinite differentiability of the solutions $\{{w}^{(i)}_1\}_{i=1}^3$ are  satisfied.

Thus, the solvability condition  for the problem \eqref{tilda_N_1_prob} is satisfied and there exists a unique solution $N_1$ to the problem \eqref{N_k_prob} $(k=1).$
Also the solvability condition \eqref{lim_2} at $k=1$  holds for the problem  \eqref{eq_3}, \eqref{bc_3} and \eqref{uniq_1} $(k=1)$,
and therefore, there  exists a unique solution $u_{2}^{(i)}.$
 In addition, it is easy to verify that
\begin{equation*}
  N_1\big|_{t=0} \equiv \widetilde{N}_1\big|_{t=0} \equiv 0, \quad \partial_t N_1\big|_{t=0} \equiv 0, \quad u_2^{(i)}\big|_{t=0} \equiv 0, \quad i\in \{1, 2, 3\}.
\end{equation*}

Since the right-hand sides  in the problem \eqref{tilda_N_1_prob} are uniformly bounded with respect to $(\xi, t)\in \Xi\times [0, T]$ and $\dot{\widetilde{N}}_{0}$ has the  asymptotics \eqref{rem_exp-decrease+0},  the solution to the problem~\eqref{N_k_prob} $(k=1)$ has  the following  asymptotics uniform with respect to $t\in [0, T]$:
\begin{equation}\label{rem_exp-decrease+1}
N_1(\xi,t) = w^{(i)}_{1}(0,t)  +  \Psi^{(i)}_{1}(\xi_i,t) + \mathcal{ O}(\exp(-\beta_0\xi_i))
\quad \mbox{as} \ \ \xi_i\to+\infty,  \ \  \xi  \in \Xi^{(i)},  \quad i=\{1,2,3\} \quad (\beta_0 >0).
\end{equation}


\subsubsection{Existence of solutions $\{w^{(i)}_k\}_{i=1}^3, \, \{u^{(i)}_{k+1}\}_{i=1}^3$ and $N_k$, $k\ge 2$}\label{par_223}

Assume that all the coefficients $\{w^{(i)}_p\}_{i=1}^3,$ $ p\in \{0,1,\ldots, k-1\},$ $\{u^{(i)}_p\}_{i=1}^3, \, p\in \{1,\ldots, k\},$ and
$\{N_p\}_{p=0}^{k-1}$  are determined and  that they and  $\partial_t N_{k-1}$   vanish at $t=0.$

Due to Proposition~\ref{Prop-2-1}  the necessary and sufficient condition for the solvability of the problem \eqref{tilda_N_k_prob}
is as follows
$$
\sum_{i=1}^{3}  h_i^2(0) \,\mathrm{v}_i \,  w_k^{(i)}(0, t) = {\bf d}_k(t),
$$
where
\begin{align}\label{d_k}
 {\bf d}_k(t) :=&  - \frac{1}{\pi}\int_{\Xi^{(0)}} \dot{{N}}_{k-1}(\xi,t)\, d\xi - \frac{1}{\pi} \sum_{i=1}^{3}\int_{\Xi^{(i)}} \dot{\widetilde{N}}_{k-1}(\xi,t) \, d\xi
\notag
\\
 & - \sum_{i=1}^{3} h^2_i(0) \Big(\mathrm{v}_i  \int_{\ell_0+1}^{\ell_0+2} \Psi^{(i)}_{k}(\xi_i,t) \,  \chi^\prime_{\ell_0}(\xi_i) \, d\xi_i -  a^{(i)}_{ii} \, \int_{\ell_0+1}^{\ell_0+2} \big(\Psi^{(i)}_{k}(\xi_i,t)\big)' \,  \chi^\prime_{\ell_0}(\xi_i) \, d\xi_i \Big),
\end{align}
and the function $\Psi^{(i)}_{k}$ is determined in \eqref{Psi_k}.

Using  \eqref{model_solution} similar as in \S~\ref{sub_limit_problem}, we get a unique smooth solution to the problem
\begin{equation}\label{prob_w_k}
 \left\{\begin{array}{rcll}
h^2_i(x_i)\, \partial_t{w}^{(i)}_k + \partial_{x_i}\big( v_i^{(i)}(x_i)\,  h^2_i(x_i)\, w^{(i)}_k\big) &=& {\bf g}_k^{(i)}(x_i,t),& (x_i, t) \in I_i \times (0, T),  \ \ i \in \{1,2,3\},
 \\[3mm]
\sum_{i=1}^{3}  \mathrm{v}_i \, h_i^2(0) \,  w_k^{(i)}(0, t) & = & {\bf d}_k(t), & t \in (0, T),
 \\[2mm]
    w_k^{(i)}(\ell_i,  t) & = & 0, & t \in [0, T],   \ \ i \in \{1,2\},
\\[2mm]
    w_k^{(i)}(x_i,  0) & = & 0, & x_i  \in [0, \ell_i],  \ \ i \in \{1,2,3\},
  \end{array}\right.
\end{equation}
where
\begin{align*}
{\bf g}_k^{(i)}(x_i,t) := & \ a_{ii}^{(i)} \partial_{x_i} \big(h^2_i(x_i)\, \partial_{x_i} w_{k-1}^{(i)}(x_i,t)\big)
-  v_i^{(i)}(x_i) \, \partial_{x_i}\big(h^2_i(x_i)\big) \,  \widehat{u}^{(i)}_k(x_i, t)
\\
&+ a_{ii}^{(i)} \partial_{x_i}\big(h^2_i(x_i)\big) \,   \partial_{x_i}\widehat{u}^{(i)}_{k-1}(x_i, t) -  2 h_i(x_i)\, \eta_k^{(i)}(x_i) \,  \widehat{\varphi}^{(i)}(x_i, t),
\end{align*}
and the function $\widehat{u}^{(i)}_k$ is determined in \eqref{hat_u_k}.

Thus, the solvability condition  for the problem \eqref{tilda_N_k_prob} is satisfied and there exists a unique solution $N_k$ to the problem \eqref{N_k_prob}.  In addition, the right-hand sides of the problem \eqref{tilda_N_k_prob} are equal to zero at $t=0.$ As a result,
$ N_k\big|_{t=0} \equiv \widetilde{N}_k\big|_{t=0} \equiv 0.$ Similarly as at the end of  \S~\ref{sub_limit_problem}, we deduce that
$\partial_t N_k\big|_{t=0} \equiv 0.$
 Also the solvability condition \eqref{lim_2} holds for the problem  \eqref{eq_3}, \eqref{bc_3} and \eqref{uniq_1},
and therefore, there  exists a unique solution $u_{k+1}^{(i)}$; moreover,  as is easy to see, it vanishes at $t=0.$

By the same argumentations as for \eqref{rem_exp-decrease+0} and \eqref{rem_exp-decrease+1},   we can state that  the solution to the problem~\eqref{N_k_prob}  has  the following  asymptotics uniform with respect to $t\in [0, T]$:
\begin{equation}\label{rem_exp-decrease}
N_k(\xi,t) = w^{(i)}_{k}(0,t) + \Psi^{(i)}_{k}(\xi_i,t)  +  \mathcal{ O}(\exp(-\beta_0\xi_i))
\quad \mbox{as} \ \ \xi_i\to+\infty,  \ \  \xi  \in \Xi^{(i)},  \quad i=\{1,2,3\} \quad (\beta_0 >0).
\end{equation}

\subsubsection{Limit problem $($the second case: $v^{(1)}_1 < 0,\  v^{(2)}_2 >  0, \  v^{(3)}_3 > 0 )$}\label{sub_limit_problem+}

As follows from \cite{Bar_Le Roux_1979}, the boundary condition is not necessarily satisfied for $w_0^{(2)}$ and $w_0^{(3)}$ for $x_2=\ell_2$ and $x_3 =\ell_3, $ accordingly, even if we consider a weak solution that satisfies the so-called ``entropy-flow boundary pairs'' condition (see also \cite{Martin, Otto}). Therefore, we consider in this case the following limit problem:
\begin{equation}\label{limit_prob_2_case}
 \left\{\begin{array}{rcll}
h^2_i(x_i)\, \partial_t{w}^{(i)}_0 + \partial_{x_i}\big( v_i^{(i)}(x_i)\,  h^2_i(x_i)\, w^{(i)}_0 \big) &=& - 2 h_i(x_i)\,  \widehat{\varphi}^{(i)}(x_i, t),& (x_i, t) \in I_i \times (0, T),  \ \ i \in \{1,2,3\},
 \\[3mm]
\sum_{i=1}^{3}  \mathrm{v}_i \, h_i^2(0) \,  w_0^{(i)}(0, t) & = & \breve{\varphi}^{(0)}(t),& t \in (0, T),
    \\[2mm]
    w_0^{(1)}(\ell_1,  t) & = & q_1(t), & t \in [0, T],
    \\[2mm]
    w_0^{(i)}(x_i,  0) & = & 0, & x_i  \in [0, \ell_i],  \ \ i \in \{1,2,3\}.
  \end{array}\right.
\end{equation}

The function $w_0^{(1)}$ is the smooth solution to the problem \eqref{limit_prob_1_2}
 and it is defined  by the formula \eqref{model_solution_w_0}. Compared to the first case, the dynamics at
the  vertex is  not uniquely determined by the mass conservation condition \eqref{cong_cond}.  We propose an additional relation between the incoming solution $w_0^{(1)}$ and the outgoing solution $w_0^{(2)}$ at the vertex  of the graph $\mathcal{I},$ namely
\begin{equation}\label{cont_condition}
w_0^{(2)}(x_2,t)\big|_{x_2 =0} =  w_0^{(1)}(0,t), \quad t \in [0,T]
\end{equation}
 (our choice is argued and discussed for general graphs  in the Conclusions). In virtue of the additional assumptions in Remark~\ref{differ},  there exists a smooth solution to the problem
 \begin{equation}\label{limit_prob_2+}
 \left\{\begin{array}{rcll}
h^2_2(x_2)\, \partial_t{w}^{(2)}_0(x_2, t) + \partial_{x_2}\big( v_i^{(2)}(x_2)\,  h^2_2(x_2)\, w^{(2)}_0(x_2, t) \big) &=& - 2 h_2(x_2)\,  \widehat{\varphi}^{(2)}(x_2, t),& (x_2, t) \in I_2 \times (0, T),
   \\[2mm]
    w_0^{(2)}(0,  t) & = & w_0^{(1)}(0,t), & t \in [0, T],
\\[2mm]
    w_0^{(2)}(x_2,  0) & = & 0, & x_2  \in [0, \ell_2],
  \end{array}\right.
\end{equation}
and it is determined with help of \eqref{model_solution}. Further,  as in the first case, we determine the function $w_0^{(3)}$ (see the problem \eqref{limit_prob_w_3}).

For the following terms, we repeat all  calculations and argumentations from \S~\ref{par_222} and \S~\ref{par_223}, with an amendment to determine the coefficient $w_k^{(2)}.$ First we determine  $w_k^{(1)}$ as a smooth solution to the corresponding differential equation (see \eqref{prob_w_k}) in $I_1\times (0, T)$ with the conditions $w_k^{(1)}\big|_{x_1=\ell_1}=0$ and $w_k^{(1)}\big|_{t=0}=0;$ then the  coefficient $w_k^{(2)}$ but with the conditions $w_k^{(2)}\big|_{x_2=0}= w_k^{(1)}(0,t)$ and $w_k^{(2)}\big|_{t=0}=0;$ and at the end we determine the  coefficient $w_k^{(3)}$ as in the first case.

%
\subsection{Boundary-layer part of the asymptotics at the base $\Upsilon_{\varepsilon}^{(3)} (\ell_3)$}\label{subsec_Bound_layer}

In Subsection~\ref{regular_asymptotic}, we considered the
regular asymptotics taking into account the inhomogeneity on the lateral surfaces  of the thin cylinders $\{\Omega^{(i)}_\varepsilon\}_{i=1}^3$ and the boundary conditions on the bases $\Upsilon_{\varepsilon}^{(1)} (\ell_1)$ and
$\Upsilon_{\varepsilon}^{(2)} (\ell_2)$ for the first case:  $v^{(1)}_1 < 0,$ $ v^{(2)}_2 < 0,$ $ v^{(3)}_3 > 0 $.  In what follows, we construct the boundary layer part of the asymptotics compensating the residuals of the regular one at the base $\Upsilon_{\varepsilon}^{(3)} (\ell_3)$ of the thin cylinder $\Omega^{(3)}_\varepsilon$.

We seek it in the form
\begin{equation}\label{prim+}
\sum\limits_{k=0}^{+\infty}\varepsilon^{k} \, \Pi_k^{(3)}\left(\frac{{x}_1}{\varepsilon}, \frac{{x}_2}{\varepsilon}, \frac{\ell_3-x_3}{\varepsilon}, t\right)
\end{equation}
in a neighborhood of $\Upsilon_{\varepsilon}^{(3)} (\ell_3).$

We additionally assume that component $v_3^{(3)}$ of the vector-valued function $\overrightarrow{V_\varepsilon}^{(3)}$ are independent of the variable $x_3$ in a neighborhood of $\Upsilon_{\varepsilon}^{(3)}(\ell_3),$ i.e.,
$$
\overrightarrow{V_\varepsilon}^{(3)} = \big( 0, 0, v_3^{(3)}(\ell_3) \big)
$$
in a neighborhood of $\Upsilon_{\varepsilon}^{(3)} (\ell_3).$ This is a technical assumption. In the general case, the function  $v_3^{(3)}$ need to be expanded in terms of  Taylor series in a neighborhood of the point $x_3=\ell_3.$

Substituting the series \eqref{prim+}  into \eqref{probl}  and collecting  coefficients at the same powers of $\varepsilon$,
we get the following mixed boundary value problems:
 \begin{equation}\label{prim+probl+0}
 \left\{\begin{array}{rcll}
    \mathrm{div}_{\bar{\xi}_3} \big( \tilde{\mathbb{D}}^{(3)}_{\bar{\xi}_3} \nabla_{\bar{\xi}_3}\Pi_0^{(3)}\big)
  + a_{33}^{(3)} \, \partial^2_{\xi_3 \xi_3}\Pi_0^{(3)} + v_3^{(3)}(\ell_3) \partial_{\xi_3}\Pi_0^{(3)}
  & =    & 0,
   & \quad \xi\in \mathfrak{C}_+^{(3)},
   \\[2mm]
  \partial_{\nu_{\overline{\xi}_3}} \Pi_0^{(3)}(\xi,t) & =
   & 0,
   & \quad \xi\in \mathfrak{C}_+^{(3)},
   \\[2mm]
  \Pi_0^{(3)}(\overline{\xi}_3,0,t) & =
   & \Phi_0(t),
   & \quad \overline{\xi}_3\in\Upsilon_3(\ell_3),
   \\[2mm]
  \Pi_0^{(3)}(\xi,t) & \to
   & 0,
   & \quad \xi_3\to+\infty,
 \end{array}\right.
\end{equation}
where $\Phi_0(t) = q_{3}(t) - w_{0}^{(3)}(\ell_3,t),$ \
the matrix $\tilde{\mathbb{D}}^{(3)}$ is determined in~\eqref{mat-2D},  \
$\xi=(\xi_1, \xi_2, \xi_3),$ $\xi_3 = \frac{\ell_3-x_3}{\varepsilon},$ $\overline{\xi}_3 =(\xi_1, \xi_2) = \frac{\overline{x}_3}{\varepsilon},$
$$
\mathfrak{C}_+^{(3)}:=\big\{\xi \colon \  \overline{\xi}_3\in\Upsilon_3(\ell_3), \quad \xi_3\in(0,+\infty)\big\};
$$
and
\begin{equation}\label{prim+probl+k}
 \left\{\begin{array}{rcll}
    \mathrm{div}_{\bar{\xi}_3} \big( \tilde{\mathbb{D}}^{(3)}(\bar{\xi}_3) \nabla_{\bar{\xi}_3}\Pi_k^{(3)}\big)
  + a_{33}^{(3)} \, \partial^2_{\xi_3 \xi_3}\Pi_k^{(3)} + v_3^{(3)}(\ell_3) \partial_{\xi_3}\Pi_k^{(3)}
  & =    & \partial_t \Pi_{k-1}^{(3)} ,
   & \quad \xi\in \mathfrak{C}_+^{(3)},
   \\[2mm]
   \partial_{\nu_{\overline{\xi}_3}} \Pi_k^{(3)}(\xi) & =
   & 0,
   & \quad \xi\in \mathfrak{C}_+^{(3)},
   \\[2mm]
  \Pi_k^{(3)}(\overline{\xi}_3,0,t) & =
   & \Phi_k(t),
   & \quad \overline{\xi}_3\in\Upsilon_3(\ell_3),
   \\[2mm]
  \Pi_k^{(3)}(\xi,t) & \to
   & 0,
   & \quad \xi_3\to+\infty,
 \end{array}\right.
\end{equation}
where $\Phi_k(t) =  - \, w_{k}^{(3)}(\ell_3,t),  \  k\in \Bbb N.$

Using  the  Fourier method, it is easy to find
\begin{equation}\label{Pi_0}
\Pi_0^{(3)}(\xi_3,t) =
\Phi_0(t) \, e^{- \lambda \, \xi_3} , \ \ \text{where} \ \ \lambda := \frac{v_3^{(3)}(\ell_3)}{a_{33}^{(3)}}.
\end{equation}
Since  $v_3^{(3)}(\ell_3) > 0$ and $a_{33}^{(3)} >0,$ the solution $\Pi_0^{(3)}$ tends exponentially to zero as $\xi_3\to+\infty$ uniformly with respect to $t\in [0, T].$ Other coefficients are also independent of the variables $\bar{\xi}_3$ and they can be determined step-by-step from the following sequence of  problems
\begin{equation}\label{prim+probl+k+F}
 \left\{\begin{array}{rcll}
     \partial^2_{\xi_3 \xi_3}\Pi_k^{(3)}(\xi_3,t) + \lambda \, \partial_{\xi_3}\Pi_k^{(3)}(\xi_3,t)
  & =    & \frac{1}{a_{33}^{(3)}} \, \partial_t \Pi_{k-1}^{(3)}(\xi_3,t) ,
   & \quad \xi_3 \in (0, +\infty),
   \\[2mm]
  \Pi_k^{(3)}(0,t) & =   & \Phi_k(t),
   &
   \\[2mm]
  \Pi_k^{(3)}(\xi_3,t) & \to
   & 0,
   & \quad \xi_3\to+\infty; \quad k \in \Bbb N.
 \end{array}\right.
\end{equation}
For example, the problem
\begin{equation}\label{prim+probl+1+F}
 \left\{\begin{array}{rcll}
     \partial^2_{\xi_3 \xi_3}\Pi_1^{(3)}(\xi_3,t) + \lambda \, \partial_{\xi_3}\Pi_1^{(3)}(\xi_3,t)
  & =    & -\frac{1}{a_{33}^{(3)}} \, \dot{\Phi}_0(t)\,  e^{-\lambda \xi_3},
   & \quad \xi_3 \in (0, +\infty),
   \\[2mm]
  \Pi_1^{(3)}(0,t) & =   & \Phi_1(t),
   &
   \\[2mm]
  \Pi_1^{(3)}(\xi_3,t) & \to
   & 0,
   & \quad \xi_3\to+\infty,
 \end{array}\right.
\end{equation}
has the solution
$$
\Pi_1^{(3)}(\xi_3,t) = \Big(\Phi_1(t) +   \tfrac{1}{\lambda \, a_{33}^{(3)}} \, \dot{\Phi}_0(t)\, \xi_3 \Big) \,  e^{-\lambda \xi_3}.
$$

Thus, we can uniquely define all coefficients of the series \eqref{prim+} and for each $k \in \Bbb N_0$
\begin{equation}\label{exp_decay}
  \Pi_k^{(3)}(\xi_3,t) = \mathcal{O}\big(e^{-\lambda \xi_3}\big) \quad \text{as} \quad \xi_3 \to +\infty
\end{equation}
uniformly with respect to $t\in [0, T].$ Obviously, one obtains for  each $k \in \Bbb N_0$
\begin{equation}\label{in_cond}
  \Pi_k^{(3)}\big|_{t=0} \equiv 0 .
\end{equation}

In the second case, we similarly construct boundary asymptotics
\begin{equation}\label{prim++}
\sum\limits_{k=0}^{+\infty}\varepsilon^{k} \, \Pi_k^{(2)}\left(\frac{\ell_2-x_2}{\varepsilon},  t\right)
\end{equation}
in a neighborhood of $\Upsilon_{\varepsilon}^{(2)} (\ell_2)$ with the same properties.

\section{Formal asymptotic solution to the problem \eqref{probl}}\label{Sec:justification}

With the results from  the previous section  we  can successively determine all coefficients of the series (\ref{regul}), (\ref{inner_part}) and (\ref{prim+}) determining the complete solution. Precisely, we construct the following series:

\begin{equation}\label{asymp_expansion}
    \mathcal{U}(x, t; \varepsilon) :=
        \sum\limits_{k = 0}^{+\infty} \varepsilon^k
        \Big(
            \widehat{u}_k (x, t; \,\varepsilon)
          + \widehat{N}_k (x,t; \,\varepsilon)
          + \widehat{\Pi}_k (x, t;\,\varepsilon)
        \Big), \quad
    x\in\Omega_\varepsilon.
\end{equation}
where
\begin{gather*}
    \widehat{u}_k (x, t; \,\varepsilon)
     := \sum\limits_{i=1}^3
        \chi_{\ell_0}^{(i)} \left(\frac{x_i}{\varepsilon^\gamma}\right)
        \Big(
            w_k^{(i)} (x_i, t)
          + u_k^{(i)} \Big( x_i, \frac{\overline{x}_i}{\varepsilon}, t \Big)
        \Big), \quad
   (u_0 \equiv 0),
\\
    \widehat{N}_k (x,t; \, \varepsilon)
     := \bigg(
            1
          - \sum\limits_{i = 1}^3
            \chi_{\ell_0}^{(i)} \left(\frac{x_i}{\varepsilon^\gamma}\right)
        \bigg)
        N_k \Big( \frac{x}{\varepsilon}, t \Big),
\\
    \widehat{\Pi}_k (x, t; \varepsilon)
     := \chi_\delta^{(3)} (x_3) \,
        \Pi_k^{(3)}
        \Big(\frac{\ell_3 - x_3}{\varepsilon}, t
        \Big), \quad
        k\in\mathbb{N}_0,
\end{gather*}
$\gamma$ is a fixed number from the interval $(\frac23, 1),$
$\chi_\delta^{(3)}, \  \chi_{\ell_ 0}^{(i)}$
are smooth cut-off functions defined by the formulas
\begin{equation}\label{cut-off_functions}
\chi_\delta^{(3)} (x_3) =
    \left\{
    \begin{array}{ll}
        1, & \text{if} \ \ x_3 \ge \ell_3 -  \delta,
    \\
        0, & \text{if} \ \ x_3 \le \ell_3 - 2\delta,
    \end{array}
    \right.
\qquad
\chi_{\ell_0}^{(i)} (x_i) =
\left\{\begin{array}{ll}
1, &  \ \ x_i \ge 3 \, \ell_0,
\\
0, &  \ \ x_i \le 2 \, \ell_0,
\end{array}\right.
\quad i \in \{1, 2, 3\},
\end{equation}
and $\delta$ is a sufficiently small fixed positive number such that $\chi_\delta^{(3)}$ vanishes in the support of $\varphi_\varepsilon^{(3)}.$

Here we again consider the first case; in the second case, one should add the asymptotics of the boundary layer near the base of the thin cylinder $\Omega^{(2)} _\varepsilon$ multiplying by the corresponding cut-off function.

For $M\in \Bbb N$, denote by
\begin{equation}\label{aaN}
\mathcal{U}_{M}(x, t;\varepsilon)
 :=  \sum\limits_{k=0}^{M} \varepsilon^{k}
    \Big(
            \widehat{u}_k (x, t; \,\varepsilon)
          + \widehat{N}_k (x,t; \,\varepsilon)
          + \widehat{\Pi}_k (x, t;\,\varepsilon)
        \Big), \quad
    x\in\Omega_\varepsilon
    \end{equation}
 the partial sum of $(\ref{asymp_expansion}).$ Obviously,
 $$
 \mathcal{U}_{M}\big|_{t=0} = 0, \qquad \mathcal{U}_{M}\big|_{x_i=\ell_i} = q_i(t), \quad i\in \{1, 2, 3\}.
 $$

 In $\Omega^{(i)}_{\varepsilon, \gamma} := \Omega^{(i)} _\varepsilon \cap \{x\colon x_i \in [3 \ell_0 \varepsilon^\gamma, \ell_i) \},$ $i\in \{1, 2\},$
 \begin{equation}\label{sum_1}
   \mathcal{U}_{M}(x, t;\varepsilon) = \sum\limits_{k=0}^{M} \varepsilon^{k}
 \Big( w_k^{(i)} (x_i, t) + u_k^{(i)}\Big( x_i, \frac{\overline{x}_i}{\varepsilon}, t \Big)\Big)
 \end{equation}
 and due to the  equations \eqref{eq_1} and \eqref{eq_3} for $\{u_k^{(i)}\}$ this partial sum satisfies the differential equation
 \begin{equation}\label{Res_1}
    \partial_t\,\mathcal{U}_{M} -  \varepsilon\, \mathrm{div}_x \big( \mathbb{D}^{(i)}_\varepsilon(x) \nabla_x \mathcal{U}_{M}\big) +
  \mathrm{div} \big( \overrightarrow{V_\varepsilon}(x) \, \mathcal{U}_{M}\big)
       = \varepsilon^M\, \mathcal{R}^{(i)}_M \quad \text{in} \ \ \Omega^{(i)}_{\varepsilon, \gamma}\times (0,T)
 \end{equation}
where
\begin{multline}\label{Res_1+}
  \mathcal{R}^{(i)}_M(x_i,\bar{\xi}_i, t) =  -
        a_{ii}^{(i)}
        \Big(
            w^{(i)}_{M-1}(x_i,t)
            +
            u^{(i)}_{M-1}(x_i, \bar{\xi}_i,t)
        \Big)^{\prime\prime} + \partial_t{w}^{(i)}_M(x_i,t) + \partial_t {u}^{(i)}_{M}(x_i, \bar{\xi}_i,t)
\\
        +
        \Big(
            v^{(i)}(x_i) \,
            \big[ w^{(i)}_M(x_i,t) + u^{(i)}_{M}(x_i, \bar{\xi}_i,t) \big]
        \Big)^\prime
        +
        \mathrm{div}_{\bar{\xi}_i}
        \big(
            \overline{V}^{(i)}(x_i, \bar{\xi}_i) \,
            \big[ w^{(i)}_{M}(x_i,t) + u^{(i)}_{M}(x_i, \bar{\xi}_i,t) \big]
        \big)
   \\
 - \varepsilon \,  a_{ii}^{(i)}  \Big(
            w^{(i)}_{M}(x_i,t)
            +
            u^{(i)}_{M}(x_i, \bar{\xi}_i,t)
        \Big)^{\prime\prime}.
\end{multline}
It is easy to verify that thanks to our assumptions
\begin{equation}\label{Res_2}
  \sup_{\Omega^{(i)} _{\varepsilon,\gamma}\times (0,T)} |\mathcal{R}^{(i)}_M(x_i,\tfrac{\bar{x}_i}{\varepsilon}, t)| \le C^{(i)}_M,
\end{equation}
where the constant $C^{(i)}_M$ is independent of $\varepsilon.$
\begin{remark}
Hereinafter, all constants in inequalities are independent of the parameter~$\varepsilon.$
\end{remark}

In $\Omega^{(3)}_{\varepsilon, \gamma} := \Omega^{(3)} _\varepsilon \cap \{x\colon x_3 \in [3 \ell_0 \varepsilon^\gamma, \ell_3) \},$
 \begin{equation}\label{sum_2}
 \mathcal{U}_{M}(x, t;\varepsilon) = \sum\limits_{k=0}^{M} \varepsilon^{k}
 \Big( w_k^{(3)} (x_3, t) + u_k^{(3)}\Big( x_3, \frac{\overline{x}_3}{\varepsilon}, t \Big) + \chi_\delta^{(3)} (x_3) \,
        \Pi_k^{(3)}
        \Big( \frac{\ell_3 - x_3}{\varepsilon}, t  \Big)\Big)
 \end{equation}
 and due to the differential equations \eqref{eq_1}--\eqref{eq_3} for $\{u_k^{(3)}\}$
 and the differential equations \eqref{prim+probl+0}, \eqref{prim+probl+k+F} for $\{\Pi_k^{(3)}\}$
  this partial sum satisfies the differential equation
 \begin{multline}\label{Res_3}
    \partial_t\,\mathcal{U}_{M} -  \varepsilon\, \mathrm{div}_x \big( \mathbb{D}^{(3)}_\varepsilon(x) \nabla_x \mathcal{U}_{M}\big) +
  \mathrm{div} \big( \overrightarrow{V_\varepsilon}(x) \, \mathcal{U}_{M}\big)
       = \varepsilon^M\, \Big(\mathcal{R}^{(3)}_M + \chi_\delta^{(3)} (x_3) \, \partial_t \Pi_M^{(3)} \Big)
       \\
   - \varepsilon \, a_{33}^{(3)} \,  \big(\chi_\delta^{(3)}\big)^{\prime\prime}  \sum\limits_{k=0}^{M} \varepsilon^{k} \Pi_k^{(3)}
       + 2 a_{33}^{(3)} \,  \big(\chi_\delta^{(3)}\big)'  \, \sum\limits_{k=0}^{M} \varepsilon^{k} \partial_{\xi_3} \Pi_k^{(3)}
  + v_3^{(3)}(\ell_3)  \,  \big(\chi_\delta^{(3)}\big)'  \, \sum\limits_{k=0}^{M} \varepsilon^{k} \Pi_k^{(3)}
  \end{multline}
in $\Omega^{(3)}_{\varepsilon, \gamma}\times (0, T).$  The supports of summands in the second line of \eqref{Res_3} coincide with $\mathrm{supp}\big(\big(\chi_\delta^{(3)}\big)'\big),$ where
the functions $\{\Pi_k^{(3)}\}_{k=0}^{M}$ exponentially decay as $\varepsilon$ tends to zero. Therefore, the right-hand side of the differential equation \eqref{Res_3} has also the order $\varepsilon^M$ for $\varepsilon$ small enough.

Based on \eqref{bc_1} and \eqref{bc_3},  the partial sums \eqref{sum_1} and \eqref{sum_2} satisfy, respectively, the following relations
\begin{equation}\label{Res_3+}
  \big(-  \varepsilon \,  \mathbb{D}^{(i)}_\varepsilon\nabla_x \mathcal{U}_{M} +  \mathcal{U}_{M} \, \overrightarrow{V}^{(i)}_\varepsilon\big) \cdot \boldsymbol{\nu}_\varepsilon = \varepsilon\, \varphi_\varepsilon^{(i)} + \varepsilon^{M+1} \Phi_M^{(i)} \quad \text{on} \ \
  \Gamma^{(i)}_{\varepsilon, \gamma}\times (0, T), \quad i \in \{1, 2, 3\},
\end{equation}
where  the lateral surfaces $\Gamma^{(i)}_{\varepsilon, \gamma} := \Gamma^{(i)} _\varepsilon \cap \{x\colon x_i \in [3 \ell_0 \varepsilon^\gamma, \ell_i) \},$ $i\in \{1, 2, 3\},$
$$
  \Phi_M^{(i)}(x_i,\bar{\xi}_i, t) = \frac{1}{\sqrt{1 + \varepsilon^2 |h'_i(x_i)|^2}} \Bigg( - \varphi_\varepsilon^{(i)}(x,t) \, \sum_{k=M+1}^{+\infty}\eta^{(i)}_k \varepsilon^{k-M-1}
    - v^{(i)}_{i}(x_i) \, h'_i(x_i)
            \left(w_M^{(i)} (x_i,t) + u_M^{(i)} \left( x_i, \bar{\xi}_i, t \right)\right)
$$
$$
            + a^{(i)}_{ii}\, h'_i(x_i) \left(w_{M-1}^{(i)} (x_i,t) + u_{M-1}^{(i)} \left( x_i, \bar{\xi}_i, t \right)\right)'
                   + \big(\overline{V}^{(i)}(x_i, \bar{\xi}_i) \cdot \bar{\nu}_{\bar{\xi}_i}\big)\left(w_{M}^{(i)} (x_i,t) + u_{M}^{(i)} \left( x_i, \bar{\xi}_i, t \right)\right)
$$
\begin{equation}\label{Res_4}
            + \varepsilon\, a^{(i)}_{ii}\, h'_i(x_i) \left(w_{M}^{(i)} (x_i,t) + u_{M}^{(i)} \left( x_i, \bar{\xi}_i, t \right)\right)'
            \Bigg)
\end{equation}
It is easy to verify  that due to our assumptions there are positive constants $\varepsilon_0$ and $\tilde{C}^{(i)}_M$ such that for all $\varepsilon \in (0, \varepsilon_0)$ we have
\begin{equation}\label{Res_5}
  \sup_{\Gamma^{(i)}_{\varepsilon, \gamma} \times (0,T)} |\Phi_M^{(i)}(x_i,\tfrac{\bar{x}_i}{\varepsilon}, t)| \le \tilde{C}^{(i)}_M;
\end{equation}
in addition, the functions $\{\Phi_M^{(i)}\}_{i=1}^3$ vanish  on circular strips on the lateral surfaces of the thin cylinders near their bases
$\{\Upsilon_{\varepsilon}^{(i)} (\ell_i)\times (0, T)\}_{i=1}^3$ since the functions $\{h'_i\},$ $\{\overline{V}^{(i)}\},$  and  $\{\varphi_\varepsilon^{(i)}\}$ vanish there.

In the  neighborhood $\Omega^{(0)}_{\varepsilon, \gamma}:= \Omega_\varepsilon \cap \{x\colon x_i \in [0, 2\ell_0 \varepsilon^\gamma], \ \ i=1, 2, 3\}$ of the node $\Omega^{(0)}_{\varepsilon},$
$$
 \mathcal{U}_{M}(x, t;\varepsilon) = \sum\limits_{k=0}^{M} \varepsilon^{k} \, N_k \Big( \frac{x}{\varepsilon}, t \Big)
$$
and due to the problems \eqref{N_0_prob} and \eqref{N_k_prob}  this partial sum satisfies the differential equation
\begin{equation}\label{Res_6}
    \partial_t\,\mathcal{U}_{M} -  \varepsilon\, \mathrm{div}_x \big( \mathbb{D}^{(0)}_\varepsilon(x) \nabla_x \mathcal{U}_{M}\big) +
  \mathrm{div} \big( \overrightarrow{V_\varepsilon}(x) \, \mathcal{U}_{M}\big)
       = \varepsilon^M\,  \partial_t N_M \Big( \frac{x}{\varepsilon}, t \Big) \quad \text{in} \ \ \Omega^{(0)}_{\varepsilon, \gamma}\times (0,T);
 \end{equation}
 and the boundary condition
\begin{equation}\label{Res_7}
\big(-  \varepsilon \,  \mathbb{D}^{(0)} _\varepsilon\nabla_x \mathcal{U}_{M} +  \mathcal{U}_{M} \, \overrightarrow{V}_\varepsilon\big) \cdot \boldsymbol{\nu}_\varepsilon   =   \varphi^{(0)}_\varepsilon \quad
   \text{on} \ \Big(\partial\Gamma^{(0)}_{\varepsilon, \gamma} \setminus \Big\{
 \bigcup_{i=1}^3 \overline{\Upsilon_\varepsilon^{(i)} (2\ell_0 \varepsilon^\gamma)}
\Big\} \Big) \times (0, T).
 \end{equation}
It should be noted here that we can continue  the function $\varphi^{(0)}_\varepsilon$ by zero to  the remaining  lateral part, and  the functions $\{\varphi^{(i)}_\varepsilon\}_{i=1}^3$ vanish
on $\Gamma^{(i)} _\varepsilon \cap \{x\colon x_i \in [ \varepsilon \ell_0, 3 \ell_0 \varepsilon^\gamma]\},$ $i\in \{1, 2, 3\},$
respectively.

Now it remains to calculate the residuals left by the partial sum \eqref{aaN} in the differential equations in
$\Omega^{(i)} _\varepsilon \cap \{x\colon x_i \in [ 2 \ell_0 \varepsilon^\gamma, 3 \ell_0 \varepsilon^\gamma]\},$ $i\in \{1, 2, 3\},$
and in the boundary conditions on the corresponding lateral surfaces. There, owing to   Remark~\ref{r_2_1},  the partial sum reads as follows
$$
 \mathcal{U}_{M}(x, t;\varepsilon) = \sum\limits_{k=0}^{M} \varepsilon^{k} \, \bigg(
 \chi_{\ell_0}^{(i)} \left(\frac{x_i}{\varepsilon^\gamma}\right) \,  w_k^{(i)} (x_i, t) +
 \Big(1 -    \chi_{\ell_0}^{(i)} \left(\frac{x_i}{\varepsilon^\gamma}\right)\Big)\,  N_k \Big( \frac{x}{\varepsilon}, t \Big) \bigg).
$$

Since $h'_i, \, \varphi^{(i)}_\varepsilon,\,  \overline{V}^{(i)}$ vanish for $x_i \in [ 2 \ell_0 \varepsilon^\gamma, 3 \ell_0 \varepsilon^\gamma],$
\begin{equation}\label{Res_8}
\big(-  \varepsilon \,  \mathbb{D}^{(i)} _\varepsilon\nabla_x \mathcal{U}_{M} +  \mathcal{U}_{M} \, \overrightarrow{V}_\varepsilon\big) \cdot \boldsymbol{\nu}_\varepsilon   =  0
 \end{equation}
on the corresponding lateral surface of the domain $\Omega^{(i)} _\varepsilon \cap \{x\colon x_i \in [ 2 \ell_0 \varepsilon^\gamma, 3 \ell_0 \varepsilon^\gamma]\}.$ In this domain, based on \eqref{Res_1+} and \eqref{Res_6}, we have
\begin{multline}\label{Res_9}
   \partial_t\,\mathcal{U}_{M} -  \varepsilon\, \mathrm{div} \big( \mathbb{D}^{(0)}_\varepsilon(x) \nabla \mathcal{U}_{M}\big) +
  \mathrm{div} \big( \overrightarrow{V_\varepsilon}(x) \, \mathcal{U}_{M}\big)
  \\
       =
       \varepsilon^M\,  \bigg\{\chi_{\ell_0}^{(i)} \left(\frac{x_i}{\varepsilon^\gamma}\right)
      \bigg(  -  a_{ii}^{(i)} \Big( w^{(i)}_{M-1}(x_i,t)\Big)^{\prime\prime}
  +  \mathrm{v}_i \Big(w^{(i)}_M(x_i,t) \Big)^\prime
        - \varepsilon  a_{ii}^{(i)}  \Big(w^{(i)}_{M}(x_i,t)\Big)^{\prime\prime}\bigg)
      +   \Big(1 -    \chi_{\ell_0}^{(i)} \left(\frac{x_i}{\varepsilon^\gamma}\right)\Big) \partial_t N_M \bigg\}
      \\
      - a_{ii} \left(\chi_{\ell_0}^{(i)}\right)'' \sum\limits_{k=0}^{M} \varepsilon^{k+1 -2\gamma}    \Big(w_k^{(i)} (x_i, t) - N_k\Big)
      - 2 a_{ii} \, \left(\chi_{\ell_0}^{(i)}\right)' \sum\limits_{k=0}^{M} \varepsilon^{k+1 -\gamma}  \Big(\partial_{x_i} w_k^{(i)} (x_i, t) - \varepsilon^{-1} \partial_{\xi_i} N_k\Big)
      \\
      + \mathrm{v}_i \, \left(\chi_{\ell_0}^{(i)}\right)' \sum\limits_{k=0}^{M} \varepsilon^{k -\gamma}   \Big(w_k^{(i)} (x_i, t) - N_k\Big).
      \end{multline}
Summands in the first line of the right-hand side of \eqref{Res_9} are of the order $\varepsilon^M.$ The other summands are localized in
the support of  $\big(\chi_{\ell_0}^{(i)}\big)'$, i.e., in  $ \Omega_\varepsilon^{(i)} \cap \big\{ x: \  x_i\in  [2\ell_0\varepsilon^\gamma, 3\ell_0\varepsilon^\gamma] \big\}. $ Therefore, using  the Taylor formula  for the functions $\{w_{k}^{(i)}\}_{k=0}^M$ at the point $x_i=0$
and the formula \eqref{new-solution_k},  summands in the last two lines of \eqref{Res_9} can be rewritten as follows
\begin{multline*}
 a_{ii} \left(\chi_{\ell_0}^{(i)}\right)'' \sum\limits_{k=0}^{M} \varepsilon^{k+1 -2\gamma}    \widetilde{N}_k\Big(\frac{x}{\varepsilon},t\Big) + \mathcal{O}(\varepsilon^M)  + 2 a_{ii} \, \left(\chi_{\ell_0}^{(i)}\right)' \sum\limits_{k=0}^{M} \varepsilon^{k -\gamma} \,  \partial_{\xi_i}\widetilde{N}_k(\xi,t)\big|_{\xi=\frac{x}{\varepsilon}} + \mathcal{O}(\varepsilon^{M-\gamma})
 \\
 - \mathrm{v}_i \, \left(\chi_{\ell_0}^{(i)}\right)' \sum\limits_{k=0}^{M} \varepsilon^{k -\gamma}   \widetilde{N}_k\Big(\frac{x}{\varepsilon,t}\Big) + \mathcal{O}(\varepsilon^M) \quad \text{as} \ \ \varepsilon \to 0.
 \end{multline*}
Taking into account  \eqref{rem_exp-decrease}, the  maximum of $|\widetilde{N}_k|$ and $|\partial_{\xi_i}\widetilde{N}_k|$ over  $ \big(\Omega_\varepsilon^{(i)} \cap \big\{ x: \  x_i\in  [2\ell_0\varepsilon^\gamma, 3\ell_0\varepsilon^\gamma] \big\} \big) \times [0, T]$ are of the order $\exp\big(-\beta_0 2 \ell_0 \, \varepsilon^{\gamma -1}\big),$ i.e.,
these terms exponentially decrease as the parameter $\varepsilon$ tends to zero.  Thus,  the right-hand side of \eqref{Res_9} has the order  $\varepsilon^{M - \gamma}.$

Based on the calculations in this section, the following statement holds.

\begin{proposition}\label{Prop-3-1} For any  $M\in \Bbb N, $ there is a positive number $\varepsilon_0$ such that for all $\varepsilon\in (0, \varepsilon_0)$  the difference between the partial sum \eqref{aaN} and the solution to the problem \eqref{probl} satisfies the following relations:
\begin{equation}\label{formal_solution}
\left\{
\begin{array}{rcll}
\partial_t(\mathcal{U}_{M} - u_\varepsilon)   -  \varepsilon\, \mathrm{div}_x \big( \mathbb{D}^{(i)}_\varepsilon \nabla_x(\mathcal{U}_{M} - u_\varepsilon)\big) +
  \mathrm{div}_x \big( \overrightarrow{V_\varepsilon}^{(i)} \, (\mathcal{U}_{M} - u_\varepsilon)\big)
  & = & \varepsilon^{M - \gamma} \, \mathcal{R}^{(i)}_M, & \text{in} \ \Omega_\varepsilon^{(i)} \times (0, T),
\\[2mm]
\big(-  \varepsilon \,  \mathbb{D}^{(i)}_\varepsilon\nabla_x(\mathcal{U}_{M} - u_\varepsilon) +  (\mathcal{U}_{M} - u_\varepsilon) \, \overrightarrow{V}^{(i)}_\varepsilon\big) \cdot \boldsymbol{\nu}_\varepsilon   &= &\varepsilon^{M+1} \Phi_M^{(i)} &   \text{on} \  \Gamma_\varepsilon^{(i)}\times (0, T),
\\[2mm]
 (\mathcal{U}_{M} - u_\varepsilon)\big|_{x_i= \ell_i}
 & = & 0, &  \text{on} \ \Upsilon_{\varepsilon}^{(i)} (\ell_i)\times (0, T) ,
\\
& & & i\in\{1,2,3\},
\\[2mm]
\partial_t(\mathcal{U}_{M} - u_\varepsilon) -  \varepsilon\, \mathrm{div}_x \big( \mathbb{D}^{(0)}_\varepsilon \nabla_x(\mathcal{U}_{M} - u_\varepsilon)\big) +
   \overrightarrow{V_\varepsilon}^{(0)} \cdot \nabla_x(\mathcal{U}_{M} - u_\varepsilon) & = & \varepsilon^M\,  \mathcal{R}^{(0)}_M , &
    \text{in} \ \Omega_\varepsilon^{(0)}\times (0, T),
\\[2mm]
 \big(\mathbb{D}^{(0)}_\varepsilon\nabla_x(\mathcal{U}_{M} - u_\varepsilon)\big) \cdot \boldsymbol{\nu}_\varepsilon  & = & 0  &
    \text{on} \ \Gamma_\varepsilon^{(0)}\times (0, T),
\\[2mm]
  (\mathcal{U}_{M} - u_\varepsilon)\big|_{t=0}
 & = & 0, & \text{on} \ \Omega_{\varepsilon},
\end{array}\right.
\end{equation}
where $\gamma$ is a fixed number from the interval $(\frac23, 1),$ $\mathcal{R}^{(0)}_M := \partial_t N_M,$
\begin{equation}\label{Res_10}
  \sup_{\Omega^{(i)} _{\varepsilon}\times (0,T)} |\mathcal{R}^{(i)}_M(x_i,\tfrac{\bar{x}_i}{\varepsilon}, t)| \le C^{(i)}_M, \quad i\in\{0,1,2,3\},
\end{equation}
\begin{equation}\label{Res_11}
  \sup_{\Gamma^{(i)}_{\varepsilon} \times (0,T)} |\Phi_M^{(i)}(x_i,\tfrac{\bar{x}_i}{\varepsilon}, t)| \le \tilde{C}^{(i)}_M, \quad i\in\{1,2,3\},
\end{equation}
and the support of $\Phi_M^{(i)}$  with respect the variable $x_i$ uniformly in $t\in [0, T]$ lies in $(\varepsilon \ell_0, \ell_i)$ $(i\in \{1,2,3\}).$
\end{proposition}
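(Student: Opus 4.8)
The plan is to collect and repackage the residual computations carried out in Section~\ref{Sec:justification}. The domain $\Omega_\varepsilon$ splits naturally into five overlapping pieces according to which part of the ansatz is active: the two ``long cylinder'' regions $\Omega^{(i)}_{\varepsilon,\gamma}$ for $i\in\{1,2\}$ where $\mathcal{U}_M$ reduces to the regular part \eqref{sum_1}; the region $\Omega^{(3)}_{\varepsilon,\gamma}$ where \eqref{sum_2} adds the boundary-layer part; the node neighborhood $\Omega^{(0)}_{\varepsilon,\gamma}$ where only the node-layer part $N_k(x/\varepsilon,t)$ survives; and the three transition collars $\Omega^{(i)}_\varepsilon\cap\{x_i\in[2\ell_0\varepsilon^\gamma,3\ell_0\varepsilon^\gamma]\}$ where the cut-off $\chi^{(i)}_{\ell_0}$ interpolates. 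First I would record the differential-equation residuals: \eqref{Res_1}--\eqref{Res_1+} give the bound \eqref{Res_2} on $\mathcal{R}^{(i)}_M$ in $\Omega^{(i)}_{\varepsilon,\gamma}$; \eqref{Res_3} shows the extra $\chi^{(3)}_\delta$-derivative terms in $\Omega^{(3)}_{\varepsilon,\gamma}$ are $O(\varepsilon^M)$ because $\Pi^{(3)}_k$ decays exponentially by \eqref{exp_decay}; and \eqref{Res_6} identifies the residual in the node neighborhood as $\varepsilon^M\partial_t N_M$, so $\mathcal{R}^{(0)}_M:=\partial_t N_M$. Since all coefficients are $C^\infty$ by Remark~\ref{differ} and the $\{N_k\}$, $\{\widetilde N_k\}$ have the exponential-in-$\xi_i$ asymptotics \eqref{rem_exp-decrease}, $\partial_t N_M$ is uniformly bounded on $\Xi\times[0,T]$, which yields \eqref{Res_10} for $i=0$; the bounds for $i\in\{1,2,3\}$ are exactly \eqref{Res_2}.

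Next I would handle the transition collars. There the partial sum is the $\chi^{(i)}_{\ell_0}$-convex combination of the regular and node-layer parts, and \eqref{Res_9} is the key identity: its first line contributes terms of order $\varepsilon^M$, while the remaining lines are supported in $\mathrm{supp}\big((\chi^{(i)}_{\ell_0})'\big)\subset\{x_i\in[2\ell_0\varepsilon^\gamma,3\ell_0\varepsilon^\gamma]\}$. Using the Taylor expansion of $w^{(i)}_k$ at $x_i=0$ together with the representation \eqref{new-solution_k} of $N_k$, the difference $w^{(i)}_k(x_i,t)-N_k(x/\varepsilon,t)$ on the collar is $-\widetilde N_k(x/\varepsilon,t)+O(\varepsilon^M)$, and by \eqref{rem_exp-decrease} both $|\widetilde N_k|$ and $|\partial_{\xi_i}\widetilde N_k|$ there are $O\big(\exp(-\beta_0\cdot 2\ell_0\varepsilon^{\gamma-1})\big)$, hence super-polynomially small. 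The worst power that survives after multiplication by $\varepsilon^{k-\gamma}$ (coming from the $(\chi^{(i)}_{\ell_0})'$ and $\varepsilon^{-\gamma}$ scaling factors) and summation is $\varepsilon^{M-\gamma}$; this is the single place where a negative power of $\varepsilon$ is incurred, and it is the reason the right-hand side in \eqref{formal_solution} carries $\varepsilon^{M-\gamma}$ rather than $\varepsilon^M$. I would absorb these collar contributions into the redefined $\mathcal{R}^{(i)}_M$, which remains uniformly bounded since $\gamma<1$ and $M\ge 1$.

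Then I would assemble the boundary residuals. On the lateral surfaces, \eqref{Res_3+}--\eqref{Res_5} give the relation $(-\varepsilon\mathbb{D}^{(i)}_\varepsilon\nabla_x\mathcal{U}_M+\mathcal{U}_M\overrightarrow{V}^{(i)}_\varepsilon)\cdot\boldsymbol{\nu}_\varepsilon=\varepsilon\varphi^{(i)}_\varepsilon+\varepsilon^{M+1}\Phi^{(i)}_M$ on $\Gamma^{(i)}_{\varepsilon,\gamma}$ with the uniform bound \eqref{Res_5}; on the remaining collar piece of the lateral surface, \eqref{Res_8} shows the flux mismatch is exactly zero because $h'_i$, $\varphi^{(i)}_\varepsilon$, $\overline V^{(i)}$ all vanish there, so $\varphi^{(i)}_\varepsilon$ extends by zero and $\Phi^{(i)}_M$ may be taken zero on that piece; together these give \eqref{Res_11} and the stated support property of $\Phi^{(i)}_M$. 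On the node surface, \eqref{Res_7} and Remark~\ref{rem_node} give the homogeneous Neumann residual $(\mathbb{D}^{(0)}_\varepsilon\nabla_x\mathcal{U}_M)\cdot\boldsymbol{\nu}_\varepsilon=0$ on $\Gamma^{(0)}_\varepsilon$, using that $\overrightarrow{V}^{(0)}_\varepsilon\cdot\boldsymbol{\nu}_\varepsilon=0$ there. Finally the initial and outflow conditions $\mathcal{U}_M|_{t=0}=0$ and $\mathcal{U}_M|_{x_i=\ell_i}=q_i(t)$ follow immediately from \eqref{aaN}, because each coefficient $w^{(i)}_k$, $u^{(i)}_k$, $N_k$, $\Pi^{(i)}_k$ vanishes at $t=0$ and the boundary-layer and node-layer parts are switched off by their cut-offs near $\Upsilon^{(i)}_\varepsilon(\ell_i)$. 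Subtracting the problem \eqref{probl} for $u_\varepsilon$ then yields \eqref{formal_solution}.

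The only genuine subtlety, and the step I would treat most carefully, is the collar bookkeeping leading to the exponent $M-\gamma$: one must verify that every term produced by differentiating the cut-offs either carries a full $\varepsilon^M$ (the first line of \eqref{Res_9}) or is exponentially small (the $\widetilde N_k$ terms, via \eqref{rem_exp-decrease}), so that after the worst-case scaling $\varepsilon^{k-\gamma}$ the total residual is uniformly $O(\varepsilon^{M-\gamma})$; this is where the restriction $\gamma\in(\tfrac23,1)$ is used, and it is purely a matter of organizing the already-derived estimates rather than a new difficulty.
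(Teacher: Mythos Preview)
Your proposal is correct and follows essentially the same approach as the paper: Proposition~\ref{Prop-3-1} is stated there precisely as the synthesis of the residual calculations \eqref{Res_1}--\eqref{Res_9} already carried out in Section~\ref{Sec:justification}, and you have organized those pieces accurately. One minor remark: the paper does not actually invoke the lower bound $\gamma>\tfrac{2}{3}$ at the collar step---the exponentially small $\widetilde N_k$ terms absorb any negative power of $\varepsilon$ coming from $(\chi^{(i)}_{\ell_0})'$ and $(\chi^{(i)}_{\ell_0})''$ for every $\gamma<1$---so your final parenthetical slightly overstates its role here, but this does not affect the correctness of your argument.
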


Proposition~\ref{Prop-3-1} means that the series \eqref{asymp_expansion} is the formal asymptotic solution to the problem \eqref{probl}.
To prove that this series is the asymptotic expansion of the solution to the  problem \eqref{probl}, we need a priori estimates, which are very important not only in asymptotic analysis but also in numerical analysis (see e.g. \cite{Stynes-2018}) and the theory of boundary value problems.

%
\section{A priori estimates. The main results}\label{A priori estimates}
\begin{lemma}[the maximum principle]\label{the maximum principle}
  Let  the assumptions ${\bf A1}$ and ${\bf A2}$  made in Section~\ref{Sec:Statement}  be satisfied and
    $\psi_\varepsilon$  be a classical solution to the problem
   \begin{equation}\label{probl+}
\left\{\begin{array}{rcll}
\partial_t\psi_\varepsilon   -  \varepsilon\, \mathrm{div} \big( \mathbb{D}^{(i)}_\varepsilon \nabla \psi_\varepsilon\big) +
  \mathrm{div} \big( \overrightarrow{V_\varepsilon}^{(i)} \, \psi_\varepsilon\big)
  & = & F_i, & \text{in} \ \Omega_\varepsilon^{(i)} \times (0, T), \ \   i\in\{1,2,3\},
\\[2mm]
\big(-  \varepsilon \,  \mathbb{D}^{(i)}_\varepsilon\nabla \psi_\varepsilon +  \psi_\varepsilon \, \overrightarrow{V}^{(i)}_\varepsilon\big) \cdot \boldsymbol{\nu}_\varepsilon   &= & \varepsilon
\, \Phi_i &   \text{on} \  \Gamma_\varepsilon^{(i)}\times (0, T), \ \   i\in\{1,2,3\},
\\[2mm]
\partial_t\psi_\varepsilon -  \varepsilon\, \mathrm{div} \big( \mathbb{D}^{(0)}_\varepsilon \nabla \psi_\varepsilon\big) +
  \overrightarrow{V_\varepsilon}^{(0)} \cdot \nabla\psi_\varepsilon  & = & F_0, &
    \text{in} \ \Omega_\varepsilon^{(0)}\times (0, T),
\\[2mm]
 \big(\mathbb{D}^{(0)}_\varepsilon\nabla \psi_\varepsilon\big) \cdot \boldsymbol{\nu}_\varepsilon  & = & 0  &
    \text{on} \ \Gamma_\varepsilon^{(0)}\times (0, T),
\\[2mm]
 \psi_\varepsilon \big|_{x_i= \ell_i}
 & = & 0, &  \text{on} \ \Upsilon_{\varepsilon}^{(i)} (\ell_i)\times (0, T) , \ \  i\in\{1,2,3\},
\\[2mm]
 \psi_\varepsilon \big|_{t=0}
 & = & 0, & \text{on} \ \Omega_{\varepsilon},
\end{array}\right.
\end{equation}
where the given functions $\{F_i\}_{i=0}^3, \, \{\Phi_i\}_{i=1}^3$ are continuous
and bounded in their domains of definition, abd the support of $\Phi_i$  with respect the variable $x_i$ lies in  $(\varepsilon \ell_0, \ell_i)$ $(i\in \{1,2,3\})$ uniformly in $t\in [0, T]$ .

Then
\begin{equation}\label{max_1}
  \max_{\overline{\Omega_\varepsilon}\times [0, T]} |\psi_\varepsilon|  \le C_0(T) \bigg(\sum_{i=0}^{3}\max_{\overline{\Omega^{(i)}_\varepsilon}\times [0, T]} |F_i| +  \sum_{i=1}^{3}\max_{\overline{\Gamma^{(i)}_\varepsilon}\times [0, T]} |\Phi_i| \bigg),
\end{equation}
where the constant $C_0(T)$ does not depend on  $\varepsilon, \, \psi_\varepsilon, \, \{F_i\}, $ and  $\{\Phi_i\}.$
\end{lemma}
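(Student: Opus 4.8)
The plan is to prove the one-sided estimate $\psi_\varepsilon\le C_0(T)\,\mathcal{M}$, where $\mathcal{M}:=\sum_{i=0}^{3}\max_{\overline{\Omega^{(i)}_\varepsilon}\times[0,T]}|F_i|+\sum_{i=1}^{3}\max_{\overline{\Gamma^{(i)}_\varepsilon}\times[0,T]}|\Phi_i|$, by an energy argument based on truncation; applying the same bound to $-\psi_\varepsilon$ (with data $-F_i,-\Phi_i$) then yields \eqref{max_1}. First I would pass to the weak formulation of \eqref{probl+}: for a.e.\ $t\in(0,T)$ and every test function $v$ vanishing on $\bigcup_{i=1}^{3}\Upsilon^{(i)}_\varepsilon(\ell_i)$,
\begin{equation*}
\langle\partial_t\psi_\varepsilon,v\rangle+\varepsilon\int_{\Omega_\varepsilon}\mathbb{D}_\varepsilon\nabla\psi_\varepsilon\cdot\nabla v\,dx-\int_{\Omega_\varepsilon}\psi_\varepsilon\,\overrightarrow{V_\varepsilon}\cdot\nabla v\,dx=\int_{\Omega_\varepsilon}F\,v\,dx+\varepsilon\sum_{i=1}^{3}\int_{\Gamma^{(i)}_\varepsilon}\Phi_i\,v\,d\sigma .
\end{equation*}
The boundary integral over $\Gamma^{(0)}_\varepsilon$ produced when integrating both the diffusive and the convective flux by parts vanishes identically, since $\mathbb{D}^{(0)}_\varepsilon\nabla\psi_\varepsilon\cdot\boldsymbol{\nu}_\varepsilon=0$ and, crucially, $\overrightarrow{V_\varepsilon}^{(0)}\cdot\boldsymbol{\nu}_\varepsilon=0$ on $\Gamma^{(0)}_\varepsilon$; this is exactly where the conservativeness of $\overrightarrow{V_\varepsilon}$ in the node enters.

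Next I would fix $\kappa\ge0$ and test with $v=(\psi_\varepsilon-\kappa)_+$, which is admissible because $\psi_\varepsilon=0\le\kappa$ on $\bigcup_i\Upsilon^{(i)}_\varepsilon(\ell_i)$. The time term yields $\tfrac12\tfrac{d}{dt}\|(\psi_\varepsilon-\kappa)_+\|_{L^2(\Omega_\varepsilon)}^2$; the diffusive term is bounded below by $\varepsilon\kappa_0\|\nabla(\psi_\varepsilon-\kappa)_+\|_{L^2(\Omega_\varepsilon)}^2$ with $\kappa_0:=\min_i\kappa_0^{(i)}$ from \eqref{n1}; and one further integration by parts in the convective term produces $\tfrac12\int_{\Omega_\varepsilon}(\mathrm{div}_x\overrightarrow{V_\varepsilon})(\psi_\varepsilon-\kappa)_+^2+\kappa\int_{\Omega_\varepsilon}(\mathrm{div}_x\overrightarrow{V_\varepsilon})(\psi_\varepsilon-\kappa)_+$ together with boundary integrals on the lateral surfaces $\Gamma^{(i)}_\varepsilon$ only (those on $\Gamma^{(0)}_\varepsilon$ vanish again by $\overrightarrow{V_\varepsilon}^{(0)}\cdot\boldsymbol{\nu}_\varepsilon=0$, those on $\Upsilon^{(i)}_\varepsilon(\ell_i)$ because $(\psi_\varepsilon-\kappa)_+=0$ there). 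I would then estimate every remaining term using the following $\varepsilon$-uniform facts: $\mathrm{div}_x\overrightarrow{V_\varepsilon}^{(i)}$ is bounded while $\mathrm{div}_x\overrightarrow{V_\varepsilon}^{(0)}=0$; $|\overrightarrow{V_\varepsilon}^{(i)}\cdot\boldsymbol{\nu}_\varepsilon|\le C\varepsilon$ on $\Gamma^{(i)}_\varepsilon$ and $|\Gamma^{(i)}_\varepsilon|=\mathcal{O}(\varepsilon)$; an anisotropic trace inequality on the thin cylinders, $\|v\|_{L^2(\Gamma^{(i)}_\varepsilon)}^2\le C(\varepsilon^{-1}\|v\|_{L^2(\Omega^{(i)}_\varepsilon)}^2+\varepsilon\|\nabla v\|_{L^2(\Omega^{(i)}_\varepsilon)}^2)$; and the Friedrichs inequality on $\Omega_\varepsilon$ with an $\varepsilon$-independent constant, available because $(\psi_\varepsilon-\kappa)_+$ vanishes on $\bigcup_i\Upsilon^{(i)}_\varepsilon(\ell_i)$ and the cylinders have $\mathcal{O}(1)$ length. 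After absorbing the terms of order $\varepsilon^2\|\nabla(\psi_\varepsilon-\kappa)_+\|^2$ into $\varepsilon\kappa_0\|\nabla(\psi_\varepsilon-\kappa)_+\|^2$ for $\varepsilon$ small, I arrive at a differential inequality of the form
\begin{equation*}
\frac{d}{dt}\|(\psi_\varepsilon-\kappa)_+(t)\|^2+c\,\varepsilon\,\|\nabla(\psi_\varepsilon-\kappa)_+(t)\|^2\le C\,\|(\psi_\varepsilon-\kappa)_+(t)\|^2+C(\mathcal{M}+\kappa)\,\mathcal{M}\,|A_\kappa(t)| ,
\end{equation*}
where $A_\kappa(t):=\{x\in\Omega_\varepsilon:\psi_\varepsilon(x,t)>\kappa\}$ and all constants are independent of $\varepsilon$; integrating in $t$ with $\psi_\varepsilon|_{t=0}=0$ and applying Grönwall's lemma bounds $\sup_{[0,T]}\|(\psi_\varepsilon-\kappa)_+\|^2+\varepsilon\int_0^T\|\nabla(\psi_\varepsilon-\kappa)_+\|^2$ in terms of $(\mathcal{M}+\kappa)\mathcal{M}\sup_{[0,T]}|A_\kappa(t)|$.

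Finally I would run a Stampacchia/De~Giorgi level-set iteration: combining this energy bound with a Sobolev-type inequality adapted to the thin geometry of $\Omega_\varepsilon$ yields, for $\kappa'>\kappa$, a recursive estimate for $\Phi(\kappa):=\int_0^T|A_\kappa(t)|\,dt$ of the type $\Phi(\kappa')\le C\big((\kappa'-\kappa)^{-2}(\mathcal{M}+\kappa)\mathcal{M}\big)^{1+\theta}\Phi(\kappa)^{1+\theta}$ with $\theta>0$ and $C,\theta$ independent of $\varepsilon$; the standard iteration lemma then gives $\Phi(\kappa)=0$, hence $\psi_\varepsilon\le\kappa$, for $\kappa=C_0(T)\mathcal{M}$, which is the asserted estimate.

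The main obstacle is the simultaneous control of the $\varepsilon$-dependence throughout: the convective flux through the lateral surfaces $\Gamma^{(i)}_\varepsilon$ has indefinite sign, so it cannot be eliminated by a comparison principle together with a Hopf-type boundary argument — a barrier giving the correct sign of the conormal derivative on $\Gamma^{(i)}_\varepsilon$ necessarily spoils the sub/supersolution property in the bulk, and conversely — and it must instead be absorbed quantitatively. This forces the use of sharp, $\varepsilon$-uniform trace and Poincaré inequalities on the thin junction, and in particular a substitute for the three-dimensional Sobolev embedding, whose constant degenerates on thin domains: here one exploits the anisotropic structure of the cylinders, effectively a one-dimensional embedding in the longitudinal variable combined with the cross-sectional $L^2$-control coming from $\varepsilon\|\nabla_{\bar x}\psi_\varepsilon\|^2$. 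The conservativeness of $\overrightarrow{V_\varepsilon}$ in the node, invoked twice above to annihilate the boundary integrals on $\Gamma^{(0)}_\varepsilon$, is precisely what lets the argument close without any further assumption on $\overrightarrow{V_\varepsilon}^{(0)}$.
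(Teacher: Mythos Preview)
Your route via truncation and a De~Giorgi/Stampacchia iteration is genuinely different from the paper's, which argues pointwise. But there is a real gap. Your energy inequality only controls $\varepsilon\int_0^T\|\nabla(\psi_\varepsilon-\kappa)_+\|^2$, not $\int_0^T\|\nabla(\psi_\varepsilon-\kappa)_+\|^2$; this is forced by the factor $\varepsilon$ in front of the diffusion in \eqref{probl+}. To obtain the superlinear recursion $\Phi(\kappa')\le C[\cdots]\,\Phi(\kappa)^{1+\theta}$ with $\theta>0$ you need a (parabolic) Sobolev embedding, and every such embedding feeds on the gradient. Substituting $\|\nabla(\psi_\varepsilon-\kappa)_+\|^2\le \varepsilon^{-1}(\cdots)$ into the iteration produces a constant that blows up as $\varepsilon\to 0$, so the argument as written yields only $\|\psi_\varepsilon\|_{L^\infty}\le C(\varepsilon)\,\mathcal{M}$ with $C(\varepsilon)\to\infty$. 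Your appeal to an ``effectively one-dimensional embedding in the longitudinal variable'' does not rescue this: the longitudinal derivative $\partial_{x_i}\psi_\varepsilon$ is likewise controlled only with prefactor $\varepsilon$, so even the 1D embedding $H^1\hookrightarrow L^\infty$ costs $\varepsilon^{-1/2}$. What you identify as the ``main obstacle'' is exactly this, and the remedy you sketch is not concrete enough to close it.

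Your last paragraph also contains a wrong assertion, namely that a barrier giving the right sign of the conormal derivative on $\Gamma^{(i)}_\varepsilon$ ``necessarily spoils the sub/supersolution property in the bulk''. The paper does precisely what you rule out. After the substitution $\psi_\varepsilon=y_\varepsilon e^{\lambda t}$ it sets $y_\varepsilon=\eta_\varepsilon z_\varepsilon$ with
\[
z_\varepsilon(x)=1+\frac{\varepsilon\,m\,h_i(x_i)}{2}\Big(1-\Big|\frac{\bar x_i}{\varepsilon h_i(x_i)}\Big|^2\Big),
\]
so that $z_\varepsilon|_{\Gamma^{(i)}_\varepsilon}=1$ and $-\partial_{\bar\nu}z_\varepsilon|_{\Gamma^{(i)}_\varepsilon}=m$. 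The crucial scaling is $z_\varepsilon-1=\mathcal{O}(\varepsilon)$ while $\Delta_{\bar x_i}z_\varepsilon=\mathcal{O}(\varepsilon^{-1})$; hence the new zeroth-order term $\varepsilon\,\Delta z_\varepsilon/z_\varepsilon$ in the equation for $\eta_\varepsilon$ is $\mathcal{O}(1)$ \emph{uniformly in $\varepsilon$} and is absorbed by choosing $\lambda$ large but $\varepsilon$-independent. The bulk coefficient thus stays positive, the Hopf lemma applies at a boundary maximum, and the transformed boundary condition now carries a strictly positive coefficient (since $m$ is chosen to dominate $|h_i'v_i^{(i)}|+|\overline V^{(i)}|$), yielding $\eta_\varepsilon\le C|\Phi_i|$ there. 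The point you missed is the exact match between the $\varepsilon$ multiplying the Laplacian and the $\varepsilon^{-1}$ in $\Delta z_\varepsilon$; this is what allows a pointwise comparison argument to deliver an $\varepsilon$-independent constant without ever invoking a Sobolev inequality.
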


The proof is conducted in the framework of \cite[\S 1]{Il_Ole_Kal_1962} and \cite[Ch. 1, \S 2]{Lad_Sol_Ura_1968}, where  maximal principles for solutions to the first and second initial-boundary value problems were proved;  its novelty lies in finding out how the constant in this estimate depends on the small parameter $\varepsilon$  and choosing a special function that helps to do this.

\begin{proof}
{\bf 1.}  First, let's introduce a new function $y_\varepsilon$ with the help of the substitution $\psi_\varepsilon = y_\varepsilon \, e^{\lambda t},$ where the  positive constant
\begin{equation}\label{e_7}
  \lambda = 1+ \max_{i\in \{1, 2, 3\}} \Big( \max_{[0, \ell_i]}
 \big|\frac{d v_i^{(i)}(x_i)}{d x_i}\big| + \max_{\overline{\Omega^{(i)}_\varepsilon}} \big| \mathrm{div}_{\bar{\xi}_i}  \overline{V}^{(i)}(x_i, \bar{\xi}_i)\big|\Big|_{\bar{\xi}_i=\frac{\bar{x}_i}{\varepsilon}}
    \Big).
\end{equation}
Then the function $y_\varepsilon$ satisfies equations
 \begin{equation}\label{e_1}
  \partial_t y_\varepsilon - \varepsilon \sum_{k,l=1}^{3} a_{kl}^{(0)}  \frac{\partial^2 y_\varepsilon}{\partial x_k \partial x_l}
-   \sum_{k,l=1}^{3} \frac{\partial a_{kl}^{(0)}(\xi) }{\partial \xi_k}\Big|_{\xi=\frac{x}{\varepsilon}}  \frac{\partial y_\varepsilon}{\partial x_l}
+ \overrightarrow{V_\varepsilon}^{(0)} \cdot \nabla_x y_\varepsilon + \lambda y_\varepsilon = e^{-\lambda t} F_0 \quad \text{in} \ \ \Omega_\varepsilon^{(0)}\times (0, T),
\end{equation}
\begin{equation}\label{e_2}
\big(\mathbb{D}^{(0)}_\varepsilon\nabla y_\varepsilon\big) \cdot \boldsymbol{\nu}_\varepsilon   =  0  \quad
    \text{on} \ \Gamma_\varepsilon^{(0)}\times (0, T),
\end{equation}
and (to simplify the notation, we confine ourselves to the equations in the first thin cylinder)
\begin{multline}\label{e_3}
  \partial_t y_\varepsilon - \varepsilon\,  a_{11}^{(1)} \frac{\partial^2 y_\varepsilon}{\partial x_1^2} -  \varepsilon \sum_{k,l=2}^{3} a_{kl}^{(1)}  \frac{\partial^2 y_\varepsilon}{\partial x_k \partial x_l}
-   \sum_{k,l=2}^{3} \frac{\partial a_{kl}^{(1)}(\xi) }{\partial \xi_k}\Big|_{\xi=\frac{x}{\varepsilon}}  \frac{\partial y_\varepsilon}{\partial x_l}
+ \overrightarrow{V_\varepsilon}^{(1)} \cdot \nabla_x y_\varepsilon
\\
 + \Big( \frac{d v_1^{(1)}(x_1)}{d x_1} + \mathrm{div}_{\bar{\xi}_1}  \overline{V}^{(1)}(x_1, \bar{\xi}_1)\Big|_{\bar{\xi}_1=\frac{\bar{x}_1}{\varepsilon}} +  \lambda\Big) y_\varepsilon =  e^{-\lambda t} F_1  \quad \text{in} \ \ \Omega_\varepsilon^{(1)}\times (0, T),
  \end{multline}
\begin{equation}\label{e_4}
    \big(\mathbb{D}^{(1)}_\varepsilon\nabla y_\varepsilon\big) \cdot \boldsymbol{\nu}_\varepsilon  +  \frac{y_\varepsilon}{\sqrt{1 + \varepsilon^2 |h'_1(x_1)|^2}} \Big(   h'_1(x_1)\, v_1^{(1)} - \overline{V}^{(1)}(x_1, \bar{\xi}_1) \cdot \bar{\nu}_{\bar{\xi}_i}\Big|_{\bar{\xi}_1=\frac{\bar{x}_1}{\varepsilon}}\Big)  =
- e^{-\lambda t} \, \Phi_1  \quad   \text{on} \  \Gamma_\varepsilon^{(1)}\times (0, T),
\end{equation}
where $\overline{V}^{(1)}$, $\boldsymbol{\nu}_\varepsilon$ and $\bar{\nu}_{\bar{\xi}_i}$ are determined in \eqref{V-2D} and \eqref{normal_1} respectively.

Consider any $t_1 \in (0, T).$ Three cases are possible
$$
 1) \  \max_{\overline{\Omega_\varepsilon}\times [0, t_1]} y_\varepsilon  \le  0, \quad
 2) \  0 < \max_{\overline{\Omega_\varepsilon}\times [0, t_1]} y_\varepsilon  \le \max_{\overline{\Gamma_\varepsilon}\times (0, t_1]} y_\varepsilon, \quad
 3) \ 0 < \max_{\overline{\Omega_\varepsilon}\times [0, t_1]} y_\varepsilon  \le y_\varepsilon(x^0, t_0),
 $$
 where the point $P_0 = (x^0, t_0)\in \Omega_\varepsilon \times (0, t_1].$ In the third case, as shown e.g.  in \cite[\S 2, Ch. 1]{Lad_Sol_Ura_1968} the following relations hold at  $P_0$:
\begin{equation}\label{e_5}
  \partial_t y_\varepsilon \ge 0, \qquad \nabla_x y_\varepsilon = \vec{0}, \qquad - \sum_{k,l=1}^{3} a_{kl}^{(i)} \,  \frac{\partial^2 y_\varepsilon}{\partial x_k \partial x_l} \ge 0 \quad \text{for} \ \ i \in \{0, 1, 2, 3\}.
\end{equation}
Therefore, it follows from \eqref{e_1}  and \eqref{e_3}  that
\begin{equation}\label{e_6}
   y_\varepsilon  \le   \sum_{i=0}^{3} \ \max_{\overline{\Omega^{(i)}_\varepsilon}\times [0,T]} |F_i|.
\end{equation}

Similarly, considering the point of the smallest non-positive value of the function $y_\varepsilon,$ which is reached  in
$\Omega_\varepsilon \times (0, t_1],$ we get the estimate
 \begin{equation}\label{e_8}
   \max_{\overline{\Omega_\varepsilon}\times [0, T]} |\psi_\varepsilon|  \le  e^{\lambda T}  \sum_{i=0}^{3} \ \max_{\overline{\Omega^{(i)}_\varepsilon}\times [0,T]} |F_i|.
 \end{equation}

{\bf 2.} Now consider the case when the function $y_\varepsilon$ takes the largest positive value at a  point $P_1$ belonging to $\Gamma_\varepsilon \times (0, t_1].$
The boundary conditions \eqref{e_2} and \eqref{e_4} can be rewritten in the form
\begin{equation}\label{e_8+}
  \sum_{j=1}^{3} b_j^{(0)} \, \frac{\partial y_\varepsilon}{\partial x_j}  =  0  \quad
    \text{on} \ \Gamma_\varepsilon^{(0)}\times (0, T),
\end{equation}
where
$$
b_j^{(0)} :=  \sum_{k=1}^{3} a_{kj}^{(0)}\big(\frac{x}{\varepsilon}\big)\, \nu_k\big(\frac{x}{\varepsilon}\big);
$$
and (we restrict ourselves to the lateral surface $\Gamma_\varepsilon^{(1)}$)
\begin{equation}\label{e_9}
  \sum_{j=1}^{3} b_j^{(1)} \, \frac{\partial y_\varepsilon}{\partial x_j}   +  \frac{y_\varepsilon}{\sqrt{1 + \varepsilon^2 |h'_1(x_1)|^2}} \Big(   h'_1 \, v_1^{(1)} - \overline{V}^{(1)}(x_1, \bar{\xi}_1) \cdot \bar{\nu}_{\bar{\xi}_i}\Big|_{\bar{\xi}_1=\frac{\bar{x}_1}{\varepsilon}}\Big)  =
-  e^{\lambda t} \Phi_1  \quad   \text{on} \  \Gamma_\varepsilon^{(1)}\times (0, T),
\end{equation}
where
$$
b_1^{(1)} := - \varepsilon a_{11}^{(1)} \frac{h'_1(x_1)}{\sqrt{1 + \varepsilon^2 |h'_1(x_1)|^2}}, \qquad
b_j^{(1)} :=  \frac{1}{\sqrt{1 + \varepsilon^2 |h'_1(x_1)|^2}} \sum_{k=2}^{3} a_{kj}^{(1)}\big(\frac{\bar{x}_1}{\varepsilon}\big) \, \nu_k\big(\frac{\bar{x}_1}{\varepsilon}\big), \ \ j\in \{2, 3\}.
$$
It is easy to verify that due to \eqref{n1}
\begin{equation}\label{e_10}
  \big(b_1^{(0)}, b_2^{(0)}, b_3^{(0)}\big) \cdot \boldsymbol{\nu}_\varepsilon = \sum_{j, k=1}^{3} a_{kj}^{(0)}\big(\frac{x}{\varepsilon}\big)\, \nu_k\big(\frac{x}{\varepsilon}\big) \, \nu_j\big(\frac{x}{\varepsilon}\big) \ge \kappa_0^{(0)} > 0,
\end{equation}
\begin{equation}\label{e_11}
  \big(b_1^{(1)}, b_2^{(1)}, b_3^{(1)}\big) \cdot \boldsymbol{\nu}_\varepsilon =
    a_{11}^{(1)} \frac{\varepsilon^2  |h'_1(x_1)|^2}{1 + \varepsilon^2 |h'_1(x_1)|^2}
   + \frac{1}{1 + \varepsilon^2 |h'_1(x_1)|^2}\sum_{j, k=2}^{3} a_{kj}^{(1)}\big(\frac{\bar{x}_1}{\varepsilon}\big) \, \nu_k\big(\frac{\bar{x}_1}{\varepsilon}\big) \, \nu_j\big(\frac{\bar{x}_1}{\varepsilon}\big) \ge \frac{\kappa_0^{(1)}}{c_1} > 0.
\end{equation}

Based on the statement of Theorem 7 (\!\cite[\S 1]{Il_Ole_Kal_1962}), the inequalities \eqref{e_10} and \eqref{e_11} mean that
\begin{equation}\label{e_12}
  \sum_{j=1}^{3} b_j^{(i)} \, \frac{\partial y_\varepsilon}{\partial x_j}\Big|_{P_1} > 0,
\end{equation}
where the value of the index $i$ depends on the position of $P_1.$
Thus, the point $P_1$ cannot  lie both  on $\Gamma_\varepsilon^{(0)}\times (0, T)$ and circular strips on the lateral surfaces of the thin cylinders near their bases
$\{\Upsilon_{\varepsilon}^{(i)} (\ell_i)\times (0, T)\}_{i=1}^3$ and $\{\Upsilon_{\varepsilon}^{(i)} (\varepsilon\,\ell_0)\times (0, T)\}_{i=1}^3,$ where $\{h'_i\},$ $\{\overline{V}^{(i)}\},$  $\{\Phi_i\}$ vanish and where this derivative is equal to zero.

Next, for definiteness, we assume that $P_1 \in \Gamma_\varepsilon^{(1)}\times (0, T),$ and to simplify calculations and to see a clearer dependence on  $\varepsilon,$ consider the case when
$\mathbb{D}^{(1)}_\varepsilon $ is  the identity matrix, i.e., $\mathrm{div}_x \big( \mathbb{D}^{(1)}_\varepsilon \nabla_x y_\varepsilon\big) =
\Delta_x y_\varepsilon .$

We introduce a new function $\eta_\varepsilon$ using the following substitution $y_\varepsilon = \eta_\varepsilon \, z_\varepsilon,$ where
\begin{equation*}
  z_\varepsilon(x) = 1 + \varepsilon\, \frac{m\, h_1(x_1)}{2 } \Big( 1 - \Big(\frac{x_2}{\varepsilon h_1(x_1)}\Big)^2 - \Big(\frac{x_3}{\varepsilon h_1(x_1)}\Big)^2\Big)
\end{equation*}
and the constant
\begin{equation}\label{m_1}
  m = 1 +  \max_{\overline{\Omega^{(1)}_\varepsilon}} \Big(|h'_1(x_1)| \, |v_1^{(1)}(x_1)| + \big|\overline{V}^{(1)}(x_1,\frac{\bar{x}_1}{\varepsilon})\big| \Big) .
\end{equation}
It is easy to verify that
\begin{equation}\label{z_1}
  z_\varepsilon\big|_{\Gamma^{(1)}_\varepsilon} =1, \qquad - \partial_{\bar{\nu}} z_\varepsilon\big|_{\Upsilon^{(1)}_\varepsilon(x_1)} = m, \qquad
  1\le z_\varepsilon \le 1  +  \tfrac{1}{2 }\, \varepsilon \, m\, h_1(x_1) \quad \text{in} \ \ \Omega^{(1)}_\varepsilon ,
  \end{equation}
  \begin{equation}\label{z_2}
    |\partial_{x_1} z_\varepsilon| = \Big| \frac{m\, h'_1(x_1)}{2} \Big(\varepsilon + \frac{x_2^2 + x_3^2}{\varepsilon\, h^2_1(x_1)} \Big) \Big| \le \varepsilon\, m\, |h'_1(x_1)|, \qquad |\nabla_{\bar{x}_1 } z_\varepsilon|^2 = \frac{m^2 |\bar{x}_1|^2}{\varepsilon^2 h^2_1(x_1)} \le m^2,
  \end{equation}
  \begin{equation}\label{z_3}
  |\partial^2_{x_1 x_1} z_\varepsilon| \le \varepsilon\, m \Big( |h''_1(x_1)| + \frac{|h'_1(x_1)|^2}{h_1(x_1)} \Big), \qquad
  |\Delta_{\bar{x}_1} z_\varepsilon | =  \varepsilon^{-1} \frac{2m}{h_1(x_1)} .
  \end{equation}

In general case, we should take a function
$$
  z_\varepsilon(x) = 1 + \varepsilon\, \varrho\big(\frac{\bar{x}_1}{\varepsilon\, h_1(x_1)}\big),
$$
where $\varrho$ is any function from  $C^2(\overline{B_1(0)})$  such that
$$
\varrho\big|_{\partial B_1(0)} = 0, \qquad - \big(\tilde{\mathbb{D}}^{(1)}_{h_1(x_1) \bar{\zeta}_1} \nabla_{\bar{\zeta}_1} \varrho\big)\cdot
\bar{\nu}_{\bar{\zeta}_1}\big|_{\partial B_1(0)} = m .
$$
Here $\bar{\nu}_{\bar{\zeta}_1}$ is the outward unit normal to the boundary of the
disk $B_1(0):=\{\bar{\zeta}_1=(\zeta_2, \zeta_3)\in \Bbb R^2\colon |\bar{\zeta}_1| < 1\},$ the matrix $\tilde{\mathbb{D}}^{(1)}$ is determined in \eqref{mat-2D}. Moreover,  estimates of the same order with respect to $\varepsilon$
as in \eqref{z_1} --  \eqref{z_3} hold.

After the substitution, the function $\eta_\varepsilon$ satisfies the differential equation
\begin{multline}\label{eta_1}
  \partial_t \eta_\varepsilon - \varepsilon\, \Delta \eta_\varepsilon + \Big(\frac{2 \varepsilon\, \nabla z_\varepsilon}{z_\varepsilon} + \overrightarrow{V_\varepsilon}^{(1)}\Big)\cdot \nabla \eta_\varepsilon
  \\
  +
  \Big(\frac{\varepsilon\, \Delta  z_\varepsilon}{z_\varepsilon} - \frac{2 \varepsilon\, |\nabla z_\varepsilon|^2}{z^2_\varepsilon}
    - \frac{ v_1^{(1)}\, \partial_{x_1} z_\varepsilon + \varepsilon^2 \, \overline{V}^{(1)}\cdot \nabla_{\bar{x}_1} z_\varepsilon}{z_\varepsilon}
   + \frac{d v_1^{(1)}}{d x_1} + \mathrm{div}_{\bar{\xi}_1}  \overline{V}^{(1)}(x_1, \bar{\xi}_1)\Big|_{\bar{\xi}_1=\frac{\bar{x}_1}{\varepsilon}} +  \lambda   \Big) \eta_\varepsilon = z_\varepsilon \, e^{-\lambda t}\, F_1
\end{multline}
in $\Omega^{(1)}_\varepsilon\times (0, T)$, and the boundary condition
\begin{equation}\label{eta_2}
  \sum_{j=1}^{3} b_j^{(1)} \, \frac{\partial \eta_\varepsilon}{\partial x_j}   +   \frac{\eta_\varepsilon}{\sqrt{1 + \varepsilon^2 |h'_1(x_1)|^2}} \Big( m  + \varepsilon \, m \, |h'_1(x_1)|^2 +  h'_1(x_1)\, v_1^{(1)}(x_1) -   \overline{V}^{(1)}(x_1, \bar{\xi}_1) \cdot \bar{\nu}_{\bar{\xi}_i}\Big|_{\bar{\xi}_1=\frac{\bar{x}_1}{\varepsilon}}\Big)  =
-  e^{-\lambda t}\, z_\varepsilon\, \Phi_1
\end{equation}
on $ \Gamma_\varepsilon^{(1)}\times (0, T),$ where now
$$
b_1^{(1)} := - \varepsilon  \frac{h'_1(x_1)}{\sqrt{1 + \varepsilon^2 |h'_1(x_1)|^2}}, \qquad
b_j^{(1)} :=  \frac{1}{\sqrt{1 + \varepsilon^2 |h'_1(x_1)|^2}}  \, \nu_j\big(\frac{\bar{x}_1}{\varepsilon}\big), \ \ j\in \{2, 3\}.
$$

Due to \eqref{z_1} --  \eqref{z_3}  we can take the constant $\lambda$ independently of $\varepsilon$ in such a way that the coefficient at $\eta_\varepsilon$ in \eqref{eta_1} is bounded below by a positive constant.  And thanks to our  choice  of the constant $m$ (see \eqref{m_1}),  for all $\varepsilon \in (0, 1)$ and all $x_1 \in [0, \ell_1]$
$$
m  + \varepsilon \, m \, |h'_1(x_1)|^2 +  h'_1(x_1)\, v_1^{(1)}(x_1) -   \overline{V}^{(1)}(x_1, \bar{\xi}_1) \cdot \bar{\nu}_{\bar{\xi}_i}\Big|_{\bar{\xi}_1=\frac{\bar{x}_1}{\varepsilon}} \ge 1.
$$
In virtue of the first and third relations in \eqref{z_1} the function $\eta_\varepsilon$ takes also the largest positive value at the  point $P_1$. Therefore, it follows from \eqref{e_12} and \eqref{eta_2} that
$$
 \eta_\varepsilon(P_1) \le - \Big(\sqrt{1 + \varepsilon^2 |h'_1(x_1)|^2} \, z_\varepsilon\,  e^{-\lambda t}\,  \Phi_1\Big)\Big|_{P_1}.
$$

Similarly, we consider the case when the function $y_\varepsilon$ reaches its smallest non-positive value  at a point
$P_2 \in \Gamma^{(1)}_\varepsilon \times (0, t_1].$ Since
$\sum_{j=1}^{3} b_j^{(1)} \, \partial_{x_j} y_\varepsilon \big|_{P_2} \le 0,$  we get the opposite inequality
$$
 \eta_\varepsilon(P_2) \ge - \Big(\sqrt{1 + \varepsilon^2 |h'_1(x_1)|^2} \, z_\varepsilon\,  e^{-\lambda t}\,  \Phi_1\Big)\Big|_{P_2}.
$$
Thus, in the second case
$$
\max_{\overline{\Omega_\varepsilon}\times [0, t_1]} |y_\varepsilon|  \le \max_{\overline{\Gamma^{(1)}_\varepsilon}\times (0, t_1]} |y_\varepsilon|
\le C_1 \, \max_{\overline{\Gamma^{(1)}_\varepsilon}\times (0, T]} |\Phi_1 (x, t)|.
$$
The lemma is proved.
\end{proof}

Let us now prove an a priori estimate for the gradient of the solution in $L^2$-norm.

\begin{corollary}\label{apriory_estimate} For the solution $\psi_\varepsilon$  to the problem \eqref{probl+} the estimate
\begin{equation}\label{appriory_estimate}
\| {\nabla_x \psi_\varepsilon}\|_{L^2(\Omega_\varepsilon\times (0, T))} \le
 C_2(T) \bigg(\sum_{i=0}^{3}\max_{\overline{\Omega^{(i)}_\varepsilon}\times [0, T]} |F_i| +  \sum_{i=1}^{3}\max_{\overline{\Gamma^{(i)}_\varepsilon}\times [0, T]} |\Phi_i| \bigg) ,
  \end{equation}
is satisfied,  where the constant $C_2$ does not depend on  $\varepsilon, \, \psi_\varepsilon, \, \{F_i\}, \, \{\Phi_i\}.$
\end{corollary}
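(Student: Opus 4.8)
The plan is to combine the uniform bound of Lemma~\ref{the maximum principle} with a standard energy identity, keeping careful track of the powers of $\varepsilon$ generated by the thinness of $\Omega_\varepsilon$. Abbreviate $\mathcal{F}:=\sum_{i=0}^{3}\max_{\overline{\Omega^{(i)}_\varepsilon}\times[0,T]}|F_i|+\sum_{i=1}^{3}\max_{\overline{\Gamma^{(i)}_\varepsilon}\times[0,T]}|\Phi_i|$; then Lemma~\ref{the maximum principle} already gives $\|\psi_\varepsilon\|_{L^\infty(\Omega_\varepsilon\times(0,T))}\le C_0(T)\,\mathcal{F}$. Since each thin cylinder has cross-sections of radius $\mathcal{O}(\varepsilon)$ and the node equals $\varepsilon\,\Xi^{(0)}$, one has the elementary measure bounds $|\Omega_\varepsilon|\le C\varepsilon^{2}$ and $|\Gamma^{(i)}_\varepsilon|\le C\varepsilon$, which, together with the $L^\infty$ bound, will control every lower-order term below.

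The main step is to multiply each equation of \eqref{probl+} by $\psi_\varepsilon$, integrate over the corresponding $\Omega^{(i)}_\varepsilon$, integrate by parts in the diffusive and convective terms, and sum over $i\in\{0,1,2,3\}$ (equivalently, to test the global weak formulation with $\psi_\varepsilon$). Because $\psi_\varepsilon$ is a classical solution and, by Assumption~{\bf A1}, the coefficients match across the cross-sections $\Upsilon^{(i)}_\varepsilon(\varepsilon\ell_0)$, both $\psi_\varepsilon$ and the normal fluxes $\big(\varepsilon\,\mathbb{D}_\varepsilon\nabla_x\psi_\varepsilon-\psi_\varepsilon\,\overrightarrow{V_\varepsilon}\big)\cdot\boldsymbol{\nu}_\varepsilon$ are continuous there, so the interface integrals cancel and for every $t\in(0,T)$
\[
\tfrac12\frac{d}{dt}\|\psi_\varepsilon(\cdot,t)\|^2_{L^2(\Omega_\varepsilon)}+\varepsilon\!\int_{\Omega_\varepsilon}\!\mathbb{D}_\varepsilon\nabla_x\psi_\varepsilon\cdot\nabla_x\psi_\varepsilon\,dx+\tfrac12\!\int_{\Omega_\varepsilon}\!\big(\mathrm{div}_x\overrightarrow{V_\varepsilon}\big)\psi_\varepsilon^2\,dx+\mathcal{B}_\varepsilon(t)=\int_{\Omega_\varepsilon}\!F\,\psi_\varepsilon\,dx ,
\]
where $F|_{\Omega^{(i)}_\varepsilon}=F_i$ and $\mathcal{B}_\varepsilon(t)$ collects the boundary contributions on $\partial\Omega_\varepsilon=\Gamma_\varepsilon\cup\bigcup_i\Upsilon^{(i)}_\varepsilon(\ell_i)$. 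These vanish on $\Upsilon^{(i)}_\varepsilon(\ell_i)$ since $\psi_\varepsilon=0$, and on $\Gamma^{(0)}_\varepsilon$ since $\mathbb{D}^{(0)}_\varepsilon\nabla_x\psi_\varepsilon\cdot\boldsymbol{\nu}_\varepsilon=0$ and $\overrightarrow{V_\varepsilon}^{(0)}\cdot\boldsymbol{\nu}_\varepsilon=0$; on each $\Gamma^{(i)}_\varepsilon$ ($i\ge1$) I would insert the flux boundary condition of \eqref{probl+}, which turns $\mathcal{B}_\varepsilon(t)$ restricted to $\Gamma^{(i)}_\varepsilon$ into $\int_{\Gamma^{(i)}_\varepsilon}\big(\varepsilon\,\Phi_i\psi_\varepsilon-\tfrac12(\overrightarrow{V_\varepsilon}\cdot\boldsymbol{\nu}_\varepsilon)\psi_\varepsilon^2\big)\,d\sigma$.

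The decisive point is the $\varepsilon$-scaling: by Assumption~{\bf A2} and \eqref{normal_1} one has $\overrightarrow{V_\varepsilon}^{(i)}\cdot\boldsymbol{\nu}_\varepsilon=\mathcal{O}(\varepsilon)$ on $\Gamma^{(i)}_\varepsilon$ (the transverse components of $\overrightarrow{V_\varepsilon}^{(i)}$ carry a factor $\varepsilon$, and the $i$-th component of $\boldsymbol{\nu}_\varepsilon$ is $\mathcal{O}(\varepsilon)$), while $\mathrm{div}_x\overrightarrow{V_\varepsilon}$ is uniformly bounded and vanishes on $\Omega^{(0)}_\varepsilon$. Consequently, using $\|\psi_\varepsilon\|_{L^\infty}\le C_0\mathcal{F}$ together with $|\Omega_\varepsilon|\le C\varepsilon^{2}$ and $|\Gamma^{(i)}_\varepsilon|\le C\varepsilon$, each of the quantities $\int_{\Omega_\varepsilon}F\psi_\varepsilon$, $\int_{\Omega_\varepsilon}(\mathrm{div}_x\overrightarrow{V_\varepsilon})\psi_\varepsilon^2$, $\int_{\Gamma^{(i)}_\varepsilon}(\overrightarrow{V_\varepsilon}\cdot\boldsymbol{\nu}_\varepsilon)\psi_\varepsilon^2$ and $\varepsilon\int_{\Gamma^{(i)}_\varepsilon}\Phi_i\psi_\varepsilon$ is $\le C\varepsilon^{2}\mathcal{F}^2$ in absolute value. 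Bounding the Dirichlet form from below by $\varepsilon\,\kappa_0\,|\nabla_x\psi_\varepsilon|^2$ with $\kappa_0:=\min_i\kappa_0^{(i)}$ via \eqref{n1}, integrating in $t$ over $(0,T)$ and using $\psi_\varepsilon|_{t=0}=0$, one gets $\varepsilon\,\kappa_0\,\|\nabla_x\psi_\varepsilon\|^2_{L^2(\Omega_\varepsilon\times(0,T))}\le C(T)\,\varepsilon^{2}\mathcal{F}^2$, hence $\|\nabla_x\psi_\varepsilon\|^2_{L^2(\Omega_\varepsilon\times(0,T))}\le C(T)\,\varepsilon\,\mathcal{F}^2\le C(T)\,\mathcal{F}^2$ for $\varepsilon\in(0,1)$, which is \eqref{appriory_estimate} (in fact with an additional gain of $\varepsilon^{1/2}$).

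I expect the only genuinely delicate part to be this $\varepsilon$-bookkeeping in $\mathcal{B}_\varepsilon(t)$: since the coercivity constant multiplying $\|\nabla_x\psi_\varepsilon\|^2$ is only $\mathcal{O}(\varepsilon)$, one must verify that every boundary and lower-order term is $\mathcal{O}(\varepsilon^{2}\mathcal{F}^2)$ — or at least $o(\varepsilon\,\mathcal{F}^2)$ — which rests precisely on the anisotropic scaling of $\overrightarrow{V_\varepsilon}$ in Assumption~{\bf A2} (small transverse velocity, no normal flux through $\Gamma^{(0)}_\varepsilon$) and on having the uniform estimate of Lemma~\ref{the maximum principle} available to absorb these terms directly, without any trace or interpolation inequality.
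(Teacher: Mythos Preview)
Your argument is correct and follows the same overall scheme as the paper --- multiply \eqref{probl+} by $\psi_\varepsilon$, integrate, use ellipticity \eqref{n1} for coercivity, and invoke Lemma~\ref{the maximum principle} to close the estimate. The one difference is in how the convection term and the lower-order contributions are handled. The paper keeps the convection term in the form $-\int_{\Omega_\varepsilon}\psi_\varepsilon\,\overrightarrow{V_\varepsilon}\!\cdot\!\nabla\psi_\varepsilon$ and then bounds the right-hand side $\|F_i\|_{L^2}\|\psi_\varepsilon\|_{L^2}$ and $\varepsilon\|\Phi_i\|_{L^2}\|\psi_\varepsilon\|_{L^2}$ via the Poincar\'e-type inequalities \eqref{p_0} and the scaled trace inequality \eqref{p_1}, converting everything into multiples of $\|\nabla\psi_\varepsilon\|_{L^2}$ before using the maximum principle only on the convective term. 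You instead perform one more integration by parts on the convection, turning it into $\tfrac12\int_{\Omega_\varepsilon}(\mathrm{div}_x\overrightarrow{V_\varepsilon})\psi_\varepsilon^2$ plus a boundary term with $\overrightarrow{V_\varepsilon}\!\cdot\!\boldsymbol{\nu}_\varepsilon=\mathcal{O}(\varepsilon)$, and then bound \emph{every} lower-order quantity directly via $\|\psi_\varepsilon\|_{L^\infty}\le C_0\mathcal{F}$ together with the elementary measure bounds $|\Omega_\varepsilon|\le C\varepsilon^2$, $|\Gamma^{(i)}_\varepsilon|\le C\varepsilon$. This is more elementary --- it dispenses with \eqref{p_0} and \eqref{p_1} entirely --- and, as you note, actually yields the sharper bound $\|\nabla_x\psi_\varepsilon\|_{L^2(\Omega_\varepsilon\times(0,T))}\le C\sqrt{\varepsilon}\,\mathcal{F}$. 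The paper's route has the minor advantage that the Poincar\'e and trace estimates are of independent interest elsewhere in the analysis of thin junctions, but for the purpose of proving \eqref{appriory_estimate} your treatment is perfectly adequate.
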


\begin{proof} We multiply the differential equations of the problem \eqref{probl+} with  $\psi_\varepsilon$ and integrate them over
$\Omega_\varepsilon \times (0, \tau),$ where $\tau$ is an arbitrary number from $(0, T).$ Integrating by parts and taking into account the boundary conditions and initial condition, we get
 \begin{multline*}
  \frac{1}{2} \int\limits_{\Omega_\varepsilon}\psi_\varepsilon^2\big|_{t=\tau}\, dx + \varepsilon\int\limits_{0}^{\tau} \int \limits_{\Omega_\varepsilon}
    (\mathbb{D}_\varepsilon {\nabla \psi_\varepsilon}) \cdot {\nabla \psi_\varepsilon} \, dxdt  -  \int\limits_{0}^{\tau} \int \limits_{\Omega_\varepsilon} \psi_\varepsilon\,
    \overrightarrow{V_\varepsilon} \cdot
    {\nabla \psi_\varepsilon} \, dx dt
    \\
    =
    \sum\limits_{i=0}^{3} \
    \int\limits_{0}^{\tau}\int \limits_{\Omega_\varepsilon^{(i)}} F_i \, \psi_\varepsilon \, dx dt
 -  \varepsilon \sum \limits_{i=1}^{3} \
    \int\limits_{0}^{\tau}\int \limits_{\Gamma_\varepsilon^{(i)}} \Phi_i \, \psi_\varepsilon \, dS_x dt.
  \end{multline*}
 In view of the ellipticity conditions \eqref{n1} and the Cauchy-Bunyakovsky-Schwarz inequality,  we have
\begin{multline}\label{s_1}
\varepsilon \sum_{i=0}^{3} \kappa_0^{(i)} \| {\nabla \psi_\varepsilon}\|^2_{L^2(\Omega^{(i)}_\varepsilon\times (0, \tau))}
-   \int\limits_{0}^{\tau} \int \limits_{\Omega_\varepsilon} \psi_\varepsilon\,
    \overrightarrow{V_\varepsilon} \cdot
    {\nabla \psi_\varepsilon} \, dx dt
  \\
  \le
     \sum_{i=0}^{3}
    \|F_i\|_{L^2(\Omega_\varepsilon^{(i)}\times (0, \tau))}  \|\psi_\varepsilon\|_{L^2(\Omega_\varepsilon^{(i)}\times (0, \tau))}
    +
     \varepsilon
  \sum_{i=1}^{3} \|\Phi_i\|_{L^2(\Gamma_\varepsilon^{(i)}\times (0, \tau))}  \|\psi_\varepsilon\|_{L^2(\Gamma_\varepsilon^{(i)}\times (0, \tau))}.
   \end{multline}

With the help of the inequalities
\begin{equation}\label{p_0}
\|\psi_\varepsilon \|_{L^2(\Omega_\varepsilon^{(0)})} \le C_1 \sqrt{\varepsilon} \|\nabla \psi_\varepsilon \|_{L^2(\Omega_\varepsilon)}, \qquad
\|\psi_\varepsilon \|_{L^2(\Omega_\varepsilon^{(i)})} \le C_1  \|\nabla \psi_\varepsilon \|_{L^2(\Omega_\varepsilon^{(i)})}
\end{equation}
(this is due to  the uniform Dirichlet conditions on the bases $\{\Upsilon_{\varepsilon}^{(i)}(\ell_i)\}_{i=1}^3$, for more detail see \cite[Sec.~2]{M-AA-2021})
and  the inequality
\begin{equation}\label{p_1}
 \varepsilon \int_{\Gamma_\varepsilon^{(i)}} v^2 \, d\sigma_x
 \leq C_2 \Bigg( \varepsilon^2 \int_{\Omega_\varepsilon^{(i)}} |\nabla_{x}v|^2 \, dx
 +    \int_{\Omega_\varepsilon^{(i)}} v^2 \, dx \Bigg),  \quad \forall\, v \in H^1(\Omega_\varepsilon^{(i)} \ \ (i\in \{1, 2, 3\}),
\end{equation}
 proved in \cite{M-MMAS-2008}, we obtain from \eqref{s_1} that
 \begin{multline}\label{s_2}
 \varepsilon \sum_{i=0}^{3} \kappa_0^{(i)} \| {\nabla \psi_\varepsilon}\|^2_{L^2(\Omega^{(i)}_\varepsilon\times (0, \tau))}
    \le  C_3 \sqrt{T}\, \varepsilon \, \max_{\overline{\Omega_\varepsilon}\times [0, T]} |\psi_\varepsilon| \, \max_{\overline{\Omega_\varepsilon}} |\overrightarrow{V_\varepsilon}|\,    \|\nabla \psi_\varepsilon\|_{L^2(\Omega_\varepsilon \times (0, \tau))}
    \\
    + C_3 \, \bigg(\sum_{i=1}^{3}  \Big(\|F_i\|_{L^2(\Omega_\varepsilon^{(i)}\times (0, \tau))}  +
    \sqrt{\varepsilon}\, \|\Phi_i\|_{L^2(\Gamma_\varepsilon^{(i)}\times (0, \tau))}\Big)  + \sqrt{\varepsilon}\, \|F_0\|_{L^2(\Omega_\varepsilon^{(0)}\times (0, \tau))}\bigg)   \|\nabla \psi_\varepsilon\|_{L^2(\Omega_\varepsilon \times (0, \tau))}.
 \end{multline}
Using \eqref{max_1}, from  \eqref{s_2}   it follows \eqref{appriory_estimate}.
\end{proof}

The main results follow directly from Proposition~\ref{Prop-3-1}, Lemma~\ref{the maximum principle} and Corollary~\ref{apriory_estimate}.
\begin{theorem}\label{Th_1}
Let the assumptions made in Section~\ref{Sec:Statement}  and Remark~\ref{differ}  hold. Then the series \eqref{asymp_expansion} is the asymptotic expansion for the solution $u_\varepsilon$  to the problem~\eqref{probl} both in the Banach space
  $C(\overline{\Omega}_\varepsilon\times [0,T])$ and the Sobolev space $L^2\big((0,T); H^1(\Omega_\varepsilon)\big);$ and
  for  any $M\in \Bbb N$ there exist $C_M>0$ and $\varepsilon_0>0$ such that for all $\varepsilon\in(0, \varepsilon_0)$ the  estimates
 \begin{equation}\label{main_1}
   \max_{\overline{\Omega_\varepsilon}\times [0, T]} |u_\varepsilon - \mathcal{U}_{M-1}|  \le C_M \, \varepsilon^{M-\gamma}
 \end{equation}
and
\begin{equation}\label{main_2}
\|\nabla_x(u_\varepsilon - \mathcal{U}_{M-1})\|_{L^2(\Omega_\varepsilon\times (0, T))}
    \le C_M \, \varepsilon^{M-1-\gamma}
 \end{equation}

 \medskip
 \noindent
 are satisfied,  where $\mathcal{U}_{M-1}$ is the partial sum of the series \eqref{asymp_expansion} determined in \eqref{aaN}  and $\gamma$ is a fixed number from the interval $(\frac23, 1).$
\end{theorem}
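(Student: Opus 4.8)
The plan is to combine the structural result of Proposition~\ref{Prop-3-1} with the two a priori bounds of Lemma~\ref{the maximum principle} and Corollary~\ref{apriory_estimate}, applied to the remainder $\psi_\varepsilon := \mathcal{U}_M - u_\varepsilon$. By Proposition~\ref{Prop-3-1}, for each fixed $M\in\Bbb N$ and all sufficiently small $\varepsilon$, the function $\psi_\varepsilon$ solves exactly a problem of the form \eqref{probl+}, with right-hand sides $F_i = \varepsilon^{M-\gamma}\mathcal{R}^{(i)}_M$ in the thin cylinders, $F_0 = \varepsilon^M\mathcal{R}^{(0)}_M = \varepsilon^M\partial_t N_M$ in the node, and boundary data $\Phi_i = \varepsilon^{M}\Phi^{(i)}_M$ on the lateral surfaces (and zero on $\Gamma^{(0)}_\varepsilon$ and on the bases). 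All the relevant sup-norms of $\mathcal{R}^{(i)}_M$, $\mathcal{R}^{(0)}_M$ and $\Phi^{(i)}_M$ are bounded uniformly in $\varepsilon$ by \eqref{Res_10}--\eqref{Res_11}, and the support condition on $\Phi^{(i)}_M$ required by Lemma~\ref{the maximum principle} holds. Hence $\psi_\varepsilon$ is a legitimate input to both estimates, and the role of the hypotheses of Section~\ref{Sec:Statement} and Remark~\ref{differ} is exactly to guarantee that all the asymptotic coefficients exist, are smooth, and vanish appropriately at $t=0$, so that $\mathcal{U}_M$ leaves no residual in the initial and Dirichlet conditions.

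Next I would plug these data into \eqref{max_1}: since the dominant contribution among $\max|F_i|$, $\max|F_0|$, $\max|\Phi_i|$ is of order $\varepsilon^{M-\gamma}$ (the node term $\varepsilon^M\partial_t N_M$ and the lateral term $\varepsilon^M\Phi^{(i)}_M$ are of higher order because $\gamma<1$), the maximum principle yields
\begin{equation*}
\max_{\overline{\Omega_\varepsilon}\times[0,T]}|\mathcal{U}_M - u_\varepsilon| \le C\,\varepsilon^{M-\gamma}.
\end{equation*}
Likewise, feeding the same data into \eqref{appriory_estimate} of Corollary~\ref{apriory_estimate} gives
\begin{equation*}
\|\nabla_x(\mathcal{U}_M - u_\varepsilon)\|_{L^2(\Omega_\varepsilon\times(0,T))}\le C\,\varepsilon^{M-\gamma}.
\end{equation*}
To obtain the stated estimates \eqref{main_1}--\eqref{main_2} for the partial sum $\mathcal{U}_{M-1}$, I would use the triangle inequality $\mathcal{U}_{M-1} - u_\varepsilon = (\mathcal{U}_M - u_\varepsilon) - \varepsilon^M(\widehat u_M + \widehat N_M + \widehat\Pi_M)$, together with the uniform $C$- and $H^1$-boundedness of the $M$-th coefficients (the $w^{(i)}_M$, $u^{(i)}_M$ are smooth and $\varepsilon$-uniformly bounded on their compact domains, $N_M$ and its gradient have the exponential-decay behaviour \eqref{rem_exp-decrease} making $\|\nabla_x(\varepsilon^M\widehat N_M)\|_{L^2}=O(\varepsilon^{M-1})$ after the change of variables $\xi=x/\varepsilon$, and $\Pi^{(3)}_M$ decays by \eqref{exp_decay}). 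The extra term contributes $O(\varepsilon^M)$ in the sup-norm and $O(\varepsilon^{M-1})$ in the $L^2$-gradient norm, which is absorbed into the already-present $\varepsilon^{M-\gamma}$ and $\varepsilon^{M-1-\gamma}$ respectively since $\gamma<1$; this produces exactly \eqref{main_1} and \eqref{main_2}. Finally, the claim that \eqref{asymp_expansion} is an asymptotic expansion in $C(\overline\Omega_\varepsilon\times[0,T])$ and in $L^2((0,T);H^1(\Omega_\varepsilon))$ follows by letting $M$ be arbitrary: the difference to the $(M-1)$-st partial sum is $o(\varepsilon^{M-1})$.

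The main obstacle here is not in this final assembly, which is essentially a two-line combination of earlier results, but in verifying that every hypothesis of Lemma~\ref{the maximum principle} and Corollary~\ref{apriory_estimate} is genuinely met by $\psi_\varepsilon$ — in particular the continuity and boundedness of all $F_i,\Phi_i$ (which needs the smoothness assumptions of Remark~\ref{differ} propagated through the recursion for $w^{(i)}_k$, $u^{(i)}_k$, $N_k$, $\Pi^{(3)}_k$), the vanishing of $\psi_\varepsilon$ at $t=0$ and on the bases (which needs the matching conditions \eqref{match_conditions}, \eqref{match_cond_3}, \eqref{match_cond_5+}, \eqref{match_cond_5} together with $N_k|_{t=0}=0$, $u^{(i)}_k|_{t=0}=0$, $\Pi^{(3)}_k|_{t=0}=0$), and the support localization of $\Phi^{(i)}_M$ away from the bases. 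The slightly delicate bookkeeping step is tracking the powers of $\varepsilon$ in the $H^1$ estimate across the three regions, since the node-layer coefficient $\varepsilon^M N_M(x/\varepsilon,t)$ loses one power of $\varepsilon$ upon differentiation in $x$; but the gain from $\gamma<1$ and the exponential decay \eqref{rem_exp-decrease} of $\widetilde N_M$ on the transition zone $x_i\in[2\ell_0\varepsilon^\gamma,3\ell_0\varepsilon^\gamma]$ comfortably covers this, so no sharpness is lost.
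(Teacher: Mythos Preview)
Your proposal is correct and follows essentially the same route as the paper: apply Lemma~\ref{the maximum principle} and Corollary~\ref{apriory_estimate} to the remainder $\mathcal{U}_M-u_\varepsilon$ via Proposition~\ref{Prop-3-1} to obtain the bounds with index $M$, then use the triangle inequality and the uniform control of the $M$-th coefficients (with the volume factor in the $L^2$ case) to pass to $\mathcal{U}_{M-1}$. Your discussion of the $\varepsilon^{M-1}$ loss from differentiating the fast-variable terms and of the verifications needed for the hypotheses of the a priori lemmas is in fact more explicit than the paper's own short proof.
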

\begin{proof}
 In fact, from Proposition~\ref{Prop-3-1},  Lemma~\ref{the maximum principle} and Corollary~\ref{apriory_estimate} we get the estimates
 \begin{equation}\label{main_1+}
   \max_{\overline{\Omega_\varepsilon}\times [0, T]} |u_\varepsilon - \mathcal{U}_{M}|  \le C_M \, \varepsilon^{M-\gamma},
 \end{equation}
\begin{equation}\label{main_2+}
\|\nabla_x(u_\varepsilon - \mathcal{U}_{M})\|_{L^2(\Omega_\varepsilon\times (0, T))}
    \le C_M \, \varepsilon^{M-\gamma}.
 \end{equation}
Then, from \eqref{main_1+} we deduce
$$
\max_{\overline{\Omega_\varepsilon}\times [0, T]} |u_\varepsilon - \mathcal{U}_{M-1}| \le
\max_{\overline{\Omega_\varepsilon}\times [0, T]} |u_\varepsilon - \mathcal{U}_{M}|
     + \varepsilon^M \Big(\max_{\overline{\Omega_\varepsilon}\times [0, T]} |\widehat{u}_M| + \max_{\overline{\Omega_\varepsilon}\times [0, T]} |\widehat{N}_M| +
\max_{\overline{\Omega_\varepsilon}\times [0, T]} |\widehat{\Pi}_M|\Big)
\le C_M \, \varepsilon^{M-\gamma},
$$
with a new constant $C_M;$ here the coefficients $\widehat{u}_M,$ $\widehat{N}_M,$ and $\widehat{\Pi}_M$ are determined in \eqref{asymp_expansion}.

Similarly, we get \eqref{main_2} from \eqref{main_2+}, but here we need to take into account the volume of the domains over which we integrate. The estimates \eqref{main_1} and \eqref{main_2} obtained for any $M\in \Bbb N$ mean that  the series \eqref{asymp_expansion} is the asymptotic expansion for the solution $u_\varepsilon$ in the spaces mentioned above.
\end{proof}

It is clear that for applied problems there is no need to construct a complete asymptotic expansion for the solution, but it is enough to take an approximation of the solution with the required accuracy. Then weaker assumptions about the smoothness of the coefficients and given functions can be also expected.

For example, if $M=1$ in \eqref{main_1} and \eqref{main_2}, then we obtain the corresponding estimates for the difference between the solution $u_\varepsilon$ and the zero-order approximation
\begin{equation}\label{zero_app}
 \mathcal{U}_0 =
\left\{
  \begin{array}{ll}
    w_0^{(i)} (x_i, t), & x \in \Omega^{(i)}_\varepsilon \cap \{x\colon x_i\in [3\ell_0 \varepsilon^\gamma, \ell_i]\}, \ \ i\in\{1, 2\},
\\[4pt]
    w_0^{(3)} (x_3, t) + \chi_\delta^{(3)} (x_3) \, \Pi_0^{(3)}\big(\frac{\ell_3 - x_3}{\varepsilon}, t \big), & x \in \Omega^{(3)}_\varepsilon \cap \{x\colon x_3\in [3\ell_0 \varepsilon^\gamma, \ell_3]\},
 \\
    N_0\big(\frac{x}{\varepsilon}, t\big), &x \in \Omega^{(0)}_\varepsilon \cup \Big(\bigcup\limits_{i=1}^3 \Omega^{(i)}_\varepsilon \cap \{x\colon x_i\in [\varepsilon \ell_0,  2\ell_0 \varepsilon^\gamma]\}\Big),
\\
\chi_{\ell_0}^{(i)}\left(\frac{x_i}{\varepsilon^\gamma}\right)  w_0^{(i)} (x_i, t) +
\big(1- \chi_{\ell_0}^{(i)}\left(\frac{x_i}{\varepsilon^\gamma}\right)\big) N_0\big(\frac{x}{\varepsilon}, t\big), &x \in
\Omega^{(i)}_\varepsilon \cap \{x\colon x_i\in [2\ell_0 \varepsilon^\gamma, 3\ell_0 \varepsilon^\gamma]\}, \ \ i\in\{1, 2,3\},
  \end{array}
\right.
\end{equation}
where $\{w_0^{(i)}\}_{i=1}^3$ form the solution to the limit problem \eqref{limit_prob} and they are represented by formulas \eqref{model_solution_w_0}, the coefficient $\Pi_0^{(3)}$ is determined in \eqref{Pi_0}, and $N_0$ is the solution to problem \eqref{N_0_prob},
the cut-off functions $\chi_\delta^{(3)}, \  \chi_{\ell_ 0}^{(i)}$ are  defined by formulas \eqref{cut-off_functions}, the variable $t\in [0, T].$

As follows from the proof of Theorem~\ref{Th_1}, to obtain the estimate
\begin{equation}\label{main_3}
   \max_{\overline{\Omega_\varepsilon}\times [0, T]} |u_\varepsilon - \mathcal{U}_{0}|  \le C_1 \, \varepsilon^{1-\gamma},
 \end{equation}
we should construct the partial sum $\mathcal{U}_{1}.$ This means that $\{w_1^{(i)}\}_{i=1}^3$ must be classical solutions to the corresponding problems (see \eqref{prob_w_1}) and $\{w_0^{(i)}\}_{i=1}^3$ must then have the $C^2$-smoothness. Therefore, the following statement holds.
\begin{corollary}\label{Coro_5_2}
 Let, in addition to the  assumptions {\bf A1} -- {\bf A3}, the following ones are satisfied:
$$
v_i^{(i)} \in C^2([0, \ell_i]), \quad   q''_i(0)=0, \quad  \partial_t \varphi^{(i)}\big|_{t=0}=0 \quad \text{for} \ \  i\in \{ 1,2,3\},
$$
and $\varphi^{(0)}$ belongs to $C^2$ in $t\in [0,T]$ and $\partial_t \varphi^{(0)}\big|_{t=0}=\partial^2_{tt} \varphi^{(0)}\big|_{t=0}=0.$
Then the inequality \eqref{main_3} hold.
\end{corollary}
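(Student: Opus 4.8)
The plan is to run the very same machinery that proved Theorem~\ref{Th_1}, but to track exactly which regularity of the data is consumed when one only wants to reach the first-order partial sum $\mathcal{U}_1$. First I would observe that the estimate \eqref{main_3} is, by the argument at the end of the proof of Theorem~\ref{Th_1}, a consequence of the two estimates of Proposition~\ref{Prop-3-1} applied with $M=1$ together with Lemma~\ref{the maximum principle} and Corollary~\ref{apriory_estimate}: indeed $|u_\varepsilon-\mathcal{U}_0|\le|u_\varepsilon-\mathcal{U}_1|+\varepsilon\,(|\widehat u_1|+|\widehat N_1|+|\widehat\Pi_1|)$, and the maximum principle bounds $|u_\varepsilon-\mathcal{U}_1|$ by $C\varepsilon^{1-\gamma}$ once the residuals $\mathcal{R}^{(i)}_1$, $\Phi^{(i)}_1$ are uniformly bounded. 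So the whole task reduces to checking that all the coefficients entering $\mathcal{U}_1$ — namely $\{w_0^{(i)}\}$, $\{u_1^{(i)}\}$, $N_0$, $\{w_1^{(i)}\}$, $\{u_2^{(i)}\}$, $N_1$, and $\Pi_0^{(3)}$, $\Pi_1^{(3)}$ — can be constructed with enough smoothness under the weakened hypotheses, so that those residuals are continuous and bounded on the closed sets in question.

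Next I would trace the smoothness budget coefficient by coefficient. The equation \eqref{lim_2} for $w_1^{(i)}$ contains the term $a_{ii}^{(i)}\partial_{x_i}(h_i^2\,\partial_{x_i}w_{k-1}^{(i)})$, i.e. it needs $w_0^{(i)}\in C^2$ in $x_i$; and the residual $\mathcal{R}^{(i)}_1$ of \eqref{Res_1+} with $M=1$ contains $(w_0^{(i)}+u_0^{(i)})''$ (here $u_0^{(i)}\equiv0$) and first $x_i$- and $t$-derivatives of $w_1^{(i)}$ and $u_1^{(i)}$, hence one needs $w_0^{(i)}\in C^{2,1}$, $w_1^{(i)}\in C^{1,1}$, $u_1^{(i)},u_2^{(i)}$ of the corresponding regularity. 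Going back through \S\ref{sub_limit_problem}, the representation \eqref{model_solution_w_0} shows that $w_0^{(i)}$, $i\in\{1,2\}$, is $C^2$ in $(x_i,t)$ precisely when the matching conditions of the model problem \eqref{match_cond_1} hold up to one more order, which translates into $q_i(0)=q_i'(0)=q_i''(0)=0$ and $\partial_t\varphi^{(i)}|_{t=0}=0$, plus $v_i^{(i)}\in C^2$ so that the characteristic $\mathcal{V}$ and $\mathcal{V}^{-1}$ are $C^2$; for $w_0^{(3)}$ one additionally needs $\partial_t\breve\varphi^{(0)}(0)=0$ and $\partial_{tt}\breve\varphi^{(0)}(0)=0$, i.e. $\varphi^{(0)}\in C^2$ in $t$ with $\partial_t\varphi^{(0)}|_{t=0}=\partial_{tt}\varphi^{(0)}|_{t=0}=0$, together with $\partial_t w_0^{(i)}(0,0)=0$, which is automatic from \eqref{model_solution_w_0} and {\bf A3}. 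These are exactly the extra hypotheses listed in the statement, so the book-keeping closes. Then, as in \S\ref{sub_limit_problem} and \S\ref{par_222}, the inhomogeneous Neumann problems \eqref{eq_1}--\eqref{bc_1} and \eqref{eq_3}--\eqref{bc_3} have unique solutions $u_1^{(i)},u_2^{(i)}$ of the needed regularity, and the junction problems \eqref{N_0_prob}, \eqref{N_k_prob} $(k=1)$ have unique solutions $N_0,N_1\in\mathcal{H}_\beta$ with the exponential decay \eqref{rem_exp-decrease+0}, \eqref{rem_exp-decrease+1}; the boundary-layer coefficients $\Pi_0^{(3)},\Pi_1^{(3)}$ are given explicitly in \eqref{Pi_0} and below \eqref{prim+probl+1+F} and only need $\Phi_0\in C^1$ in $t$, which is covered. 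Finally, the matching-zone residual computation of \S\ref{Sec:justification} only uses the Taylor expansion of $w_0^{(i)},w_1^{(i)}$ at $x_i=0$ to first order and the decay of $\widetilde N_0,\widetilde N_1$, so it goes through verbatim, giving the $\varepsilon^{1-\gamma}$ loss from the scale $\varepsilon^\gamma$ of the cut-off.

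With those facts in hand, the conclusion is immediate: Proposition~\ref{Prop-3-1} holds with $M=1$ under the weakened assumptions, so $\mathcal{U}_1-u_\varepsilon$ solves a problem of the form \eqref{probl+} with right-hand sides bounded by $C\varepsilon^{1-\gamma}$ (after dividing the interior residual by the extra $\varepsilon^\gamma$, and noting $M+1=2\ge1$ for the lateral residual), whence Lemma~\ref{the maximum principle} gives $\max|\mathcal{U}_1-u_\varepsilon|\le C\varepsilon^{1-\gamma}$, and the triangle-inequality step above yields \eqref{main_3}. I expect the only genuinely delicate point to be the smoothness accounting for $w_0^{(3)}$: because its boundary datum at $x_3=0$ is built from $\breve\varphi^{(0)}$ and the traces $w_0^{(1)}(0,t),w_0^{(2)}(0,t)$ through the Kirchhoff relation \eqref{cong_cond}, one must verify that the second-order matching condition for the model problem \eqref{prob_1} — namely $q'(0)=\Psi(0,0)$ differentiated once more in $t$ — reduces exactly to $\partial_{tt}\varphi^{(0)}|_{t=0}=0$ together with the already-known vanishing of $\partial_t w_0^{(1)}(0,0)$ and $\partial_t w_0^{(2)}(0,0)$; everything else is a routine, if tedious, check that mirrors \S\ref{sub_limit_problem}--\S\ref{par_222} with ``$C^\infty$'' replaced by ``$C^2$ in the relevant variables.''
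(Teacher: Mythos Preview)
Your proposal is correct and follows essentially the same approach as the paper: the paper's entire justification is the brief paragraph immediately preceding the corollary, which observes that, by the proof of Theorem~\ref{Th_1}, estimate \eqref{main_3} requires only the construction of the partial sum $\mathcal{U}_1$, hence one needs $\{w_1^{(i)}\}$ to be classical solutions of \eqref{prob_w_1} and $\{w_0^{(i)}\}$ to be of class $C^2$, and the listed extra hypotheses are precisely what this entails. Your account is considerably more detailed than the paper's sketch --- in particular your tracing of the matching conditions for $w_0^{(3)}$ through the Kirchhoff datum --- but the underlying strategy (Proposition~\ref{Prop-3-1} with $M=1$, Lemma~\ref{the maximum principle}, then the triangle-inequality step from the proof of Theorem~\ref{Th_1}) is identical.
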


Error estimates in the $L^2$-norm for thin domains must be written in the rescaled form, namely, they must be divided by the volume of the corresponding thin domain. Therefore, the first appropriate approximation for the gradient of the solution is as follows
\begin{equation}\label{grad_1}
\tfrac{1}{\sqrt{|\Omega_\varepsilon|}}\, \|\nabla_x(u_\varepsilon - \mathcal{U}_{1})\|_{L^2(\Omega_\varepsilon\times (0, T))}
    \le C_1 \, \varepsilon^{1-\gamma},
\end{equation}
where $|\Omega_\varepsilon|$ is  the Lebesque measure of $\Omega_\varepsilon,$ which  obviously has the order $\varepsilon^2.$
Taking the argumentations before Corollary~\ref{Coro_5_2} into account, we conclude the following.
\begin{corollary}\label{Coro_5_3}
 Let, in addition to the  assumptions {\bf A1} -- {\bf A3}, the following ones are satisfied:
$$
v_i^{(i)} \in C^3([0, \ell_i]), \quad   q''_i(0)=q'''_i(0)=0, \quad  \partial_t \varphi^{(i)}\big|_{t=0}=\partial^2_{tt} \varphi^{(i)}\big|_{t=0}=0 \quad \text{for} \ \  i\in \{ 1,2,3\},
$$
and $\varphi^{(0)}$ belongs to $C^3$ in $t\in [0,T]$ and $\partial_t \varphi^{(0)}\big|_{t=0}=\partial^2_{tt} \varphi^{(0)}\big|_{t=0}=\partial^3_{ttt} \varphi^{(0)}\big|_{t=0}=0.$

Then the inequality \eqref{grad_1} hold.
\end{corollary}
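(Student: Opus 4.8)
The plan is to observe that, carried out at the single approximation level $M=2$, the machinery that proves Theorem~\ref{Th_1} — namely Proposition~\ref{Prop-3-1}, Lemma~\ref{the maximum principle} and Corollary~\ref{apriory_estimate} — produces \eqref{grad_1}, so the only genuinely new point is to verify that this machinery still runs once the $C^{\infty}$-hypotheses of Remark~\ref{differ} are weakened to the finite-smoothness assumptions of the corollary. First I would check that the partial sum $\mathcal{U}_{2}$ of \eqref{aaN} is well defined: it involves only the coefficients $w_{0}^{(i)},w_{1}^{(i)},w_{2}^{(i)}$, $u_{1}^{(i)},u_{2}^{(i)},u_{3}^{(i)}$, $N_{0},N_{1},N_{2}$, $\Pi_{0}^{(3)},\Pi_{1}^{(3)},\Pi_{2}^{(3)}$, together with the boundedness of $\partial_{t}N_{0},\partial_{t}N_{1}$, $\dot{\widetilde{N}}_{0},\dot{\widetilde{N}}_{1}$ and the exponential decays \eqref{rem_exp-decrease}, \eqref{exp_decay} for these indices.

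The substantive step is to follow the recursion of \S\ref{sub_limit_problem}--\S\ref{par_223} (and \S\ref{sub_limit_problem+} in the second flow regime) and to record, level by level, which regularity of the data is consumed. The binding requirement is that $w_{2}^{(i)}$ be a classical solution of the transport problem \eqref{prob_w_k} with $k=2$; since its right-hand side ${\bf g}_{2}^{(i)}$ contains $\partial_{x_i}\!\big(h_{i}^{2}\,\partial_{x_i}w_{1}^{(i)}\big)$, $\widehat{u}_{2}^{(i)}$ and $\partial_{x_i}\widehat{u}_{1}^{(i)}$, one needs $w_{1}^{(i)}$ of class $C^{2}$ and $w_{0}^{(i)}$ of class $C^{3}$ in $x_i$, which — reading backwards through \eqref{prob_w_1} and the explicit representation \eqref{model_solution_w_0}, and recalling that the regularity of $w_{0}^{(i)}$ is limited by that of the characteristic map $\mathcal V$ (hence of $v_{i}^{(i)}$), of $\widehat\varphi^{(i)}$ and of $q_{i}$ — is ensured exactly by $v_{i}^{(i)}\in C^{3}([0,\ell_{i}])$ and by the matching conditions $q_{i}''(0)=q_{i}'''(0)=0$, $\partial_{t}\varphi^{(i)}|_{t=0}=\partial^{2}_{tt}\varphi^{(i)}|_{t=0}=0$, together with the vertex compatibility coming from $\varphi^{(0)}\in C^{3}$ in $t$ with $\partial_{t}\varphi^{(0)}|_{t=0}=\partial^{2}_{tt}\varphi^{(0)}|_{t=0}=\partial^{3}_{ttt}\varphi^{(0)}|_{t=0}=0$. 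Under this regularity the cell problems \eqref{eq_1}--\eqref{bc_3} and the node-layer problems \eqref{N_0_prob}--\eqref{N_k_prob} for $k\le 2$ are classically solvable — the solvability relation \eqref{cong_cond_g} being automatic for $k=0,1,2$ because the traces $w_{k}^{(i)}(0,t)$ were prescribed via \eqref{cong_cond}, \eqref{d_1}, \eqref{d_k} — and $N_{k},\dot{\widetilde{N}}_{k}$ still decay exponentially for $k\le1$.

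With $\mathcal{U}_{2}$ available, the computations of Section~\ref{Sec:justification} carry over verbatim and give Proposition~\ref{Prop-3-1} with $M=2$: the difference $\mathcal{U}_{2}-u_{\varepsilon}$ solves the system \eqref{formal_solution} with data bounded by $C\varepsilon^{2-\gamma}$ in the thin cylinders, $C\varepsilon^{3}$ on the lateral surfaces and $C\varepsilon^{2}$ in the node, uniformly in $\varepsilon$. Applying Lemma~\ref{the maximum principle} and Corollary~\ref{apriory_estimate} to $\psi_{\varepsilon}:=\mathcal{U}_{2}-u_{\varepsilon}$ yields $\|\nabla_{x}(u_{\varepsilon}-\mathcal{U}_{2})\|_{L^{2}(\Omega_\varepsilon\times(0,T))}\le C\varepsilon^{2-\gamma}$. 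Finally I would write $u_{\varepsilon}-\mathcal{U}_{1}=(u_{\varepsilon}-\mathcal{U}_{2})+\varepsilon^{2}\big(\widehat{u}_{2}+\widehat{N}_{2}+\widehat{\Pi}_{2}\big)$ and observe that the gradient of the last group is bounded in $L^{2}(\Omega_\varepsilon\times(0,T))$ by $C\varepsilon^{2}$, the dominant contribution being $\varepsilon^{2}\varepsilon^{-1}\|\nabla_{\bar\xi}u_{2}^{(i)}(x_i,\tfrac{\bar x_i}{\varepsilon},t)\|_{L^{2}(\Omega^{(i)}_\varepsilon\times(0,T))}=O(\varepsilon^{2})$, while the node- and boundary-layer pieces are of higher order thanks to \eqref{rem_exp-decrease}, \eqref{exp_decay} and the smallness of $|\Omega^{(0)}_\varepsilon|$ and of the cut-off strips. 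Hence $\|\nabla_{x}(u_{\varepsilon}-\mathcal{U}_{1})\|_{L^{2}(\Omega_\varepsilon\times(0,T))}\le C\varepsilon^{2-\gamma}$, and dividing by $\sqrt{|\Omega_\varepsilon|}$, which is of order $\varepsilon$, produces \eqref{grad_1}.

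The main obstacle I anticipate is the bookkeeping of the second step: making it precise that the listed finite-regularity hypotheses are exactly what is needed for $w_{0}^{(i)},w_{1}^{(i)},w_{2}^{(i)}$ to be classical solutions whose derivatives appearing in the residuals $\mathcal{R}^{(i)}_{2}$, $\mathcal{R}^{(0)}_{2}=\partial_{t}N_{2}$ and $\Phi^{(i)}_{2}$ of Proposition~\ref{Prop-3-1} are bounded uniformly in $\varepsilon$, and that the compatibility conditions at $t=0$ propagate correctly through the node-layer solvability data in \eqref{cong_cond}, \eqref{d_1}, \eqref{d_k}. Everything else is a line-by-line repetition of the arguments already carried out in Sections~\ref{Sec:justification} and~\ref{A priori estimates}.
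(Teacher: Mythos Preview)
Your proposal is correct and follows essentially the same route as the paper: the paper does not write out a separate proof of this corollary but simply refers back to ``the argumentations before Corollary~\ref{Coro_5_2}'', meaning one runs the proof of Theorem~\ref{Th_1} at level $M=2$, checks that the finite-regularity hypotheses suffice to construct $\mathcal{U}_{2}$ (so that $w_{2}^{(i)}$ is a classical solution, forcing $w_{1}^{(i)}\in C^{2}$ and $w_{0}^{(i)}\in C^{3}$), and then divides the resulting $L^{2}$-gradient bound by $\sqrt{|\Omega_\varepsilon|}\sim\varepsilon$. One small slip: $\mathcal{U}_{2}$ does not involve $u_{3}^{(i)}$ --- only $u_{1}^{(i)},u_{2}^{(i)}$ enter the partial sum \eqref{aaN} and the residuals \eqref{Res_1+}, \eqref{Res_4} at $M=2$ --- but this does not affect your argument.
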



\section{Conclusions}

{\bf 1.}   In the introduction, we have outlined the problems of purely one-dimensional, possibly  nonlinear transport modelling on graphs which root in the  inherently incomplete understanding of multidimensional effects at graph's vertices. Our approach  overcomes (in the simpler linear case) these problems by a mixed-dimensional modelling.
Indeed, even  our  zeroth-order approximation \eqref{zero_app}  contains  the coefficient $N_0$ that is the solution to the three-dimensional problem \eqref{N_0_prob} at the node, which takes into account the local geometric irregularity of the node,  physical processes inside, and  the diffusion flow through the boundary condition
\begin{equation}\label{6_1}
  -\sigma_\xi(N_0(\xi,t))  =  \varphi^{(0)}(\xi,t)
\end{equation}
at the node boundary $\Gamma_0,$ and the coefficient  $\Pi_0$ that is the solution to the boundary-layer problem~\eqref{prim+probl+0} near the base of the cylinder $\Omega_\varepsilon^{(3)}.$ In addition, the impact of the given function $\varphi^{(0)}$
is also manifested in the Kirchhoff transmission condition
\begin{equation}\label{6_2}
  \sum_{i=1}^{3}  h_i^2(0) \,\mathrm{v}_i \,  w_0^{(i)}(0, t) = \breve{\varphi}^{(0)}(t)
\end{equation}
in the limit problem \eqref{limit_prob}, where $\breve{\varphi}^{(0)}$ is determined in \eqref{breve_phi}. If $\breve{\varphi}^{(0)}$ vanishes, then we have the classical  Kirchhoff's junction rule stating that the sum of fluxes flowing into a node is equal to the sum of fluxes leaving that node.

What can be said about the influence of  the functions $\{\varphi^{(i)}\big(\tfrac{\overline{x}_i}{\varepsilon}, x_i, t\big)\}_{i=1}^3$ that are right-hand sides in the boundary conditions on the lateral surfaces of the  thin cylinders in the original problem \eqref{probl}?
Despite the fact that they  are multiplied by the small parameter $\varepsilon,$ their influence is observed in the corresponding
one-dimensional balance equation
\begin{equation}\label{6_3}
  h^2_i(x_i)\, \partial_t{w}^{(i)}_0 + \partial_{x_i}\big( v_i^{(i)}(x_i)\,  h^2_i(x_i)\, w^{(i)}_0 \big) = - 2 h_i(x_i)\,  \widehat{\varphi}^{(i)}(x_i, t)
\end{equation}
in the limit problem \eqref{limit_prob}, and the function $\widehat{\varphi}^{(i)},$ determined in \eqref{hat_phi},  arises from the diffusion flow and convection flow across the lateral surface of the thin cylinder $\Omega^{(i)}_\varepsilon.$
Balance equations of this kind arise in many areas of natural science, typically as equations governing      momentum or
energy. 

Also in \eqref{6_3}, we observe the function $h_i$, which describes the aperture  of the thin cylinder $\Omega^{(i)}_\varepsilon,$ and the function $v_i^{(i)}$, which is the longitudinal component of the velocity field $\overrightarrow{V_\varepsilon} ^{(i)}$ in this cylinder.
The direct influence of the diffusion matrix and non-longitudinal components of $\overrightarrow{V_\varepsilon} ^{(i)}$  reveals itself in the definition of the next terms of the regular asymptotics, namely, the coefficient $u_1^{(i)}$ (see \eqref{eq_1} and \eqref{bc_1}) and coefficient
$w_1^{(i)}$ (see \eqref{prob_w_1}).

Thus, our  asymptotic approach is a very general and unified algorithm for finding the asymptotic approximation with prescribed  accuracy for solutions to convection-dominated problems in thin graph-like junctions. The mixed-dimensional ansatz allows us to  account for various factors in statements of boundary-value problems on graphs.

\medskip

{\bf 2.}  The second case  of one inlet and two outlets $(v^{(1)}_1 < 0,\  v^{(2)}_2 >  0, \  v^{(3)}_3 > 0 )$ in Section \ref{par_222} is particularly interesting because the Kirchhoff conditions do not suffice to ensure wellposedness. Our choice of the boundary condition \eqref{cont_condition}  is justified as follows. If the function $\breve{\varphi}^{(0)}$ is equal to zero, then thanks to~\eqref{cond_1} we get the continuity condition
\begin{equation}\label{6_4}
w^{(1)}_0\big|_{x_1=0} = w^{(2)}_0\big|_{x_2=0} = w^{(3)}_0\big|_{x_3=0}
\end{equation}
which is lost in the limit for convection-dominated problems in thin graph-like junctions. If $\breve{\varphi}^{(0)} \neq 0$ the continuity condition is not satisfied, just as in the first case  $(v^{(1)}_1 < 0,\  v^{(2)}_2 <  0, \  v^{(3)}_3 > 0 ).$
From a physical point of view, this fact can be explained as the duality of the description of phenomena using first-order hyperbolic equations: this is the propagation of waves on the one hand, and the movement of classical particles on the other hand (wave-particle duality).

Therefore, the presence of the coefficient $N_0$ in the zero-order approximation \eqref{zero_app} is very important, since it neutralizes this dualism and ensures the continuity of the approximation in a vicinity of the node. In the second case and if ${\varphi}^{(0)} = 0$ and
\eqref{6_4} hold, it follows from \eqref{N_0_prob} that $N_0 = w^{(1)}_0(0,t)$ and
the zeroth-order approximation is significantly simplified, respectively
(see \eqref{zero_app} and plug in $N_0 = w^{(1)}_0(0,t)$).

The coefficients $\{N_k\}$ of the node-layer asymptotics are in fact analogues of the boundary-layer.
This is in full agreement with the asymptotic theory of boundary value problems with a small parameter at higher derivatives (in the limit, such problems become problems of a lower order). Solutions to such problems have local features: they are  bounded, but in some part of the region (usually near the boundary) their derivatives are very large. In our case, the solution has such local singularities both near the bases of some thin cylinders and in a neighborhood of the node. The difference is that $\{N_k\}$  have polynomial growth at infinity (see \eqref{N_0_prob} and \eqref{N_k_prob}), and because of this we had to introduce special cut-off functions to cancel out this growth.

Moreover, the choice of the boundary condition
$w_k^{(2)}\big|_{x_2 =0} =  w_k^{(1)}(0,t)$ for the other terms in the second case reduces the absolute value of the partial derivatives of  $\{N_k\}$ with respect to the spatial variables (as we saw above, if ${\varphi}^{(0)} = 0,$ then all derivatives of $N_0$ vanish), and as a result we will have minimal constants in the right-hand sides of the asymptotic estimates \eqref{main_1}, \eqref{main_2}, \eqref{main_3} and \eqref{grad_1}.

\medskip

{\bf 3.} This approach of ours can be applied without any changes to convection-dominated problems in thin graph-like junctions that have $n$ incoming thin cylinders with respect to the velocity field $\overrightarrow{V_\varepsilon}$ and one outgoing or one incoming cylinder and $n$ outgoing. Such junctions are used, for example, in modeling blood flow in veins or arteries, and transport processes in root systems or tree systems.

In the general case, when $n$ thin cylinders enter the node and $m$ exit, the question is open. As we noted in the introduction, this is also a big problem for conservation laws on networks.  In our opinion, based on our comments in \textbf{2.}, one should choose such boundary conditions for the outgoing solutions $\{w_0^{(i)}\}$ in order to minimize the absolute values of the derivatives of $N_0.$ This question will be the subject of a future article.

We are also interested in studying the influence on the solution asymptotics
by the intensity factors $\varepsilon^\alpha,$ $\varepsilon^\beta,$ and $\varepsilon^\delta$. These are  present both in the velocity field
$$
\overrightarrow{V_\varepsilon}^{(1)} = \left(v^{(1)}_1(x_1), \ \varepsilon^\alpha\,  v^{(1)}_2(x_1, \tfrac{\overline{x}_1}{\varepsilon}), \
 \varepsilon^\alpha \, v^{(1)}_3(x_1,\tfrac{\overline{x}_1}{\varepsilon})
\right)
$$
and in the boundary conditions
$$
\big(-  \varepsilon \,  \mathbb{D}_\varepsilon\nabla_x u_\varepsilon +  u_\varepsilon \, \overrightarrow{V}_\varepsilon\big) \cdot \boldsymbol{\nu}_\varepsilon  =  \varepsilon^\beta \varphi^{(i)}_\varepsilon \quad
   \text{on} \ \  \Gamma^{(i)}_\varepsilon, \qquad
\big(-  \varepsilon \,  \mathbb{D}_\varepsilon\nabla_x u_\varepsilon\big) \cdot \boldsymbol{\nu}_\varepsilon  = \varepsilon^\delta \varphi^{(0)}_\varepsilon \quad  \text{on} \ \Gamma^{(0)}_\varepsilon.
$$
It follows from our results that if $\alpha >1,$ $\beta >1,$ and $\delta > 0,$ then the zero-order approximation will be equal to \eqref{zero_app}.
But what happens if $\alpha < 1$ or $\beta < 1$ or $\delta < 0,$ is  still unknown. For other boundary-value problems, such studies were carried out in \cite{Mel_Kle_2019,M-AA-2021}.

In our studies, we significantly used the assumption that the vector field $\overrightarrow{V_\varepsilon}$ is conservative at the node. It  would be interesting to obtain some results without this assumption.

\section*{Acknowledgements}
The first author is grateful to the Alexander von Humboldt Foundation for the possibility to carry
out this research at the University of Stuttgart. The second author thanks for  funding by the  Deutsche Forschungsgemeinschaft (DFG, German
Research Foundation) – Project Number 327154368 – SFB 1313.

\end{document}